\DeclareMathAlphabet{\mymathbb}{U}{bbold}{m}{n}
\newtheorem{theorem}{Theorem}[section]
\newtheorem{prop}[theorem]{Proposition}
\newtheorem{lemma}[theorem]{Lemma}     
\newtheorem{fact}[theorem]{Fact}
\newtheorem{coro}[theorem]{Corollary}
\theoremstyle{definition}
\newtheorem{example}[theorem]{Example}
\newtheorem{remark}[theorem]{Remark}
\newcommand{\ts}{\hspace{0.5pt}}
\newcommand{\nts}{\hspace{-0.5pt}}
\newcommand{\RR}{\mathbb{R}\ts}
\newcommand{\JJ}{\mathbb{J}}
\newcommand{\ZZ}{\mathbb{Z}}
\newcommand{\NN}{\mathbb{N}}
\newcommand{\PP}{\mathbb{P}}
\newcommand{\cA}{\mathcal{A}}
\newcommand{\cC}{\mathcal{C}}
\newcommand{\cE}{\mathcal{E}}
\newcommand{\cF}{\mathcal{F}}
\newcommand{\cM}{\mathcal{M}}
\newcommand{\cO}{\mathcal{O}}
\newcommand{\mon}{\cM_{d,\preccurlyeq}}
\newcommand{\cmon}{\cC_{d,\preccurlyeq}}
\newcommand{\drei}{\cA^{_{(3)}}_{\, 0}}
\newcommand{\emb}{\cM^{\mathrm{E}}_{d}}
\newcommand{\ee}{\ts\mathrm{e}}
\newcommand{\dd}{\ts\mathrm{d}}
\newcommand{\bs}{\boldsymbol}
\newcommand{\one}{\mymathbb{1}}
\newcommand{\nix}{\mymathbb{0}}
\newcommand{\Mat}{\mathrm{Mat}}
\newcommand{\Co}{\mathrm{cent}}
\newcommand{\alg}{\mathrm{alg}}
\newcommand{\tr}{\mathrm{tr}}
\newcommand{\sgn}{\mathrm{sgn}}
\newcommand{\exend}{\hfill$\Diamond$}
\newcommand{\defeq}{\mathrel{\mathop:}=}
\newcommand{\myfrac}[2]{\frac{\raisebox{-2pt}{$#1$}}
      {\raisebox{0.5pt}{$#2$}}}
\begin{document}

\title{On equal-input and monotone Markov matrices}

\author{Michael Baake}
\address{Fakult\"at f\"ur Mathematik, Universit\"at Bielefeld, \newline
       \indent  Postfach 100131, 33501 Bielefeld, Germany}

\author{Jeremy Sumner}
\address{School of Natural Sciences, Discipline of Mathematics,
         University of Tasmania,
    \newline \indent Private Bag 37, Hobart, TAS 7001, Australia}

\begin{abstract} 
  The {practically important} classes of equal-input and of monotone
  Markov matrices are revisited, with special focus on embeddability,
  infinite divisibility, and mutual relations. Several uniqueness
  results for the {classic Markov} embedding problem are obtained in
  the process.  {To achieve our results, we need to} employ various
  algebraic and geometric tools, including commutativity, permutation
  invariance and convexity. Of particular relevance in several
  demarcation results are Markov matrices that are idempotents.
\end{abstract}

\keywords{Markov matrix, embedding problem, monotone partial order,
   convexity}
\subjclass[2010]{60J10}

\maketitle

\section{Introduction}

Let $\cM_d$ denote the set of $d$-dimensional \emph{Markov} (or
stochastic) matrices, which are the elements of $\Mat (d,\RR)$ with
non-negative entries and all row sums equal to $1$.  Clearly, $\cM_d$
is a compact convex set, with the $d^{\ts d}$ Markov matrices with
entries in $\{0,1\}$ being its extremal points (or elements); see
\cite[Sec.~II.1]{MM} for a summary. Another classic example is the
subset of $\cM_d$ of \emph{doubly stochastic} matrices, where both the
matrix and its transpose are Markov. Here, the extremal elements are
the $d\ts !$ permutation matrices, which is known as Birkhoff's
theorem.  Concepts and methods from convexity will be {needed and}
employed throughout the manuscript; see \cite{Web} for general
background on convex structures.  \smallskip

A Markov matrix $M$ is called \emph{embeddable}
\cite{Elfving,King,Davies,CFR} if it can be written as $M=\ee^Q$ with
a \emph{rate matrix} $Q$, which is a matrix with non-negative
off-diagonal elements and vanishing row sums. A rate matrix $Q$ is
also called a \emph{Markov generator}, or simply \emph{generator},
because $\{ \ee^{t Q} : t \geqslant 0 \}$ is a semigroup of Markov
matrices with unit, and is thus a monoid; see \cite{Norris} for
general background on Markov chains in continuous time.  The set of
embeddable matrices from $\cM_d$ is denoted by $\emb$.  A Markov
matrix is called \emph{infinitely divisible} \cite{Feller,GMMS} if it
has a Markov $n$-th root for every $n\in\NN$, where $\NN$ is the set
of positive integers. It is a well-known fact \cite[Prop.~7]{King}
that a Markov matrix is embeddable if and only if it is non-singular
and infinitely divisible; see \cite[Sec.~2.3]{Higham} as well as
\cite{Guerry,ER} for related material. In the other direction of
taking powers, whenever $Q$ is a generator,
$M^{}_{\infty}\defeq \lim^{}_{t\to\infty} \ee^{t Q}$ exists, and is an
idempotent Markov matrix by \cite[Prop.~2.3{\ts}(3)]{BS}, so
$M^{2}_{\infty} = M^{}_{\infty}$.

The embedding problem goes back to Elfving \cite{Elfving} and became
prominent through the foundational paper by Kingman \cite{King}. In
the two decades following it, many abstract characterisations of
embeddable matrices were found and investigated; see \cite{Davies,BS}
and references cited there for some of the literature, as well as
\cite{HJ,Higham} for various related questions in matrix
analysis. However, beyond $d=3$, concrete criteria suitable for
real-world applications remained elusive, and the interest in the
problem diminished somewhat. New impetus then came from {theoretical
  economics and, more recently and quite vigorously, from}
bioinformatics, where precisely the embedding problem for $d=4$ is
relevant {in phylogenetics}, with significant progress {still
  being rather fresh}; see {\cite{BS,GMMS,CFR}} and references
therein.

In this context, the inference problem of discretely-observed,
continuous-time Markov chains is the starting point, compare
\cite{YR}, where natural consistency considerations led to several new
results; see \cite{Jeremy} and references therein.  Of particular
relevance, both theoretically and practically, are equal-input
matrices \cite{Steel}, which possess a powerful algebraic structure
and paved the way to some progress, also beyond this class.  In fact,
it is precisely the systematic use of some standard (and some perhaps
not quite so standard) tools from (linear) algebra that unlocks the
somewhat stuck embedding problem for progress beyond $d=2$ as needed
in the applications. It is one goal of this paper to explain some of
these techniques in action, by applying them to two particularly
important classes of Markov chains. This way, we also attempt to
convince the reader that stepping a little into algebraic methods can
be more than a little profitable. \smallskip

To avoid trivial statements, we will generally assume $d\geqslant 2$.
Let $C \in \Mat (d,\RR)$ have equal rows, each being
$(c^{}_{1}, \ldots , c^{}_{d})$, and define
$c=c^{}_{1} \nts + \ldots + c^{}_{d}$ as its \emph{parameter sum}.
Such a matrix $C$ is Markov precisely when $c=1$ together with
$c_i\geqslant 0$ for all $i$. However, it then has rank~$1$, so
$\det(C)=0$ for $d>1$. For this reason, such matrices $C$ are often of
limited interest, for instance in the context of
embeddability. Instead, consider $M^{}_{C} = (1-c) \one + C$, which is
a matrix with each row summing to $1$. Here and below, $\one$ denotes
the identity matrix. Cleary, $M^{}_{C}$ is Markov if and only if
$c_i\geqslant 0$ and $c \leqslant 1 + c_i$ for all $i$.  Following the
much-cited monograph \cite{Steel}, we call such Markov matrices
\emph{equal-input}, since they describe Markov chains where the
probability of a transition $i \to j$, for $i\ne j$, depends on $j$
only.  As detailed in \cite{Steel}, see also \cite{BS} and references
therein, they constitute an important model class in bioinformatics
and phylogenetics.  All such matrices are of the above form, and the
underlying $c$ is called its \emph{summatory parameter}.  For a given
$d$, they form another convex set, which we denote by $\cC_d$; see
Lemma~\ref{lem:convex-1} for more.

A seemingly unrelated concept, at least at first sight, is the
following.  Consider the standard simplex of $d$-dimensional
probability vectors,
\[
  \PP_d \, \defeq \, \{ (x^{}_{1}, \ldots , x^{}_{\nts d}) : \text{all
  } x_i \geqslant 0 \text{ and } x^{}_{1} \nts + \ldots + x^{}_{d} = 1
  \} \ts ,
\]
which is a compact convex set. Its extremal elements are the standard
(row) basis vectors $e_i$ with
$i \in [d\ts ] \defeq \{ 1, \ldots, d \}$. One can introduce the
partial order of \emph{stochastic monotonicity} on $\PP_d$ by saying
that $x$ is \emph{dominated} by $y$, written as $x\preccurlyeq y$,
when $\sum_{i=m}^{d} x_i \leqslant \sum_{i=m}^{d} y_i$ holds for all
$m \in [d\ts ]$; see Eq.~\eqref{eq:stoch-ord} below for an alternative
formulation. The corresponding partial order is well defined also on
the positive multiples of $\PP_d$, called \emph{level sets}, hence
extendable to convex combinations. Further, it is consistent on the
entire positive cone, where two vectors can at most be compared when
they lie in the same level set. This notion has its origin in an
important class of stochastic processes \cite{Daley} that show up in
many places in probability theory and its applications
\cite{KK,Lind,Ki}. Though practically perhaps most relevant in various
economic contexts, stochastic monotonicity induces another class of
matrices with a lot of internal structure, which is relevant in the
embedding context, as we shall explain below.
\smallskip

A Markov matrix $M = (m^{}_{ij})^{}_{1 \leqslant i,j \leqslant d}$ is
called \emph{stochastically monotone}, or \emph{monotone} for short,
when the mapping $x\mapsto x M$ preserves stochastic monotonicity;
compare \cite{Daley,KK}.  It is well-known that $M$ is monotone if and
only if its rows
$m^{}_i = (m^{}_{i1}, \ldots, m^{}_{id}) = e^{}_{i}\ts M$ are ordered
accordingly with increasing row numbers, meaning
$m^{}_i \preccurlyeq m^{}_j$ for all $i\leqslant j$. More generally,
the concept of being monotone is well defined for all non-negative
matrices with equal row sums, which simply are multiples of Markov
matrices. Then, preserving the partial order means that an inequality
in one level set is turned into the corresponding one in another.  The
monotone Markov matrices in $\cM_d$ form a closed convex set, which we
denote by $\mon$.  All elements of $\mon$ have trace $\geqslant 1$,
and the extremal points of $\mon$, as detailed in
Lemma~\ref{lem:convex} below, are the $\binom{2d-1}{d}$ monotone
Markov matrices with entries in $\{0,1\}$. Monotone Markov matrices
appear in many contexts; see \cite{KK,Lind} as well as
\cite[Ch.~3]{Ki} for examples.

A \emph{stationary vector} of $M\in\cM_d$ is any $x\in\PP_d$ with
$x M \nts = x$. Given $M\nts $, the set of all stationary vectors is
convex.  In fact, it is a subsimplex of $\PP_d$ that can be a
singleton set (as for all irreducible $M$) or larger, up to $\PP_d$
itself (for $M\nts =\one$). A Markov matrix $M$ is an
\emph{idempotent} when $M^2=M$. This means that $M$ maps any
$x\in\PP_d$ to a stationary vector in one step. Except for
$M\nts =\one$, any idempotent in $\cM_d$ has minimal polynomial
$x (x-1)$.  In particular, all idempotents in $\cM_d$ are
diagonalisable, {and all but $M=\one$ are singular.}  Markov
idempotents constitute an interesting subset of the Markov matrices in
their own right, {compare \cite[Sec.~1.6]{HM},} but also provide a
natural link between equal-input and monotone matrices. Indeed, the
above two matrix classes are intimately connected, and several of
their properties can be derived and understood interactively. In our
presentation below, idempotents will play an important role, which is
an interesting point that does not seem to have attracted {enough}
attention in the past.  \smallskip

The structure of the paper is organised as follows.  Since we build on
terminology, methods and results from \cite{BS}, we feel that a
renewed section on preliminaries is unnecessary.  While we try to make
the paper as self-contained as possible, some reference to \cite{BS}
is inevitable to avoid duplication.  Instead, we proceed by recalling
and extending some important properties of equal-input matrices in
Section~\ref{sec:ei} and then derive new results on their graded
semigroup structure (Proposition~\ref{prop:grading}), which provides
relevant insight into the properties of such Markov matrices. Then, we
determine their embeddability (Proposition~\ref{prop:ei-embed}) and
their multiplicative structure in exponential form
(Theorem~\ref{thm:BCH}).

The latter can be viewed as a non-trivial, explicit version of the
Baker--Campbell--Hausdorff (BCH) formula in this case; see the
\textsc{WikipidiA} entry on the BCH formula for a good summary of this
tool, which is particularly useful in the context of inhomogeneous
Markov chains.  Beyond the commuting case, where the BCH formula is
trivial, we are not aware of many other matrix classes with such a
favourable structure, and a better understanding of multiplicatively
closed matrix classes is needed; compare \cite{J1, J2} as well as
\cite[Ch.~7]{Steel}.

Then, we turn to the monotone matrices in Section~\ref{sec:monotone},
where we first recall some of their elementary properties and then
continue with results on embeddability (Theorem~\ref{thm:monotone-2})
and monotone generators (Proposition~\ref{prop:mon-gen}). Throughout
the discussion, idempotent matrices will naturally appear, which can
be explained by the intrinsic (pseudo{\ts}-)Poissonian structure of
infinitely divisible Markov matrices (Proposition~\ref{prop:Pois} and
Theorem~\ref{thm:divisible}).  We consider the case $d=3$ in more
detail in Section~\ref{sec:three}, where the embeddability can be
decided completely (Proposition~\ref{prop:mon-embed} and
Theorem~\ref{thm:drei}), and close with a general uniqueness result
(Theorem~\ref{thm:unique} and Corollary~\ref{coro:unique}) and some
comments on Markov roots and limiting cases in
Section~\ref{sec:unique}.

\section{Equal-input matrices and some of their
  properties}\label{sec:ei}

From now on, we always use $C$ to denote a non-negative matrix with
equal rows and parameter sum $c\geqslant 0$, where $c=0$ then implies
$C=\nix$. For any matrix $C$ of this type, one has $MC = C$ for all
$M\in\cM_d$.

\subsection{Equal-input Markov matrices}
Given such a matrix $C$, the corresponding matrix 
\begin{equation}\label{eq:equal-input}
     M^{}_{C} \, \defeq \,  (1-c) \ts \one + C 
\end{equation}
is Markov precisely when $0 \leqslant c^{}_{i}$ for all $i$ together
with $c\leqslant 1 + \min^{}_{\ts i} c^{}_i$.  It is then called an
\emph{equal-input} Markov matrix, and the set of all such matrices for
a fixed $d$ is denoted by $\cC_d$.  As $C\ne \nix$ has eigenvalues $c$
and $0$, the latter with multiplicity $d \nts - \! 1$, it is clear
that
\begin{equation}\label{eq:det-MC}
    \det (M^{}_{C}) \, = \, (1\nts -c)^{d-1} \ts .
\end{equation}
A Markov generator $Q$ is called \emph{equal-input} if it is of the
form $Q=Q^{}_{C}=C - c \ts\ts \one$, with $C$ as above and
$c\geqslant 0$ without further restrictions. Clearly, $c=0$ means
$Q=\nix$. Also here, we call $c$ the \emph{summatory parameter}, as it
will always be clear from the context whether we refer to a Markov
matrix or to a generator. Since $C^2 = c \ts\ts C$, one gets
$Q^{2}_{C} = -c \ts\ts Q^{}_{C}$, so any equal-input generator is
diagonalisable. When $c>0$, the matrix $\frac{1}{c} \ts C$ is both
Markov and an idempotent.  As we shall see, the relation
$Q^{}_{C} = M^{}_{C} - \one$ will become particularly important.

\begin{fact}\label{fact:idem-1}
  If\/ $M\in \cC_d$, with\/ $d\geqslant 2$, its summatory parameter
  satisfies\/ $c \in \bigl[ 0, \frac{d}{d-1}\bigr]$.  Moreover,
  $M\in\cC_d$ is an idempotent if and only if\/ $c=1$ or\/ $c=0$,
  where the latter case means\/ $M\nts =\one$, while all other
  idempotents are singular.
\end{fact}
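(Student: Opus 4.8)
The plan is to treat the two assertions separately; each follows directly from the defining inequalities for $\cC_d$ together with the algebraic identities already recorded for $C$ and $Q^{}_{C}$.

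First, for the range of the summatory parameter, membership $M^{}_{C}\in\cC_d$ forces $c^{}_i\geqslant 0$ for all $i$, whence $c=c^{}_1+\dots+c^{}_d\geqslant 0$, and it also forces $c\leqslant 1+\min_i c^{}_i$. Since $\min_i c^{}_i$ cannot exceed the average $c/d$, substituting gives $c\leqslant 1+c/d$, i.e.\ $c\,\frac{d-1}{d}\leqslant 1$ and hence $c\leqslant\frac{d}{d-1}$ (here $d\geqslant2$ is used so that $d-1>0$). This yields $c\in\bigl[0,\frac{d}{d-1}\bigr]$, with the right endpoint attained exactly when all $c^{}_i$ coincide.

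Second, for the idempotent characterisation I would write $M^{}_{C}=\one+Q^{}_{C}$ and use $Q^{2}_{C}=-c\,Q^{}_{C}$, which is a restatement of $C^2=c\,C$. Expanding, $M^{2}_{C}=\one+2Q^{}_{C}+Q^{2}_{C}=\one+(2-c)\,Q^{}_{C}$, so $M^{2}_{C}=M^{}_{C}$ is equivalent to $(1-c)\,Q^{}_{C}=\nix$. Hence $M^{}_{C}$ is idempotent precisely when $c=1$ or $Q^{}_{C}=\nix$, i.e.\ $C=c\,\one$; for $d\geqslant2$ the off-diagonal entries of $c\,\one$ vanish while the rows of $C$ all equal $(c^{}_1,\dots,c^{}_d)$, which forces every $c^{}_j=0$ and hence $c=0$, in which case $M^{}_{C}=\one$. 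Finally, by~\eqref{eq:det-MC} an idempotent with $c=1$ has determinant $(1-c)^{d-1}=0$ and is therefore singular, whereas $c=0$ returns the non-singular matrix $\one$; this gives the last clause.

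I do not anticipate a genuine obstacle here, since the argument is short and elementary. The only place calling for slight care is the implication $C=c\,\one\Rightarrow c=0$ for $d\geqslant2$: it uses that the equal-rows structure of $C$ is incompatible with a nonzero diagonal shape, which is precisely the point that fails at $d=1$ and explains the standing hypothesis $d\geqslant2$.
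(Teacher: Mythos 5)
Your proof is correct and follows essentially the same route as the paper: the bound $c\leqslant\frac{d}{d-1}$ comes from $\min_i c_i\leqslant c/d$ combined with $c\leqslant 1+\min_i c_i$, and the idempotency dichotomy comes from squaring $M_C$ using $C^2=c\,C$ (your $Q_C^2=-c\,Q_C$ is the same identity) and noting that an equal-rows matrix can equal $c\,\one$ only if it vanishes when $d\geqslant 2$. The only nitpick is the aside that the right endpoint is attained ``exactly when'' all $c_i$ coincide --- equality of the $c_i$ is necessary but not sufficient for $c=\frac{d}{d-1}$ --- but this plays no role in the argument.
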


\begin{proof}
  Assume that $M = M^{}_{C}$ is Markov.  Since
  $0 \leqslant c = c^{}_{1} \nts + \ldots + c^{}_{d} \leqslant 1+
  \min^{}_i c^{}_i$, the maximal value of $c$ is realised for
  $c^{}_{1} = \ldots = c^{}_{d} = \frac{c}{d}$, which gives the first
  claim. The second follows from considering the equation
\[
    (1-c) \ts \one + C \, = \, M^{}_{C} \, = \, M^{\ts 2}_{C} \, = \,
    (1-c)^2 \ts  \one + (2-c) \ts C \ts ,
\]    
which is solved by $C=\nix$ with $c=0$ or, for any $c>0$, implies
$(1-c) \ts C = \nix$ and hence $c=1$.  This leads to the two cases
stated, where $\one$ is the only non-singular idempotent by
\eqref{eq:det-MC}.
\end{proof}

Let us next look at some asymptotic properties. Here, one has
\[
  M^{\ts n}_{C} \,= \, (1-c)^{n} \ts \one +
      \frac{1 - (1-c)^{n}}{c} \, C
\]
for $n\in \NN_0$ and $c>0$ (so $C\ne \nix$). If
$\lvert 1-c \ts \rvert < 1$, we see that $M^{\ts n}_{C}$, as
$n\to\infty$, converges to the Markov matrix $\frac{1}{c} \ts
C$. Adding the case with $c=0$, but excluding $c=2$ from the
consideration (where convergence fails, occurring only for $d=2$), one
can summarise as follows.

\begin{fact}\label{fact:limit}
  Let\/ $d\geqslant 2$ and let\/ $C$ be a non-negative matrix with
  equal rows and parameter sum\/ $0 \leqslant c < 2$. Then, if the
  matrix\/ $M^{}_{C}$ from \eqref{eq:equal-input} is Markov, one has
\[
     \lim_{n\to\infty} M^{\ts n}_{C} \, = \, \begin{cases}
      \one , & \text{if\/ $c=0$}, \\  \frac{1}{c} \ts C ,
      & \text{otherwise} , \end{cases}  
\]   
where all limits are idempotents.  Here, the summatory parameter of\/
$M^{\ts n}_{C}$ is\/ $1-(1-c)^n$, which is\/ $0$ for\/ $c=0$, or
otherwise converges to\/ $1$ as\/ $n \to \infty$.  \qed
\end{fact}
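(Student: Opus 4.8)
The plan is to obtain everything from the explicit power formula for equal-input matrices displayed immediately before the statement, namely
\[
   M^{\ts n}_{C} \,=\, (1-c)^n \ts \one + \frac{1-(1-c)^n}{c}\, C ,
\]
which holds for all $n\in\NN_0$ when $c>0$ and follows by a one-line induction from $C^2 = c\ts C$. For the degenerate case $c=0$ there is nothing to do: then $C=\nix$, so $M^{}_{C}=\one$ and $M^{\ts n}_{C}=\one$ for every $n$, the limit $\one$ is trivially idempotent, and the summatory parameter is constantly $1-(1-0)^n=0$.

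So assume $0<c<2$, which is equivalent to $\lvert 1-c\rvert<1$. Then $(1-c)^n\to 0$ as $n\to\infty$, so in the power formula the coefficient of $\one$ vanishes in the limit while the coefficient of $C$ tends to $\frac1c$; hence $M^{\ts n}_{C}\to\frac1c\ts C$ entrywise, as claimed. That this limit matrix is an idempotent is immediate from $C^2 = c\ts C$, which gives $\bigl(\frac1c C\bigr)^2 = \frac{1}{c^{2}}\,C^2 = \frac1c C$; it was also noted after Fact~\ref{fact:idem-1} that $\frac1c C$ is a Markov idempotent whenever $c>0$.

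For the assertion on summatory parameters, I would rewrite the power formula in the equal-input normal form \eqref{eq:equal-input}: the matrix $\frac{1-(1-c)^n}{c}\,C$ has equal rows with parameter sum $\frac{1-(1-c)^n}{c}\cdot c = 1-(1-c)^n$, and $(1-c)^n = 1-\bigl(1-(1-c)^n\bigr)$, so $M^{\ts n}_{C}$ is equal-input with summatory parameter $c^{}_n = 1-(1-c)^n$; this is $0$ when $c=0$ and converges to $1$ otherwise, once more because $(1-c)^n\to 0$. There is essentially no obstacle here beyond bookkeeping; the only subtlety is why the hypothesis reads $c<2$ rather than the a priori bound $c\leqslant\frac{d}{d-1}$ from Fact~\ref{fact:idem-1}. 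The two bounds differ only when $d=2$, where $\frac{d}{d-1}=2$, and the excluded value $c=2$ is genuinely bad: then $(1-c)^n=(-1)^n$ does not converge, and $M^{}_{C}$ is in that case the $2\times 2$ transposition matrix, whose powers oscillate.
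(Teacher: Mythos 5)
Your argument is correct and coincides with the paper's: the Fact is stated with $\qed$ precisely because it follows from the displayed power formula $M^{\ts n}_{C} = (1-c)^n\ts\one + \frac{1-(1-c)^n}{c}\,C$ together with $\lvert 1-c\rvert<1$, the idempotency $C^2=c\ts C$, and the exclusion of the oscillating case $c=2$ (possible only for $d=2$), all of which you reproduce. Nothing is missing.
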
 

Since idempotents will show up repeatedly below, we recall the
following well-known property of Markov matrices, {which we also
prove for the reader's convenience.}

\begin{lemma}\label{lem:idem}
   For\/ $M\in\cM_d$, the following properties are equivalent.
\begin{enumerate}\itemsep=2pt
\item $M$ is a non-singular idempotent.
\item $1$ is the only eigenvalue of\/ $M$.
\item $M=\one$.
\item $M$ has minimal polynomial\/ $q(x) = x-1$.
\end{enumerate}   
\end{lemma}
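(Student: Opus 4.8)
The plan is to prove the four equivalences by a short cycle of implications, say $(1)\Rightarrow(2)\Rightarrow(3)\Rightarrow(4)\Rightarrow(1)$, using only elementary linear algebra together with the defining constraint that row sums equal $1$.

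First I would establish $(1)\Rightarrow(2)$. If $M$ is an idempotent, then $M(M-\one)=\nix$, so the minimal polynomial of $M$ divides $x(x-1)$; hence every eigenvalue of $M$ lies in $\{0,1\}$. Since $M$ is assumed non-singular, $0$ is excluded, so $1$ is the only eigenvalue. Next, $(2)\Rightarrow(3)$ is the one place where the Markov structure is essential and is, I expect, the main point of substance. Knowing only that $1$ is the unique eigenvalue does not by itself force $M=\one$ (a non-diagonalisable matrix could have a Jordan block). The cleanest route is to recall that any $M\in\cM_d$ is a stochastic matrix, so $\one$ is always a right eigenvector for eigenvalue $1$, and the spectral radius of $M$ equals $1$; more to the point, for a Markov matrix the eigenvalue $1$ is \emph{semisimple} — its algebraic and geometric multiplicities coincide — because $M$ restricted to any cyclic/essential class is a stochastic matrix whose Perron eigenvalue $1$ is simple on that class, and the block-triangular form over communicating classes has $1$ semisimple. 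Consequently, if $1$ is the \emph{only} eigenvalue and it is semisimple, then $M$ is diagonalisable with all eigenvalues $1$, i.e. $M=\one$. (Equivalently: write $M = \one + N$; then $N$ has all eigenvalues $0$, so $N$ is nilpotent; but $\|M^k\|$ stays bounded as $k\to\infty$ since $M^k$ is again Markov, which forces $N=\nix$.) I would present the nilpotent-plus-boundedness argument, as it is the most self-contained.

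Then $(3)\Rightarrow(4)$ is immediate: the minimal polynomial of $\one$ is $x-1$. Finally $(4)\Rightarrow(1)$: if the minimal polynomial is $q(x)=x-1$, then $M-\one=\nix$, so $M=\one$, which is trivially a non-singular idempotent; alternatively one notes directly that $q$ having the single root $1$ makes $M$ invertible and $q(x)\mid x^2-x$ would give idempotency, but since $q(x)=x-1$ forces $M=\one$ there is nothing further to check.

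The main obstacle is purely the $(2)\Rightarrow(3)$ step, i.e. upgrading ``only eigenvalue is $1$'' to ``$M=\one$'' — everything else is one-line. The safest and shortest justification uses that powers of a Markov matrix are again Markov and hence uniformly bounded, so the nilpotent part $N=M-\one$ must vanish; this avoids any case analysis over communicating classes and needs no machinery beyond the elementary observation that $\cM_d$ is closed under multiplication and bounded. I would also remark that this lemma is the specialisation to Markov matrices of the general fact that a power-bounded matrix with spectrum $\{1\}$ is the identity, and note that it re-proves, and is consistent with, the last sentence before Lemma~\ref{lem:idem} (idempotents $\neq \one$ are necessarily singular, with minimal polynomial $x(x-1)$).
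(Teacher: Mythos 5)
Your proposal is correct, and the implication structure is essentially the one the paper uses; the only substantive divergence is in the key step $(2)\Rightarrow(3)$. There, the paper simply invokes a classical theorem (Gantmacher, Thm.~13.10) asserting that for a stochastic matrix the algebraic and geometric multiplicities of the eigenvalue $1$ coincide, so that ``spectrum $=\{1\}$'' immediately forces $M=\one$. You instead give a self-contained argument: writing $M=\one+N$ with $N$ nilpotent, the binomial expansion $M^k=\sum_j\binom{k}{j}N^j$ would be unbounded in $k$ if $N\ne\nix$ (the top nonvanishing power of $N$ dominates), contradicting the fact that every $M^k$ is again Markov and hence has entries in $[\ts 0,1]$. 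This is a valid and genuinely more elementary route: it buys independence from the Perron--Frobenius/semisimplicity machinery at the cost of a few extra lines, and it isolates exactly where the Markov hypothesis enters (power-boundedness). Your alternative sketch of the semisimplicity argument via communicating classes is the one piece that is a bit loose as stated, but since you explicitly elect to present the power-boundedness argument instead, the proof as proposed is complete. The remaining implications $(1)\Rightarrow(2)$, $(3)\Rightarrow(4)$ and $(4)\Rightarrow(1)$ match the paper's one-line treatments.
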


\begin{proof}
  Clearly, $M$ has $1$ as an eigenvalue, because it is Markov.  When
  $M^2=M$, the only possible other eigenvalue is $0$, and (1)
  $\Rightarrow$ (2) is clear. The implications (3) $\Rightarrow$ (4)
  $\Rightarrow$ (1) are immediate, and it remains to show (2)
  $\Rightarrow$ (3).
  
  By \cite[Thm.~13.10]{Gant}, we know that the algebraic multiplicity
  of the eigenvalue $1$ agrees with the geometric one.  When no other
  eigenvalue exists, this means $M=\one$.
\end{proof}

The set $\cC_d$ of equal-input matrices is important in many
applications, see \cite{Steel,BS} and references therein, and has
interesting {and revealing} algebraic properties as follows.

\begin{fact}\label{fact:ci-rule}
  Let\/ $C$ and\/ $C{\ts}'$ be two non-negative, equal-row matrices,
  with parameter sums\/ $c$ and\/ $c{\ts}'$, such that\/ $M_C$ and\/
  $M_{C{\ts}'}$ are Markov matrices, so both lie in\/ $\cC_d$.  Then,
  one also gets\/ $M = M_C \ts M_{C{\ts}'} \in \cC_d$, where one has\/
  $M=M_{C{\ts}''}$ with\/ $C{\ts}'' = (1-c{\ts}' ) \ts C + C{\ts}'$
  and parameter sum\/ $c{\ts}'' = c + c{\ts}' - c \ts\ts c{\ts}'$.
  \qed
\end{fact}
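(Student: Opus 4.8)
The plan is to compute the product $M_C M_{C'}$ directly and read off that it has equal-input shape. The one ingredient I would invoke is the identity recalled at the start of this section: for any non-negative equal-row matrix and any Markov matrix multiplying it on the left, the product equals the equal-row matrix again; in particular $M_C\ts C' = C'$, since $M_C$ is Markov. Expanding $M_{C'} = (1-c')\ts\one + C'$ then collapses everything to a single line,
\[
  M_C\ts M_{C'} \,=\, (1-c')\ts M_C + M_C\ts C' \,=\, (1-c')\ts M_C + C'
  \,=\, (1-c)(1-c')\ts\one + (1-c')\ts C + C' \ts .
\]

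Next I would introduce $C'' \defeq (1-c')\ts C + C'$. Its rows are all equal (being a linear combination of two equal-row matrices), and its parameter sum is $c'' = (1-c')\ts c + c' = c + c' - c\ts c'$, whence $1 - c'' = (1-c)(1-c')$. Substituting into the display gives $M_C\ts M_{C'} = (1-c'')\ts\one + C'' = M_{C''}$, which is the asserted form.

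It remains to see that $M = M_{C''}$ actually lies in $\cC_d$, i.e.\ that $C''$ is non-negative (note that $1-c'$ may be negative, so this is not immediate from the formula). But $M$ is a product of Markov matrices, hence Markov, and its off-diagonal entry in column $j$ is exactly the $j$-th parameter of $C''$; being an entry of a Markov matrix it is $\geqslant 0$. Thus $C''$ is non-negative, $M_{C''}$ is an equal-input Markov matrix, and $M\in\cC_d$. There is no genuine obstacle in this argument; the only step worth isolating is the identity $M_C\ts C' = C'$, which is what turns a potential Baker--Campbell--Hausdorff-type computation into the single display above.
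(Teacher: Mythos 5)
Your argument is correct and is exactly the computation the paper intends: it rests on the identity $MC'=C'$ recalled at the start of Section~2, and your closing observation that non-negativity of $C{\ts}''$ follows from $M$ being a product of Markov matrices is precisely the point the authors make in the remark immediately after the Fact. Nothing to add.
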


The Markov property of $M = M_C \ts M_{C{\ts}'}$ implies
$0 \leqslant c^{\ts\prime\prime}_{i} \leqslant c^{\ts\prime\prime}
\leqslant \ts 1 + c^{\ts\prime\prime}_{i} $ for all $i$, which may not
be obvious from the formula for $C{\ts}'' \nts$. In fact, the relation
between the summatory parameters can be further analysed as follows,
where we refer to \cite{Lang} for the grading notion, {where we will
  need the group $C_2$ with two elements, here written as $\{ 1,-1 \}$
  with ordinary multiplication.}

\begin{lemma}\label{lem:ranges}
  Consider\/ $f (a,b) = a + b - a b$ for\/
  $a,b \in \nts X \! \defeq [ \ts 0,1) \cup (1,2 \ts ]$. Then, one of
  the following three cases applies.
\begin{enumerate}\itemsep=2pt
\item If\/ $\max (a,b) < 1$, one has\/
  $\ts 0 \leqslant \max (a,b) \leqslant f (a,b) < 1$.
\item If\/ $\min (a,b) > 1$, one has\/
  $\ts 0 \leqslant 2 - \min (a,b) \leqslant f (a,b) < 1$.
\item Otherwise, one has\/
  $\ts 1 < f (a,b) \leqslant \max (a,b) \leqslant 2 \ts$.
\end{enumerate}
In particular, the mapping\/ $(a,b)\mapsto f(a,b)$ turns\/ $X\nts\nts$
into a\/ $C_2$-graded, commutative monoid, with\/ $0$ as the neutral
element of $\ts X\!$ and the grading being induced by the two
connected components of $X \!$, for instance via the mapping\/
$x \mapsto \sgn (1\nts -x)$ for\/ $x\in X\! $.
\end{lemma}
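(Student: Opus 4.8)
The plan is to base everything on the single polynomial identity
\[
   1 - f (a,b) \, = \, (1-a)\ts(1-b) \ts ,
\]
together with its iterate $1 - f\bigl( f(a,b),c \bigr) = (1-a)(1-b)(1-c)$. On $X$, the factor $1-x$ is never zero: it lies in $(0,1\ts]$ for $x \in [\ts 0,1)$ and in $[-1,0)$ for $x \in (1,2\ts]$. Hence $\sgn (1-x)$ is well defined on $X$, equal to $+1$ on the first and $-1$ on the second component, with $\lvert 1-x \rvert \leqslant 1$ throughout. Substituting into the identity, $1 - f(a,b)$ is a product of two nonzero numbers of modulus at most $1$, so $\lvert 1 - f(a,b) \rvert \leqslant 1$ and $\sgn\bigl( 1 - f(a,b) \bigr) = \sgn (1-a) \ts \sgn (1-b)$.

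The three cases then follow by inspecting this sign. If $\max (a,b) < 1$, both factors lie in $(0,1\ts]$, so $1 - f(a,b) \in (0,1\ts]$ and $f(a,b) \in [\ts 0,1)$; the same holds when $\min (a,b) > 1$, since then both factors lie in $[-1,0)$; and if one argument is below $1$ and the other above, the factors have opposite signs, so $1 - f(a,b) \in [-1,0)$ and $f(a,b) \in (1,2\ts]$. For the sharper bounds I would use three one-variable factorisations, writing $m = \min(a,b)$ and $M = \max(a,b)$ and exploiting the symmetry of $f$: from $f(a,b) - M = m\ts(1 - M)$ one gets $M \leqslant f(a,b)$ in case (1), as $m \geqslant 0$ and $M < 1$; from $f(a,b) - (2 - m) = (2 - M)(m - 1)$ one gets $2 - m \leqslant f(a,b)$ in case (2), as $M \leqslant 2$ and $m > 1$; and from $M - f(a,b) = m\ts(M - 1)$ one gets $f(a,b) \leqslant M$ in case (3), as $m \geqslant 0$ and $M > 1$. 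Together with the trivial bounds $0 \leqslant M$, $0 \leqslant 2 - m$ and $M \leqslant 2$, this gives the three displayed chains.

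For the monoid statement, commutativity of $f$ is immediate, and $f(a,0) = a$ with $0 \in X$ shows that $0$ is a neutral element. Closure $f(X\times X) \subseteq X$ is exactly what the three cases deliver, since in each case $f(a,b)$ lies in $[\ts 0,2\ts]$ and is strictly away from $1$; associativity is read off the symmetric iterated identity above. Finally, $\varphi\colon x \mapsto \sgn(1-x)$ maps $X$ onto $C_2 = \{\pm 1\}$, and the sign computation of the first paragraph says precisely $\varphi\bigl( f(a,b) \bigr) = \varphi(a)\ts\varphi(b)$, so $\varphi$ is a surjective monoid homomorphism; its fibre over $+1$ is $[\ts 0,1)$, which contains the neutral element $0$, and its fibre over $-1$ is $(1,2\ts]$. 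This is the asserted $C_2$-grading $X = [\ts 0,1) \sqcup (1,2\ts]$.

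I do not expect a genuine obstacle here: once the identity $1 - f(a,b) = (1-a)(1-b)$ is in hand, everything reduces to bookkeeping. The points that still need a little care are the auxiliary one-variable factorisations behind the $\max$/$\min$ bounds and the observation that $f(a,b) \neq 1$ always, which is what makes the closure of $X$ under $f$ and the grading well defined.
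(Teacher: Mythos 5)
Your proof is correct; every identity checks out ($1-f(a,b)=(1-a)(1-b)$, $f-M=m(1-M)$, $f-(2-m)=(2-M)(m-1)$, and the iterated product for associativity), and the case placements and sharp bounds all follow as you claim. The route is genuinely different in organisation from the paper's. The paper fixes $a\leqslant b$ without loss of generality and proves each of the three chains by a direct inequality computation (e.g.\ $0\leqslant b\leqslant b+(1-b)\ts a=f(a,b)<b+(1-b)=1$ in case (1)), using the auxiliary observation that $x\ts(2-x)<1$ on $[\ts 0,1)\cup(1,2\ts]$ to close case (2); associativity is then verified by expanding $f(f(a,b),c)$ symmetrically. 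You instead put the single factorisation $1-f(a,b)=(1-a)(1-b)$ at the centre: it simultaneously yields closure of $X$ under $f$ (since the product of nonzero factors of modulus $\leqslant 1$ is nonzero of modulus $\leqslant 1$, so $f(a,b)\ne 1$ and $f(a,b)\in[\ts 0,2\ts]$), the coarse trichotomy via signs, the homomorphism property of $x\mapsto\sgn(1-x)$, and associativity, with only the sharper $\max/\min$ bounds needing the three auxiliary one-variable factorisations. What your version buys is conceptual economy — in effect you exhibit $x\mapsto 1-x$ as an isomorphism of $(X,f)$ onto a multiplicative submonoid of $[-1,1]\setminus\{0\}$, which makes the $C_2$-grading transparent rather than a by-product of case inspection; what the paper's version buys is that each displayed chain of inequalities in the statement is read off verbatim from a single line of the proof. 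Either argument is complete and both are elementary.
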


\begin{proof}
  Without loss of generality, we may assume $a\leqslant b$. Also,
  observe that the function $x \mapsto x (2-x)$, on $[ \ts 0,2 \ts ]$,
  has a unique maximum at $x=1$, with value $1$, so $x (x-1) < 1$
  holds for all $x\in [\ts 0,1) \cup (1,2 \ts ]$. Now, we can look at
  the three cases as follows.
 
  When $0\leqslant a \leqslant b < 1$, where $1-b$ is positive, we
  obtain the estimate
\[
   0 \, \leqslant \, b \, \leqslant \, b + (1-b) \ts a \, = \,
   f (a,b) \, < \, b + (1-b) \, = \, 1 \ts ,
\] 
while $1 < a \leqslant b \leqslant 2$, where $1-a$ is negative, leads
to
\[
   0 \, \leqslant \ 2-a \, = \, a + 2 (1-a) \, \leqslant \,
   a + b (1-a) \, = \, f (a,b) \, \leqslant \, a + a (1-a) 
   \, = \, a (2-a) \, < \, 1 \ts .
\]  
For the remaining case, it suffices to consider
$0 \leqslant a < 1 < b \leqslant 2$, which gives
\[
   1 \, = \, b + (1-b) \, < \, b + a (1-b) \, = \, f (a,b)
   \, \leqslant \, a + b - a \, = \, b \, \leqslant \, 2 \ts ,
\]  
from which claims $(1) - (3)$ follow.

Since
$f ( f(a,b),c ) = a + b + c - ab - ac - bc + abc = f ( a, f(b,c))$,
associativity of the mapping $(a,b)\mapsto f(a,b)$ is clear, and the
$C_2$-graded monoid structure is now obvious.
\end{proof}

\begin{prop}\label{prop:grading}
  The set\/ $\cC_d$ is a monoid under matrix multiplication, with the
  subset of non-singular elements forming a submonoid. The latter is\/
  $C_2$-graded by\/ $\ts \sgn (1 \nts -c)$, where\/ $c$ is the
  summatory parameter, which matches with the grading of\/ $\nts X$
  from Lemma~\textnormal{\ref{lem:ranges}}.  When\/ $d$ is even, the
  same grading emerges from the sign of the determinant.
\end{prop}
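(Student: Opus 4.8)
The plan is to reduce everything to facts already in hand. First I would note that Fact~\ref{fact:ci-rule} shows $\cC_d$ is closed under multiplication: given $M^{}_C, M^{}_{C'} \in \cC_d$ with summatory parameters $c, c'$, the product is again equal-input with summatory parameter $c'' = c + c' - c\ts c' = f(c,c')$ (abusing notation by extending $f$ to all of $[0, \frac{d}{d-1}]$). The identity $\one = M^{}_{\nix}$ has $c = 0$, which is the neutral element, so $\cC_d$ is a monoid. Associativity is automatic from matrix multiplication (and is also visible in the symmetric formula $f(f(a,b),c) = a+b+c-ab-ac-bc+abc$ from the proof of Lemma~\ref{lem:ranges}). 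By \eqref{eq:det-MC}, $\det(M^{}_C) = (1-c)^{d-1}$, so $M^{}_C$ is non-singular exactly when $c \neq 1$; since $f(c,c') = 1$ forces $(1-c)(1-c') = 0$, i.e. $c = 1$ or $c' = 1$, the non-singular equal-input matrices are closed under multiplication and form a submonoid.

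Next I would pin down the range. By Fact~\ref{fact:idem-1}, any $M \in \cC_d$ has $c \in [0, \frac{d}{d-1}]$; for $d \geqslant 3$ this interval sits inside $[0,2]$, and for $d = 2$ it is exactly $[0,2]$. Restricting to the non-singular submonoid removes $c = 1$, so the summatory parameters live in $X \cap [0, \frac{d}{d-1}]$, which for the grading purposes is all we need: the map $c \mapsto \sgn(1-c)$ takes values in $\{+1, -1\} \cong C_2$, with $c = 0$ mapping to $+1$. Lemma~\ref{lem:ranges} (cases (1)--(3)) shows precisely that $f$ respects this sign: $\sgn(1 - f(c,c')) = \sgn(1-c)\cdot\sgn(1-c')$, since $f(c,c') < 1$ exactly when $c,c'$ are on the same side of $1$ and $f(c,c') > 1$ exactly when they are on opposite sides. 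Hence $c \mapsto \sgn(1-c)$ is a monoid homomorphism onto $C_2$, giving the claimed $C_2$-grading, and it matches the grading of $X$ in Lemma~\ref{lem:ranges} by construction.

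Finally, for the determinant grading when $d$ is even: $\det(M^{}_C) = (1-c)^{d-1}$ with $d - 1$ odd, so $\sgn \det(M^{}_C) = \sgn(1-c)$ on the non-singular submonoid. Since the determinant is multiplicative, this immediately re-expresses the same homomorphism, so the two gradings coincide. (For $d$ odd, $d-1$ is even and $\det(M^{}_C) = (1-c)^{d-1} > 0$ always, so the determinant carries no grading information — consistent with the restriction to even $d$ in the statement.) I do not anticipate a genuine obstacle here; the only point requiring a little care is bookkeeping the domain of $f$: Lemma~\ref{lem:ranges} is stated for $a, b \in X = [0,1) \cup (1,2]$, and one must observe that the summatory parameters of non-singular elements of $\cC_d$ indeed lie in this set (immediate from Fact~\ref{fact:idem-1} for $d \geqslant 3$, and exactly $X$ itself for $d = 2$), and that Fact~\ref{fact:ci-rule} guarantees the product stays in $\cC_d$, so no parameter can escape the admissible range.
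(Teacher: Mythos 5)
Your proposal is correct and follows essentially the same route as the paper: closure and the monoid structure via Fact~\ref{fact:ci-rule}, the sign behaviour of $1-c^{\ts\prime\prime}$ via Lemma~\ref{lem:ranges}, and the determinant comparison via Eq.~\eqref{eq:det-MC} with $d-1$ odd. Your explicit use of the factorisation $1-f(a,b)=(1-a)(1-b)$ to justify closure of the non-singular elements and the multiplicativity of $\sgn(1-c)$ is a slightly cleaner packaging of the same case analysis, but not a different argument.
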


\begin{proof}
  The semigroup property follows from Fact~\ref{fact:ci-rule}, and
  $\one \nts \in \cC_d$ shows that $\cC_d$ is a monoid. The
  non-singular matrices, which include $\one$, are closed under
  multiplication.
   
  The formula for the summatory parameter of a product from
  Fact~\ref{fact:ci-rule}, in conjunction with Lemma~\ref{lem:ranges},
  implies $c^{\ts\prime\prime}\in [\ts 0,1)$ when $c$ and
  $c^{\ts\prime}$ are either both in $[ \ts 0,1)$ or both in
  $\bigl(1, \frac{d}{d-1}\bigr]$, where the ranges follow from
  Fact~\ref{fact:idem-1}. Likewise, $c^{\ts\prime\prime} > 1$ if and
  only if $c < 1 < c^{\ts\prime}$ or $c^{\ts\prime} < 1 <
  c$. Together, this provides the claimed $C_2$-grading.
   
  For even $d$, by Eq.~\eqref{eq:det-MC}, the sign of $1 \nts -c$
  matches the sign of the determinant.
\end{proof}

\begin{remark}
Given $d\geqslant 2$, one can consider
$X_d \defeq \big\{ x\in X : x \leqslant \frac{d}{d-1} \big\}$, which
defines a submonoid of $X\nts $, which is again $C_2$-graded.  Then,
we have two successive monoid homomorphisms, namely
\[
     \cC_d  \, \xrightarrow{\quad} \, X_d
     \, \xrightarrow{\quad} \, C_2  \ts ,
\]
which summarises the grading structure. Let us mention in passing that
the grading can be extended to include the singular matrices (that is,
those with $c=1$), and thus cover all of $\cC_d$, by employing the
semigroup $\{ -1, 0 , 1\}$ instead of $C_2$.
\exend
\end{remark}

Next, we consider the set $\cC_d$ in a little more detail. We begin
with the closed subset (and semigroup) of non-trivial idempotents,
\[
  \cC_{d,1} \, \defeq \, \{ M \in \cC_d : c = 1 \}
   \, = \, \{ M\in \cC_d : M^2 = M \ne \one \}
   \, = \, \{ M \in \cC_d : \det (M) = 0 \} \ts ,
\] 
where the alternative characterisations immediately follow from
Fact~\ref{fact:idem-1} and Eq.~\eqref{eq:det-MC}. Let
$E_i \in \Mat (d,\RR)$ denote the matrix with $1$ in all positions of
column $i$ and $0$ everywhere else, which is an idempotent Markov
matrix. In fact, these matrices satisfy
\begin{equation}\label{eq:idem-ei}
  E_i \ts E_j \, = \, E_j \quad
  \text{for all } 1 \leqslant i,j \leqslant d \ts ,
\end{equation}
and it is easy to see that any convex combination
$M = \sum_{i=1}^{d} \beta_{i} E_{i}$, where all
$\beta_{i} \geqslant 0$ and
$\beta^{}_{1} \nts + \ldots + \beta^{}_{d} = 1$, is an idempotent as
well. For $d\geqslant 2$, we also define the rational matrix
\begin{equation}\label{eq:GD-def}
    G^{}_{\nts d} \, = \, \myfrac{1}{d-1} \left(
    \begin{pmatrix} 1 & \cdots & 1 \\
      \vdots & \ddots & \vdots \\ 1 & \cdots & 1 \end{pmatrix}
      - \ts \one \right) \, = \, \myfrac{1}{d-1} 
      \bigl( E^{}_{1} \nts + \ldots + E^{}_{d} - \one \bigr) ,
\end{equation}
which is the unique element of $\cC_d$ with 
maximal summatory parameter, $c=\frac{d}{d-1}$.

\begin{lemma}\label{lem:convex-1}
  The sets\/ $\cC_d$ and\/ $\cC_{d,1}$ are convex.  When\/
  $d \geqslant 2$, the extremal elements of\/ $\cC_{d,1}$ are the
  idempotent matrices\/ $E_1 , \ldots , E_d$, which remain extremal
  in\/ $\cC_d$.  There are two further extremal elements in\/ $\cC_d$,
  namely\/ $\one$ and\/ $G_d$.
\end{lemma}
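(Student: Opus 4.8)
The plan is to transport everything to the parameter space via the affine map $\pi$ that sends $v=(c^{}_1,\ldots,c^{}_d)$ to the equal-input matrix $M^{}_C$ of \eqref{eq:equal-input}, and then to read off the extremal structure from an explicit convex-hull description. For $d\geqslant 2$ the map $\pi$ is an affine \emph{injection}, since the off-diagonal entries of $M^{}_C$ recover $v$, namely $(M^{}_C)^{}_{ij}=c^{}_j$ for $i\neq j$, while $(M^{}_C)^{}_{ii}=1-c+c^{}_i$. Under $\pi$, the set $\cC_d$ is the image of
\[
  P^{}_d \; \defeq \; \bigl\{\, v\in\mathbb{R}^{d} :\ c^{}_i\geqslant 0
  \text{ for all } i ,\ \ \textstyle\sum_{j}c^{}_j\leqslant 1+\min^{}_i c^{}_i \,\bigr\} ,
\]
which is a bounded convex polytope, being cut out by the finitely many linear inequalities $c^{}_i\geqslant 0$ and $\sum_{j\neq i}c^{}_j\leqslant 1$ (boundedness by Fact~\ref{fact:idem-1}); hence $\cC_d=\pi(P^{}_d)$ is convex. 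Intersecting with the hyperplane $\sum_j c^{}_j=1$ turns the inequality $\sum_j c^{}_j\leqslant 1+\min^{}_i c^{}_i$ into $\min^{}_i c^{}_i\geqslant 0$, which is already imposed, so $\cC_{d,1}=\pi(\PP_d)$; thus $\cC_{d,1}$ is convex and, $\pi$ being an affine injection, affinely isomorphic to the standard simplex, so its extremal elements are precisely the images $\pi(e^{}_j)=E^{}_j$.

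Next I would check that the parameter points $0$, $e^{}_1,\ldots,e^{}_d$ and $\tfrac{1}{d-1}(1,\ldots,1)$ are vertices of $P^{}_d$ --- equivalently, by injectivity of $\pi$, that $\one$, the $E^{}_j$ and $G^{}_{d}$ are extremal in $\cC_d$. For $0$ this is immediate, as any $v',v''\geqslant 0$ with $\tfrac12(v'+v'')=0$ vanish. For $e^{}_i$: if $\tfrac12(v'+v'')=e^{}_i$ with $v',v''\in P^{}_d$, then $c'^{}_j=c''^{}_j=0$ for $j\neq i$, while $c'^{}_i+c''^{}_i=2$ together with $c'^{}_i,c''^{}_i\leqslant 1$ (these being off-diagonal entries of the Markov matrices $\pi(v'),\pi(v'')$, hence lying in $[0,1]$) forces $v'=v''=e^{}_i$. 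For $\tfrac{1}{d-1}(1,\ldots,1)$: its summatory parameter $\tfrac{d}{d-1}$ is the maximum over $P^{}_d$ by Fact~\ref{fact:idem-1}, so in any representation $\tfrac12(v'+v'')$ both summands have summatory parameter $\tfrac{d}{d-1}$, and, again by Fact~\ref{fact:idem-1}, that value is attained only at $\tfrac{1}{d-1}(1,\ldots,1)$. Combined with the first paragraph, this shows that $E^{}_1,\ldots,E^{}_d$ are the extremal elements of $\cC_{d,1}$ and remain extremal in $\cC_d$.

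Finally, to see that these $d+2$ matrices exhaust the extremal elements of $\cC_d$, I would exhibit, for an arbitrary $M^{}_C\in\cC_d$ with summatory parameter $c$ and $m\defeq\min^{}_i c^{}_i\geqslant 0$, the explicit convex representation
\[
  M^{}_C \;=\; \beta^{}_0\,\one + \sum_{j=1}^{d}\beta^{}_j\,E^{}_j + \mu\,G^{}_{d} ,
\]
obtained by choosing any $\mu\in\bigl[\,(d-1)\max(0,c-1),\,(d-1)m\,\bigr]$ and setting $\beta^{}_j=c^{}_j-\tfrac{\mu}{d-1}\geqslant 0$ and $\beta^{}_0=1-c+\tfrac{\mu}{d-1}\geqslant 0$; a short computation gives $\beta^{}_0+\beta^{}_1+\cdots+\beta^{}_d+\mu=1$ and that the parameter vector of the right-hand side equals $(c^{}_1,\ldots,c^{}_d)$. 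The displayed interval is nonempty precisely because $c\leqslant 1+m$, i.e.\ precisely because $v\in P^{}_d$. Hence $\cC_d=\mathrm{conv}\{\one,E^{}_1,\ldots,E^{}_d,G^{}_{d}\}$, which together with the extremality established above proves the claim. The one genuinely delicate point is this last step: recognising that the equal-input admissibility condition $c\leqslant 1+\min^{}_i c^{}_i$ is exactly the inequality that makes a legitimate choice of $\mu$ possible --- equivalently, that $P^{}_d$ has no vertices beyond $0$, the $e^{}_j$, and $\tfrac{1}{d-1}(1,\ldots,1)$.
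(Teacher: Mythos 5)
Your proposal is correct and follows essentially the same route as the paper: both hinge on the explicit convex decomposition of an arbitrary $M^{}_{C}$ in terms of $\one$, $E^{}_{1},\ldots,E^{}_{d}$ and $G^{}_{d}$ (your one-parameter family of admissible $\mu$ contains the paper's two specific choices, $\mu=0$ for $c\leqslant 1$ and $\mu=(d-1)\ts c^{}_{\min}$ for $c>1$), followed by a verification that each of these $d+2$ matrices is extremal. Your packaging via the affine injection into parameter space and the midpoint arguments for extremality (using $c^{}_{i}\leqslant 1$ for the $E^{}_{i}$ and uniqueness of the maximiser of the summatory parameter for $G^{}_{d}$) is a clean, complete substitute for the paper's appeal to linear independence and the Krein--Milman theorem.
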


\begin{proof}
  The convexity of $\cC_d$ follows from
\[
  \alpha \ts M^{}_{C} + (1-\alpha) M^{}_{C{\ts}'} \, = \, \bigl( 1 -
  (\alpha \ts c + (1-\alpha) c^{\ts\prime} \ts )\bigr) \one + \alpha
  \ts C + (1-\alpha) C{\ts}'
\]   
for $\alpha \in [ \ts 0,1]$ together with the linearity of the
summatory parameter. The convexity of the subset $\cC_{d,1}$ is then
obvious because these are the elements with $c=1$.  Clearly, the
convex combinations of the $d$ matrices $E_1, \ldots , E_d$ span
$\cC_{d,1}$. Since they are linearly independent, they must be
extremal.
   
Let $M\in \cC_d$, so $M = M^{}_{C}$, where $C$ has equal rows
$(c^{}_{1}, \ldots , c^{}_{d})$ with $c^{}_i \geqslant 0$ and
parameter sum $c \in \bigl[ 0, \frac{d}{d-1} \bigr]$, subject to the
condition $c \leqslant 1 + c^{}_{\min}$ with
$c^{}_{\min} = \min^{}_{\ts i} c^{}_i$.  
Now, we will show that $M$ is of the form
$r \ts \one + s \ts\ts G^{}_{\nts d} + \sum_i t^{}_i \ts E^{}_i$ for
some $r,s,t_i \geqslant 0$ that sum to $1$.
   
If $c\in [ \ts 0,1]$, simply choose $s=0$, $t^{}_i = c^{}_i$ and
$r=1-c$, which does the job. If $c>1$, we have $c^{}_{\min}>0$ 
{in our setting.} Choose
$s=(d-1) c^{}_{\min}$ and $t^{}_{i} = c^{}_{i} - c^{}_{\min}$, where
$s\geqslant 0$ and all $t^{}_{i} \geqslant 0$ by construction. Then,
$s+ \sum_i t^{}_{i} = c - c^{}_{\min} \leqslant 1$, so we can
complete this with $r = 1 - s - \sum_i t^{}_{i} \geqslant 0$.  It is
easy to check that this gives a convex combination with summatory
parameter $c$, and also that the sum equals $M$.
   
Consequently, the compact set $\cC_d$ is the convex hull of
$\{ \one, G^{}_{\nts d}, E^{}_{1} , \ldots , E^{}_{d} \}$, and hence
the smallest convex set that contains these $d+2$ matrices. In view of
the Krein--Milman theorem \cite[Thm.~2.6.16]{Web}, it remains to show
that they all are extremal. This is clear for $\one$, where $c=0$, and
for $G^{}_{\nts d}$, which is the only matrix in $\cC_d$ with
$c=\frac{d}{d-1}$.  Since the $E_i$ are linearly independent, but all
have $c=1$, none can be replaced by a convex combination of the other
matrices (including $\one$ and $G^{}_{\nts d}$), which completes the
argument.
\end{proof}

\begin{example}\label{ex:C2}
 For $d=2$, all Markov matrices are of equal-input type.
 The four extremal elements of $\cC_2=\cM_2$ are given by
\[
   {\renewcommand{\arraystretch}{1.2}
   \begin{array}{c|cccc} 
  c & \;\; 0 & 1 & 1 & 2 \\ \hline
  C & \;\; \nix 
    & \left(\begin{smallmatrix} 1 & 0 \\ 1 & 0 \end{smallmatrix}\right)
    & \left(\begin{smallmatrix} 0 & 1 \\ 0 & 1 \end{smallmatrix}\right)
    & \left(\begin{smallmatrix} 1 & 1 \\ 1 & 1 \end{smallmatrix}\right) \\
  M^{}_{C} & \;\; \one 
      & \left(\begin{smallmatrix} 1 & 0 \\ 1 & 0 \end{smallmatrix}\right)
     & \left(\begin{smallmatrix} 0 & 1 \\ 0 & 1 \end{smallmatrix}\right)
     & \left(\begin{smallmatrix} 0 & 1 \\ 1 & 0 \end{smallmatrix}\right)
  \end{array}}
\]
where the two Markov matrices with $c=1$ span $\cC_{2,1}$; see
\cite[Fig.~1]{BS} for an illustration.

The situation is a little more interesting for $d=3$, where
$\cC_3 \subsetneq \cM_3$, and one has
\[
   {\renewcommand{\arraystretch}{1.2}
   \begin{array}{c|ccccc} 
  c & \;\; 0 & 1 & 1 & 1 & \frac{3}{2_{\vphantom{y}}} \\ \hline
 M^{}_{C} & \;\; \one\rule[-9pt]{0pt}{26pt}
    & \left(\begin{smallmatrix} 1 & 0 & 0 \\ 1 & 0 & 0 \\
          1 & 0 & 0 \end{smallmatrix}\right)
    & \left(\begin{smallmatrix} 0 &1 & 0 \\ 0 & 1 & 0 \\
           0 & 1 & 0 \end{smallmatrix}\right)
    & \left(\begin{smallmatrix} 0 & 0 & 1 \\ 0 & 0 & 1 \\
           0 & 0 & 1 \end{smallmatrix}\right)
    & \tfrac{1}{2}\!\left(\begin{smallmatrix} 0 & 1 & 1 \\
           1 & 0 & 1 \\ 1 & 1 & 0 \end{smallmatrix}\right) 
  \end{array}}
\]
part of which will reappear in Table~\ref{tab:extremal} below.
\exend
\end{example}

\subsection{Equal-input generators and embeddability}
If $Q$ is an equal-input generator with summatory parameter $c$, so
$Q=C-c\ts\ts \one$, its exponential is
\begin{equation}\label{eq:exp-eig}
  \ee^Q \, = \, \one + \myfrac{1-\ee^{-c}}{c}\ts Q
  \, = \, \myfrac{1-\ee^{-c}}{c} \ts C + \ee^{-c}\ts \one \ts ,
\end{equation}
with $C=\nix$ for $c=0$, so the summatory parameter of $\ee^Q$ 
is always given by $\tilde{c} = 1 - \ee^{-c}$. For embeddability, one 
has the following well-known result; see \cite{King,Davies} for 
background.

\begin{lemma}\label{lem:Kendall}
  The Markov matrix\/
  $M = \left( \begin{smallmatrix} 1-a & a \\ b & 1-b \end{smallmatrix}
  \right)$ with\/ $a,b\in [ \ts 0,1]$ is embeddable if and only if\/
  $\det (M) > 0$, which is equivalent to the condition\/
  $0 \leqslant a+b <1$.  In this case, there is precisely one
  generator\/ $Q$ such that\/ $M=\ee^Q$, namely\/
  $Q = - \frac{\log(1-a-b)}{a+b} \bigl( M \nts - \one \bigr)$, which
  is an equal-input generator.
\end{lemma}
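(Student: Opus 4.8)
The plan is to reduce everything to the equal-input structure available for $d=2$, where, as noted in Example~\ref{ex:C2}, every Markov matrix already lies in $\cC_2$. First, a direct computation gives $\det (M) = (1-a)(1-b) - ab = 1 - a - b$, and since $a,b\in[\ts 0,1]$ forces $a+b\geqslant 0$, the condition $\det(M) > 0$ is indeed equivalent to $0 \leqslant a+b < 1$. For the necessity of a positive determinant, I would simply invoke $\det(\ee^Q) = \ee^{\tr Q} > 0$, valid for every real matrix $Q$; hence an embeddable $M$ must satisfy $\det(M) > 0$.

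For the converse, write $M = M^{}_{C}$ with $C$ having rows $(b,a)$, so its summatory parameter is $c = a+b \in [\ts 0,1)$, and equivalently $M = \one + (M-\one)$ with $M - \one = \left(\begin{smallmatrix}-a & a\\ b & -b\end{smallmatrix}\right)$. I then look for an equal-input generator $Q = Q^{}_{C{\ts}'}$ with summatory parameter $c{\ts}'$ solving $\ee^Q = M$. By Eq.~\eqref{eq:exp-eig}, the summatory parameter of $\ee^Q$ equals $1 - \ee^{-c{\ts}'}$, which must match $c = a+b$; this yields $c{\ts}' = -\log(1-a-b)$, well defined and non-negative because $1-a-b \in (0,1]$. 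Comparing the two sides of Eq.~\eqref{eq:exp-eig} then forces $Q = \frac{c{\ts}'}{c}(M-\one) = -\frac{\log(1-a-b)}{a+b}(M-\one)$, with the removable boundary case $a=b=0$ giving $Q = \nix$ and $M = \one$. One checks that this $Q$ is a genuine generator: $M-\one$ has vanishing row sums and non-negative off-diagonal entries $a,b$, and the scalar $-\log(1-a-b)/(a+b)$ is non-negative; moreover it is equal-input, which is automatic since every $2\times 2$ Markov generator has equal rows.

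For uniqueness, I would argue via the spectrum. Any generator $Q = \left(\begin{smallmatrix}-\alpha & \alpha\\ \beta & -\beta\end{smallmatrix}\right)$ with $\alpha,\beta\geqslant 0$ has the two \emph{real} eigenvalues $0$ and $-(\alpha+\beta)$, so $\ee^Q$ has eigenvalues $1$ and $\ee^{-(\alpha+\beta)}$. If $\ee^Q = M$, comparison with the eigenvalues $1$ and $\det(M) = 1-a-b$ of $M$ gives $\ee^{-(\alpha+\beta)} = 1-a-b$, whence $\alpha+\beta = -\log(1-a-b)$ is uniquely determined. When $a+b > 0$, the two eigenvalues of $M$ are distinct, so $Q$ commutes with the non-derogatory matrix $M$ and is therefore a polynomial in it, $Q = r\ts\one + s\ts M$; matching the eigenvalue pair $\bigl(0,\log(1-a-b)\bigr)$ of $Q$ with $(1,1-a-b)$ of $M$ pins down $r$ and $s$ uniquely and reproduces the formula above. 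When $a+b=0$, the relation $\alpha+\beta=0$ forces $Q=\nix$. The only genuine obstacle I anticipate is ruling out spurious ``other logarithms''; what makes this painless here is precisely that a $2\times 2$ Markov generator always has real spectrum, so no complex-branch ambiguity can arise.
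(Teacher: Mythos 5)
Your proof is correct, but it is worth noting that the paper does not actually prove this lemma: it simply cites Kendall's theorem and the uniqueness statement from \cite{BS}, so your argument is a genuinely self-contained replacement rather than a rederivation of the paper's route. Your existence direction is the standard one and matches the spirit of the surrounding text (writing $M=M^{}_{C}$ with $c=a+b$, matching summatory parameters via \eqref{eq:exp-eig} to get $c\ts'=-\log(1-a-b)$, and checking the scalar multiple of $M-\one$ is a generator); the necessity of $\det(M)>0$ via $\det(\ee^Q)=\ee^{\tr Q}$ is exactly right. Your uniqueness argument is the more interesting divergence: instead of quoting \cite[Cor.~3.3]{BS}, you observe that every $2\times 2$ zero-row-sum real matrix has real spectrum $\{0,-(\alpha+\beta)\}$, so the spectral mapping theorem pins down $\alpha+\beta$ with no branch ambiguity, and then (for $a+b>0$) the commutant of the non-derogatory $M$ forces $Q\in\RR[M]$, which determines $Q$ completely. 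This is clean and in fact proves slightly more than the lemma claims, namely uniqueness of the real logarithm among \emph{all} real matrices with zero row sums, not just among generators --- the same strengthening the paper later develops in general form in Theorem~\ref{thm:unique}. The only point I would make explicit is the one you use tacitly when matching summatory parameters: the equal-input representation $M=(1-c)\ts\one+C$ is unique (the off-diagonal entries of $M$ determine the rows of $C$), so the summatory parameter of $M$ really is $a+b$ and of $\ee^{Q}$ really is $1-\ee^{-c\ts'}$; this is immediate for $d=2$ but deserves a sentence.
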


\begin{proof}
  The first statement is Kendall's theorem, see \cite[Thm.~3.1]{BS}
  for a complete formulation, while the uniqueness claim is
  established in \cite[Eq.~(5) and Cor.~3.3{\ts}]{BS}.
\end{proof}

Put differently, since $\cC_2=\cM_2$, a matrix $M\in\cM_2$ is
embeddable if and only if its summatory parameter satisfies
$0\leqslant c < 1$. The closure of the set of embeddable matrices in
$\cM_2$ consists of all infinitely divisible elements of $\cM_2$, as
we shall discuss in more detail later, in Theorem~\ref{thm:monotone-2}
and Example~\ref{ex:Pois}.

\begin{remark}\label{rem:one}
  The equation $\ee^{\ts x} = 1$ has precisely one solution $x\in\RR$,
  namely $x=0$. In contrast, $\ee^A = \one$ with $A\in\Mat (2,\RR)$
  has already infinitely many solutions, including
  $A = n \left( \begin{smallmatrix} 0 & - 2 \pi \\ 2 \pi &
      0 \end{smallmatrix} \right)$ with $n\in\ZZ$. Restricting $A$ to
  real matrices with zero row sums restores uniqueness, because one
  eigenvalue of $A$ is then $0$, hence also the second, by the
  \emph{spectral mapping theorem} (SMT), as $A$ is real. Since $A$
  must be diagonalisable by \cite[Fact~2.15]{BS}, $A=\nix$ is the only
  solution.
   
  When $A = (a_{ij})^{}_{1\leqslant i,j \leqslant d}$ is a Markov
  generator with $\ee^A = \one$, for arbitrary $d$, one has
  $1 = \det (\ee^A) = \ee^{\tr (A)}$ and thus
  $0 = \tr (A) = - \sum_{i\ne j} a_{ij}$. With $a_{ij}\geqslant 0$ for
  all $i\ne j$ by the generator property, this gives $a_{ij} = 0$ for
  all $i\ne j$, hence also $a_{ii}=0$ for all $i$.  Consequently,
  $A=\nix$ is the only generator with $\ee^A = \one$.  However,
  already for $d=3$, there are further solutions of $\ee^A = \one$
  among the real matrices with zero row sums, which is one reason why
  the embedding problem becomes {significantly} more complicated for
  $d\geqslant 3$.  \exend
\end{remark}

In general, when $Q$ is an equal-input generator, then so is
$\frac{1}{n} \ts Q$, for every $n\in\NN$. Now, we can reformulate
results from \cite{BS} and combine them with Kingman's
characterisation of embeddability via regularity in conjunction with
infinite divisibility \cite[Prop.~7]{King}. As this is compatible with
the equal-input structure, we can summarise the general situation as
follows.

\begin{prop}\label{prop:ei-embed}
  When\/ $d$ is even, $M \in \cC_d$ is embeddable if and only if\/
  $0 \leqslant c < 1$. When $d\geqslant 3$ is odd, there are further
  embeddable cases with $c>1$.
  
  For arbitrary\/ $d$ and\/ $M\in \cC_d$, the following properties are
  equivalent.
\begin{enumerate}\itemsep=2pt  
\item $M$ has positive spectrum.
\item $M$ is embeddable via an equal-input generator. 
\item $M$ is non-singular and infinitely divisible within\/ $\cC_d$. 
\item The summatory parameter of\/ $M$ satisfies\/
   $0\leqslant c < 1$. \qed
\end{enumerate}
\end{prop}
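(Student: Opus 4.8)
The strategy is to chain the four equivalences (1)–(4) into a cycle, leaning on the explicit spectral formula \eqref{eq:exp-eig} for equal-input generators and on Fact~\ref{fact:idem-1} for the admissible range of the summatory parameter. First I would establish (1) $\Leftrightarrow$ (4). A matrix $M = M^{}_{C} \in \cC_d$ has eigenvalues $1$ (from the row sums, since $MC = C$ forces $C$'s row vector to be a left eigenvector complement) and $1-c$ with multiplicity $d-1$, exactly as recorded around \eqref{eq:det-MC}. So the spectrum is $\{1, 1-c\}$ (with $1-c$ possibly equal to $1$ when $c=0$), and it is positive precisely when $1-c > 0$, i.e.\ $0 \leqslant c < 1$. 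This gives (1) $\Leftrightarrow$ (4) immediately, and it is the cleanest of the four arrows.

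Next, (4) $\Rightarrow$ (2): given $0 \leqslant c < 1$, set $Q = Q^{}_{C'}$ with $C' = \frac{-\log(1-c)}{c}\, C$ (and $Q = \nix$ when $c=0$); this is a legitimate equal-input generator since $\frac{-\log(1-c)}{c} > 0$, so $C'$ is non-negative with equal rows, and by \eqref{eq:exp-eig} one computes $\ee^{Q} = \ee^{-c'}\one + \frac{1-\ee^{-c'}}{c'}\,C'$ with $c' = -\log(1-c)$, which simplifies to $(1-c)\one + C = M^{}_{C}$. For (2) $\Rightarrow$ (3): if $M = \ee^{Q}$ with $Q$ an equal-input generator, then $\frac{1}{n}Q$ is again an equal-input generator for every $n \in \NN$, so $\ee^{Q/n} \in \cC_d$ is a Markov $n$-th root of $M$ inside $\cC_d$; non-singularity follows from $\det(M) = \ee^{\tr Q} > 0$. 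Finally (3) $\Rightarrow$ (1): a non-singular $M \in \cC_d$ has $\det(M) = (1-c)^{d-1} \neq 0$, so $c \neq 1$; if $c > 1$ were possible while $M$ is infinitely divisible within $\cC_d$, then for each $n$ there is an $n$-th root $M^{}_{C_n} \in \cC_d$ with summatory parameter $c_n$ satisfying $1-c_n = (1-c)^{1/n}$ taken as a real $n$-th root — but for $n$ even, $(1-c)^{1/n}$ is not real since $1-c < 0$, contradicting $c_n \in [0, \tfrac{d}{d-1}]$ from Fact~\ref{fact:idem-1}. Hence $c < 1$, and since $c \geqslant 1$ is excluded we get $1-c > 0$, i.e.\ positive spectrum. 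This closes the cycle (1) $\Rightarrow$ (4) $\Rightarrow$ (2) $\Rightarrow$ (3) $\Rightarrow$ (1).

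For the first paragraph of the proposition (the parity statement): when $d$ is even, $\det(M^{}_C) = (1-c)^{d-1}$ has the sign of $1-c$, so embeddability, which forces $\det > 0$, requires $c < 1$; conversely (4) $\Rightarrow$ (2) above gives embeddability for all $0 \leqslant c < 1$. When $d \geqslant 3$ is odd, one exhibits an explicit example: taking $Q = Q^{}_{C}$ with $C = \frac{c}{d}(E_1 + \dots + E_d)$ and a suitable $c > 1$ within the admissible range $(1, \tfrac{d}{d-1}]$, the exponential $\ee^{Q}$ has summatory parameter $\tilde c = 1 - \ee^{-c} < 1$ but is \emph{not} itself the relevant point — rather, the point is that $G^{}_d$-type matrices or their relatives with $c>1$ can arise as $\ee^{Q}$ for non-equal-input generators; concretely, the idempotent-plus-perturbation construction of \cite[Sec.~2.3]{BS} furnishes embeddable $M \in \cC_d$ with $c > 1$ for odd $d$. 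I would simply cite the relevant example from \cite{BS} here rather than reconstruct it.

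The main obstacle is the implication (3) $\Rightarrow$ (1), specifically making rigorous that infinite divisibility \emph{within} $\cC_d$ pins down the $n$-th root's summatory parameter. The subtlety is that an equal-input matrix with spectrum $\{1, \lambda\}$ (where $\lambda = 1-c$) has $\lambda$ with multiplicity $d-1 \geqslant 1$, so its Markov $n$-th roots in $\cC_d$ must have eigenvalue $\mu$ with $\mu^n = \lambda$ and $\mu$ real — and for $n$ even with $\lambda < 0$ this is impossible, while for $\lambda = 0$ one gets $\mu = 0$, forcing the root to be a singular idempotent, which then cannot have $M$ (non-singular) as its $n$-th power. Handling the $\lambda = 0$ (i.e.\ $c=1$) case is exactly where non-singularity in hypothesis (3) does its work, and I would make sure to treat it explicitly rather than fold it silently into the even-$n$ argument.
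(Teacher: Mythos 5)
Your proof is correct and runs on exactly the machinery the paper itself invokes: the paper states this proposition with only a \qed, deferring to Eq.~\eqref{eq:exp-eig}, the explicit generator \eqref{eq:ei-gen}, the power formula $M^{\ts n}_{C} = (1-c)^n\ts\one + \frac{1-(1-c)^n}{c}\ts C$ preceding Fact~\ref{fact:limit}, and the results of \cite{BS,King}, and your cycle $(1)\Rightarrow(4)\Rightarrow(2)\Rightarrow(3)\Rightarrow(1)$ fills in precisely those steps (the spectrum $\{1,(1-c)^{(d-1)}\}$, the rescaled generator, closure of equal-input generators under division by $n$, and the parity obstruction $(1-c_n)^n\geqslant 0$ for $n$ even). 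The one point you leave to a citation --- exhibiting embeddable $M\in\cC_d$ with $c>1$ for odd $d\geqslant 3$ --- is also the point the paper settles only by reference to \cite[Ex.~16]{Davies} and \cite[Ex.~4.3]{BS}, so your treatment matches the paper's.
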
 

{Note that, for $M  \in\cC_d$ with summatory parameter $c$,
  the even/odd dichotomy with the dimension 
emerges  from Eq.~\eqref{eq:det-MC}.}
A concrete example of an embeddable matrix $M\in\cC_3$ with $c>1$ is
discussed in \cite[Ex.~16]{Davies} and \cite[Ex.~4.3]{BS}.  This $M$
is infinitely divisible within $\cM_3$, but not within $\cC_3$. In
fact, since $\sqrt[n \, ]{M}$ has spectrum
$\big\{ 1, \exp \bigl( \frac{-\pi\sqrt{3}}{n} \pm \frac{\mathrm{i} \ts
  \pi}{n}\bigr) \big\}$ in this case, $M$ does not possess an $n$-th
root of equal-input type for any $n\geqslant 2$.

\begin{example}\label{ex:constant}
  Within $\cC_d$ lies the submonoid of \emph{constant-input} matrices
  \cite[Rem.~4.8]{BS}, which all are of the form
  $M_c \defeq \one + c \ts J_d$ with
  $0\leqslant c \leqslant \frac{d}{d-1}$, where
  $J_d = \frac{d-1}{d} \ts (G_d - \one)$ with $G_d$ from
  \eqref{eq:GD-def} is a constant-input generator with summatory
  parameter $1$, hence $J^{2}_{d} = - J^{}_{d}\ts $.  Clearly, $J_d$,
  as well as every constant-input matrix, is diagonalisable. If
  $c\in [\ts 0,1)$, the spectral radius of $c \ts J_d$ is $c<1$, and a
  simple calculation with $\log (\one + c \ts J_d)$ gives
\[
    M_c \, \defeq \, \one + c \ts J_d \, = \, \exp\bigl(
    -\log (1-c) \ts J_d \bigr) .
\]  
For $d$ even, by Proposition~\ref{prop:ei-embed}, no constant-input
Markov matrix with $c>1$ can be embeddable, while this changes for
$d\geqslant 3$ odd.
  
Assume that $d$ is odd and $M_c$ with $c>1$ is embeddable, so
$M=\ee^Q$, where we also have $[J_d, Q\ts ] = \nix$.  So, by
\cite[Lemma~4.10 and Fact~2.15]{BS}, $Q$ is doubly stochastic and
diagonalisable. As the eigenvalues of $M_c$ are $1$ and $1-c<0$, the
latter with multiplicity $d-1$, the spectrum of $Q$ cannot be real. In
particular, $Q$ is not symmetric.  Still, for any $a\in [\ts 0,1)$, we
get
\[
    M_a \ts M_c \, = \, \ee^{-\log (1-a)  J_d} \ee^{Q} \, = \,
    \ee^{Q - \log (1-a)  J_d} \ts ,
\]  
and $M_{f (a,c)}$ is embeddable as well, where $f$ is the function
from Lemma~\ref{lem:ranges}. 

When $c>1$ is fixed and $a$ varies in $[\ts 0,1)$, $f (a,c)$ runs
through $(1,c \ts\ts ]$. Now, the infinitely divisible elements of
$\cM_d$ form a closed subset, as follows by a standard compactness
argument via convergent subsequences.  Since $M_c$ with $c>1$ is
non-singular, we see that there is a number
$c^{}_{\max} \in (1,2 \ts ]$ such that $M_c$ is embeddable precisely
for all $c \in [\ts 0,1 ) \cup ( 1, c^{}_{\max} ]$. These
constant-input matrices form a monoid that inherits the $C_2$-grading
from Proposition~\ref{prop:grading}. For the case $d=3$, we know from
\cite[Cor.~6.6]{BS} that $c^{}_{\max} = 1+\ee^{-\pi \sqrt{3}}$. The
determination of $c^{}_{\max}>1$ for $d=2m+1$ with $m\geqslant 2$ is
an interesting open question.  \exend
\end{example}

When $d\geqslant 3$, the embedding of $M\in\cC_d$
need no longer be unique as for $d=2$, but one still has the following
property.

\begin{lemma}\label{lem:ei-unique}
  Let\/ $d\geqslant 2$ and let\/ $M\in\cC_{d}$ be embeddable. If\/ $M$
  admits a representation of the form\/ $M=\ee^Q$ with a generator\/
  $Q$ of equal-input type, the latter is unique in the sense that no
  other embedding can have an equal-input generator.
\end{lemma}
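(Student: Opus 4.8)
The plan is to invert the exponential map on equal-input generators, obtaining a closed formula for the generator in terms of $M$ alone; this is the $d$-dimensional analogue of the uniqueness statement in Lemma~\ref{lem:Kendall}. So suppose $M=\ee^{Q^{}_{C}}$ with an equal-input generator $Q^{}_{C}=C-c\ts\one$ of summatory parameter $c\geqslant 0$. First I would rewrite \eqref{eq:exp-eig} as
\[
   M-\one \, = \, \myfrac{1-\ee^{-c}}{c}\,C+(\ee^{-c}-1)\ts\one
   \, = \, \myfrac{1-\ee^{-c}}{c}\,\bigl(C-c\ts\one\bigr)
   \, = \, \myfrac{1-\ee^{-c}}{c}\,Q^{}_{C}\ts ,
\]
which also shows that the summatory parameter of $M$, say $c^{}_{M}$, equals $1-\ee^{-c}$, so that $c^{}_{M}\in[\ts 0,1)$, in accordance with Proposition~\ref{prop:ei-embed}.

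Next I would note that $c^{}_{M}$ is read off from $M$: since $d\geqslant 2$, for $i\ne j$ the $(i,j)$-entry of $M=(1-c^{}_{M})\one+C$ is the $j$-th parameter of $C$, so these parameters, and hence their sum $c^{}_{M}$, are determined by $M$. (Alternatively one may argue spectrally, since $Q^{}_{C}$ has eigenvalues $0$ and $-c$ with multiplicities $1$ and $d-1$, so that $M$ carries the eigenvalue $\ee^{-c}$ with multiplicity $d-1\geqslant 1$.) As $c\mapsto 1-\ee^{-c}$ is a strictly increasing bijection from $[\ts 0,\infty)$ onto $[\ts 0,1)$, we recover $c=-\log(1-c^{}_{M})$ from $M$, and the display above then gives
\[
   Q^{}_{C} \, = \, \myfrac{c}{1-\ee^{-c}}\,\bigl(M-\one\bigr)
   \, = \, \myfrac{-\log(1-c^{}_{M})}{c^{}_{M}}\,\bigl(M-\one\bigr) ,
\]
where the right-hand side is read as $\nix$ when $c^{}_{M}=0$, that is, when $M=\one$. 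Hence $Q^{}_{C}$ is uniquely pinned down by $M$, so any two equal-input generators with exponential $M$ coincide, which is exactly the assertion.

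I do not expect a real obstacle here; the whole argument rests on the elementary fact that $c\mapsto 1-\ee^{-c}$ is strictly monotone, so that knowing $M$ (hence $c^{}_{M}$) fixes $c$ and thereby $C$. The only places calling for a little care are the well-definedness of $c^{}_{M}$ as a function of $M$ (where $d\geqslant 2$ enters) and the degenerate case $c=0$, in which $Q^{}_{C}=\nix$ and $M=\one$.
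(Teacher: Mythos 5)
Your proof is correct and rests on the same mechanism as the paper's: Eq.~\eqref{eq:exp-eig} together with the uniqueness (for $d\geqslant 2$) of the decomposition of $M$ into a multiple of $\one$ plus an equal-row matrix, which pins down $\ee^{-c}$ and hence $c$ and $C$. The paper phrases this as a direct comparison of two candidate generators, whereas you invert the map explicitly and thereby also recover the closed formula of Eq.~\eqref{eq:ei-gen}, but the argument is essentially identical.
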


\begin{proof}
  The claim is obvious for $M=\one$, where $Q=\nix$ is the only
  {generator that solves $\one = \ee^Q$;} 
  compare Remark~\ref{rem:one}.  Next, let $Q$
  and $Q{\ts}'$ be equal-input generators, with summatory parameters
  $c$ and $c^{\ts\prime}$, where we may now assume that
  $c \ts\ts c^{\ts\prime} >0$. If $\ee^Q = \ee^{Q{\ts}'}\!$,
  Eq.~\eqref{eq:exp-eig} implies
\[
    \ee^{-c} \ts\ts \one + \myfrac{1-\ee^{-c}}{c} \, C \, = \,
     \ee^{-c^{\ts\prime}}  \one + 
       \myfrac{1-\ee^{-c^{\ts\prime}}}{c^{\ts\prime}} \, C{\ts}' .
\]
As both $C$ and $C{\ts}'$ are equal-row matrices, this can only hold
when $ \ee^{-c} = \ee^{-c^{\ts\prime}} $, hence $c=c^{\ts\prime}\nts$,
which in turn forces $C=C{\ts}'$ and thus $Q=Q{\ts}' $ as claimed.
\end{proof}

When $M\in\cC_d$ is equal-input embeddable, so $M=M^{}_{C}$ for some
{non-negative matrix} $C$ with parameter sum $0\leqslant c < 1$ by
Proposition~\ref{prop:ei-embed}, the unique generator from
Lemma~\ref{lem:ei-unique} is given by
\begin{equation}\label{eq:ei-gen}
    Q \, = \, - \frac{\log (1-c)}{c} \, (M^{}_{C}  - \one ) \ts ,
\end{equation}
meaning $Q=\nix$ for $c=0$, which is a nice extension of
Lemma~\ref{lem:Kendall}. The derivation rests on the observation that
$c<1$ is the spectral radius of $M^{}_{C} - \one$, which permits the
use of the standard branch of the matrix logarithm and its power
series.

Let us next expand on an observation made in \cite{BS}, in the context
of an effective BCH formula for embeddable equal-input matrices. Here,
one considers products of exponentials of equal-input generators,
including {the complicated case of} non-commuting ones.

\begin{theorem}\label{thm:BCH}
  Let\/ $Q $ and\/ $Q{\ts}' $ be equal-input generators, with
  summatory parameters\/ $c$ and\/ $c^{\ts\prime}$, respectively.
  Then, one has\/ $\ee^{Q} \ee^{Q{\ts}'} = \ee^{Q{\ts}''}\!$ with
\[
    Q{\ts}'' \, = \, \myfrac{c+c^{\ts\prime}}{c \ts
      \bigl(1-\ee^{-(c+c^{\ts\prime})}\bigr)}
    \bigl( \ee^{-c^{\ts\prime}} (1-\ee^{-c}) \ts Q +
    \myfrac{c}{c^{\ts\prime}}\ts (1-\ee^{-c^{\ts\prime}})
    \ts Q{\ts}' \ts \bigr) ,
\]
interpreted appropriately for\/ $c=0$ or\/ $c^{\ts\prime}=0$, where\/
$Q{\ts}''$ is again an equal-input generator.  In particular, when\/
$[Q, Q{\ts}' \ts ] =\nix$, the formula simplifies to\/
$Q{\ts}'' = Q + Q{\ts}' \nts$.
\end{theorem}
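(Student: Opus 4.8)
The plan is to exploit the fact, visible already in \eqref{eq:exp-eig}, that the exponential of an equal-input generator is \emph{itself} an equal-input Markov matrix. Consequently $\ee^{Q}\ee^{Q'}$ again lies in $\cC_d$ by Fact~\ref{fact:ci-rule}, and the whole statement reduces to three routine steps: computing the summatory parameter of this product, recognising it as embeddable and reading off its unique equal-input generator from \eqref{eq:ei-gen}, and then re-expressing that generator in terms of $Q$ and $Q'$. Throughout I would set aside the degenerate cases $c=0$ or $c'=0$, where $Q=\nix$ or $Q'=\nix$ makes the claim trivial; these are in any case absorbed by the commuting case handled at the end.

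First I would record, from \eqref{eq:exp-eig}, that $\ee^{Q}$ is the equal-input Markov matrix with summatory parameter $\tilde c = 1-\ee^{-c}$ and that $\ee^{Q}-\one = \frac{1-\ee^{-c}}{c}\ts Q$, so its underlying equal-row matrix is $\frac{\tilde c}{c}\ts Q + \tilde c\ts\one$; likewise for $Q'$ with $\tilde c' = 1-\ee^{-c'}$. Then Fact~\ref{fact:ci-rule} gives $M \defeq \ee^{Q}\ee^{Q'} \in \cC_d$ with summatory parameter $f(\tilde c,\tilde c')$ in the notation of Lemma~\ref{lem:ranges}, and a one-line computation with $u=\ee^{-c}$, $v=\ee^{-c'}$ shows $f(\tilde c,\tilde c') = 1-uv = 1-\ee^{-(c+c')} \in [\ts 0,1)$. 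By Proposition~\ref{prop:ei-embed}, $M$ is therefore non-singular and embeddable within $\cC_d$, so by \eqref{eq:ei-gen} and Lemma~\ref{lem:ei-unique} its unique equal-input generator is $Q'' = -\frac{\log(1-\tilde c'')}{\tilde c''}\,(M-\one)$ with $\tilde c'' = 1-\ee^{-(c+c')}$, which becomes $Q'' = \frac{c+c'}{1-\ee^{-(c+c')}}\,(M-\one)$ since $\log(1-\tilde c'') = -(c+c')$. It remains to rewrite $M-\one$: by the product rule of Fact~\ref{fact:ci-rule}, $M-\one$ equals $(1-\tilde c')$ times the equal-row matrix of $\ee^{Q}$, plus that of $\ee^{Q'}$, minus $\tilde c''\one$; substituting the expressions from the previous sentence, the $\one$-contributions cancel (their coefficient is $(1-\tilde c')\tilde c + \tilde c' - \tilde c'' = 0$), leaving $M-\one = \frac{(1-\tilde c')\tilde c}{c}\ts Q + \frac{\tilde c'}{c'}\ts Q'$. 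Inserting $1-\tilde c' = \ee^{-c'}$, $\tilde c = 1-\ee^{-c}$, $\tilde c' = 1-\ee^{-c'}$ and pulling out a factor $\frac1c$ then gives exactly the asserted formula; non-negativity of the scalar $-\log(1-\tilde c'')/\tilde c''$ ensures $Q''$ is an equal-input generator.

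For the final clause I would argue by uniqueness rather than by computation: if $[Q,Q']=\nix$ then $\ee^{Q}\ee^{Q'} = \ee^{Q+Q'}$, and $Q+Q'$ is again an equal-input generator (its summatory parameter is $c+c'\geq 0$), so Lemma~\ref{lem:ei-unique} forces $Q'' = Q+Q'$; this also covers $c=0$ and $c'=0$. One may alternatively observe that two equal-input generators with $c,c'>0$ commute precisely when $Q' = \frac{c'}{c}\ts Q$, in which case the general formula collapses to $\frac{c+c'}{c}\ts Q = Q+Q'$. The only place requiring genuine care — the ``hard part,'' such as it is — is the bookkeeping in the translation between a generator $Q$ and its associated equal-row matrix, keeping the two summatory parameters $c$ and $\tilde c = 1-\ee^{-c}$ apart, together with the check that the $\one$-terms cancel so that $Q''$ emerges as an honest positive linear combination of $Q$ and $Q'$; beyond that there is no conceptual obstacle.
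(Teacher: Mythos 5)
Your proof is correct and follows essentially the same route as the paper: both identify $\ee^{Q}\ee^{Q{\ts}'}$ as an equal-input Markov matrix with summatory parameter $1-\ee^{-(c+c^{\ts\prime})}\in[\ts 0,1)$, invoke equal-input embeddability from Proposition~\ref{prop:ei-embed}, and then extract the unique equal-input generator. The only cosmetic differences are that you read off $Q{\ts}''$ from Eq.~\eqref{eq:ei-gen} rather than matching coefficients of $\one$, $C$ and $C{\ts}'$ against Eq.~\eqref{eq:exp-eig}, and that you settle the commuting case by the uniqueness of Lemma~\ref{lem:ei-unique} instead of by direct substitution --- both steps are sound.
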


\begin{proof}
  Let $C$ and $C{\ts}'$ be the constant-row matrices underneath $Q$
  and $Q{\ts}' \nts$. Using the second identity from
  Eq.~\eqref{eq:exp-eig} in conjunction with the relation
  $C \ts C{\ts}' = c \, C{\ts}' \!$, one finds
\begin{equation}\label{eq:ei-1}
   \ee^Q \ee^{Q{\ts}'}  = \, \ee^{-(c + c^{\ts\prime}\ts )} \ts \one +
   \myfrac{\ee^{-c^{\ts\prime}} \nts (1-\ee^{-c})}{c} \, C +
   \myfrac{1-\ee^{-c^{\ts\prime}}}{c^{\ts\prime}} \, C{\ts}' .
\end{equation}
Since the summatory parameters of $\ee^Q$ and $\ee^{Q{\ts}'}$ are
$\tilde{c} = 1 - \ee^{-c}$ and
$\tilde{c}^{\ts\prime} = 1 - \ee^{-c^{\ts\prime}}$, which both lie in
$[ \ts 0,1)$, the product $\ee^Q \ee^{Q{\ts}'}$ is an equal-input
matrix that has summatory parameter
$\tilde{c}^{\ts\prime\prime} \in [ \ts 0,1)$ by
Lemma~\ref{lem:ranges}. As such, it is equal-input embeddable by
Proposition~\ref{prop:ei-embed}; see also \cite[Thm.~4.6]{BS}.
Consequently, there exists an equal-input generator
$Q{\ts}'' \! = C{\ts}'' \! - c^{\ts \prime \prime} \ts \one$ such that
\begin{equation}\label{eq:ei-2}
  \ee^Q \ee^{Q{\ts}'} \, = \, \ee^{Q{\ts}''}  = \,
  \ee^{-c^{\ts\prime\prime}} \one +
  \myfrac{1-\ee^{-c^{\ts\prime\prime}}}{c^{\ts\prime\prime}} \, C{\ts}'' ,
\end{equation}
where the last step follows once more from \eqref{eq:exp-eig}.

A comparison of \eqref{eq:ei-1} and \eqref{eq:ei-2} reveals that
equality can only hold when the summatory parameters satisfy
$c^{\ts\prime\prime} \nts = c + c^{\ts\prime}$, which then gives
\[
  C{\ts}''  = \, \myfrac{c+c^{\ts\prime}}
   {1-\ee^{-(c+c^{\ts\prime})}} \left( 
   \myfrac{\ee^{-c^{\ts\prime}} \nts (1-\ee^{-c})}{c} \, C +
   \myfrac{1-\ee^{-c^{\ts\prime}}}{c^{\ts\prime}} \, C{\ts}' \right) .
\]
Inserting $C = Q + c \ts\ts \one$ and the analogous terms for
$C{\ts}'$ and $C{\ts}''$ leads to the formula stated.

The condition $[Q, Q{\ts}' \ts ]=\nix$, which includes the case that
one generator is $\nix$, is equivalent to
$c^{\ts\prime} C = c\, C{\ts}'$, which also gives
$c^{\ts\prime} Q = c \, Q{\ts}'$. Inserting this into the formula for
$Q{\ts}''$ produces the claimed simplification after a short
calculation.
\end{proof}

In Theorem~\ref{thm:BCH}, the summatory parameters of $\ee^Q$,
$\ee^{Q{\ts}'}$ and $\ee^{Q{\ts}''}$ are always related by
\[
  \tilde{c}^{\ts\prime\prime} = \, \tilde{c} +
  \tilde{c}^{\ts\prime} \nts - \tilde{c} \ts\ts \tilde{c}^{\ts\prime}
  \, = \, f \bigl( \tilde{c} , \tilde{c}^{\ts\prime}\ts \bigr)
  \, = \, 1 - \ee^{-(c + c^{\ts\prime})} \, < \, 1 \ts ,
\]
in accordance with Fact~\ref{fact:ci-rule} and Lemma~\ref{lem:ranges}.
Various other aspects of equal- and constant-input Markov matrices
have been discussed in \cite{BS}, without, however, considering their 
connection with idempotents. We rectify this omission now by exploring
some ideas in this direction that will prove useful later.

\subsection{Markov idempotents and equal-input matrices}

When $d=2$, the only Markov idempotents are $\one$ and the equal-input
matrices $\alpha E^{}_{1} + (1-\alpha) E^{}_{2}$ with
$\alpha \in [\ts 0,1]$. This situation is deceptively simple, as one
realises already for $d=3$. Still, some general considerations are
possible, some of which can be considered as a refinement of
Fact~\ref{fact:idem-1}.

\begin{lemma}\label{lem:idem-pos-1}
  Let\/ $M = (m^{}_{ij})^{}_{1\leqslant i,j\leqslant d}$ be a Markov
  idempotent that is also a positive matrix, so\/ $m^{}_{ij} > 0$ for
  all\/ $ i,j \in [d\ts ]$.  Then, $M$ is equal-input with\/
  $M\nts =C$ for some positive equal-row matrix\/ $C$ with parameter
  sum\/ $c=1$.
\end{lemma}

\begin{proof}
  Let $m^{}_{ij}$ be a maximal element in column $j$ of $M$, so
  $m^{}_{kj} \leqslant m^{}_{ij}$ holds for all $k \in [d\ts ]$.
  Then, with $M^2 = M$, we get the inequality
\begin{equation}\label{eq:cond}
   m^{}_{ij} \, = \, \bigl( M^2 \bigr)_{ij} \, = \ts
   \sum_{k=1}^{d} m^{}_{ik} \, m^{}_{kj}
   \, \leqslant \, m^{}_{ij} \sum_{k=1}^{d} m^{}_{ik} \, = \,
   m^{}_{ij} \ts ,
\end{equation}
where we actually have equality. When all matrix elements are
positive, this is only possible if $m^{}_{kj} = m^{}_{ij}$ holds for
all $k \in [d\ts ]$, which means that column $j$ of $M$ is constant.

Since $j \in [d\ts ]$ was arbitrary, the above argument applies to 
any column of $M$, and we obtain
$M=c^{}_{1} E^{}_{1} \nts + \ldots + c^{}_{d} E^{}_{d}$, where all
$c^{}_{i} > 0$ with $c = c^{}_{1} \nts + \ldots + \ts c^{}_{d} = 1$ 
because $M$ is Markov.
\end{proof}

The statement of Lemma~\ref{lem:idem-pos-1} can also be understood via
the Perron--Frobenius theorem, as $M$ under the assumed conditions is
primitive. Then, $M^{}_{\infty} = \lim_{n\to\infty} M^n = M$ is the
projector to the unique equilibrium vector of $M$, which is
$(c^{}_{1}, \ldots , c^{}_{d})$. Note that being idempotent implies
that primitivity of $M$ becomes equivalent with positivity of $M$.

When $M$ fails to be positive, there are further cases. These are
driven by the added possibility of having equality in \eqref{eq:cond}
due to the presence of vanishing matrix elements.

\begin{example}\label{ex:idem}
  Let us look at Markov idempotents for $d=3$. When $M^2=M$ is
  positive, the complete answer is provided by
  Lemma~\ref{lem:idem-pos-1}, so we only need to analyse cases with
  zero entries. Since the set of idempotents within $\cM_d$ is closed,
  it clearly contains the simplex
\[
  \big\{ c^{}_{1} E^{}_{1} \nts + c^{}_{2} E^{}_{2} + c^{}_{3} E^{}_{3} :
  c^{}_{i} \geqslant 0 \text{ and } c^{}_{1} \nts + c^{}_{2} +
  c^{}_{3} = 1 \big\} ,
\]
in line with Fact~\ref{fact:idem-1}. This simplex includes the three
$\{ 0,1 \}$ matrices $E^{}_{1}$, $E^{}_{2}$ and $E^{}_{3}$, which are
its extremal elements. Each matrix in this simplex has a unique
equilibrium vector.

Further, one finds that the matrices
\[
  \begin{pmatrix} 1 & 0 & 0 \\ 0 & a & 1\! -\nts\nts a \\
    0 & a & 1\! -\nts\nts a  \end{pmatrix} , \quad
  \begin{pmatrix} a & 0 & 1\! -\nts\nts a \\
    0 & 1 & 0 \\ a & 0 & 1\! -\nts\nts a \end{pmatrix} ,
  \quad\text{and}\;
  \begin{pmatrix} a & 1\! -\nts\nts a & 0 \\
    a & 1\! -\nts\nts a & 0 \\ 0 & 0 & 1 \end{pmatrix}
\]
are idempotents for all $a\in [\ts 0,1]$. For each matrix, its
equilibrium vectors form a $1$-simplex, hence with two extremal
vectors. By choosing $a=0$ or $a=1$, we obtain six further $\{ 0,1 \}$
matrices. So far, all nine of them are equal-input or blockwise
equal-input, possibly after a state permutation, and the only missing
$\{ 0,1 \}$ Markov idempotent is $M=\one$.

Next, again for any $a\in [\ts 0,1]$, the matrices
\[
  \begin{pmatrix} 1 & 0 & 0 \\ 0 & 1 & 0 \\
    a & 1\! -\nts\nts a & 0  \end{pmatrix} , \quad
  \begin{pmatrix} 1 & 0 & 0 \\
    a & 0 & 1\! -\nts\nts a \\ 0 & 0 & 1 \end{pmatrix} ,
  \quad\text{and}\;
  \begin{pmatrix} 0 & a & 1\! -\nts\nts a \\
    0 & 1 & 0 \\ 0 & 0 & 1 \end{pmatrix}
\]
are Markov idempotents. They do not produce new $\{ 0,1 \}$
matrices. Also, for $0<a<1$, they are not of equal-input form, not
even blockwise, which means that more complicated cases do occur. We
leave it as an exercise to the interested reader to verify that we
have covered all cases for $d=3$, and to analyse them further.  \exend
\end{example}

{At this point, it seems worthwhile to recall the structure of general
  Markov idempotents, where we follow \cite[Sec.~1.6]{HM}.  Given any
  idempotent $M\in\cM_d$ of rank $r$, where we must have
  $1\leqslant r \leqslant d$, there is a partition of
  $[d] = \{ 1, 2, \ldots, d \}$ of the form
\begin{equation}\label{eq:part-d}
     [d] \, = \, Z \,{\dot\cup}\, K^{}_{1} 
     \,{\dot\cup}\, \ldots \, {\dot\cup} \, K^{}_{r}
\end{equation}
with $Z = \{ \ell : \text{the $\ell\ts$th column of $M$ is } 0
\}$. Without any further specification of the $K_i$, we know from the
definition of $Z$ that the matrix elements of $M$ satisfy
$m^{}_{ij} = 0$ for all $i\in [d]$ and every $j\in Z$.}  {Further,
given some subset $K \subseteq [d]$, we follow standard notation and
use $M\big|_{K\times K}$ for the restriction of $M$ to indices from
$K$.}  {Now, we can reformulate \cite[Thm.~1.16]{HM} as follows.

\begin{theorem}\label{thm:idem}
   Let\/ $M = (m^{}_{ij})^{}_{1\leqslant i,j \leqslant d}\in \cM_d$ be an 
   idempotent of rank\/ $r$. Then, there is a partition of\/ 
   $[d]$ as in \eqref{eq:part-d} with the following properties.
 \begin{enumerate}\itemsep=2pt
 \item For all\/ $s\in [r]$, one has\/ $m^{}_{ij} = 0$ for all\/
     $i\in K_s$ and\/ $j \in [d]\setminus K_s$.
 \item For all\/ $s\in[r]$, the restriction\/ $M{\big |}_{K_s \times K_s}$
     is a Markov matrix with equal, positive rows.
 \item For all\/ $i \in Z$ and\/ $s\in[r]$, and then
     every\/ $k,\ell \in K_s$, one has\/
     $ m^{}_{ik} m^{}_{k \ell} = m^{}_{i \ell} m^{}_{kk}$, where\/
     $m^{}_{kk} m^{}_{k\ell} \ne 0$ due to $(2)$.
 \end{enumerate}  
 Conversely, every Markov matrix\/ $M\in\cM_d$ with a partition 
 of\/ $[d]$ as in \eqref{eq:part-d} with
 these three properties is an idempotent.  \qed
\end{theorem}
}

%
%
%


{Specialising this classification result to $\{ 0, 1 \}$ Markov
matrices gives the following consequence, the explicit derivation of
which we leave to the interested reader.} {It can also be derived
directly by a careful analysis of $\{ 0, 1\}$ Markov matrices.}

\begin{coro}
  After possibly performing an appropriate state permutation, any
  idempotent\/ $\{ 0,1 \}$ Markov matrix appears in a block form where
  each block is an equal-input matrix with a single column of\/ $1$s.
  \qed
\end{coro}

Simple examples, {with an indication of the blocks
according to Theorem~\ref{thm:idem}{\ts}(2),}
 include
\[
  \begin{pmatrix} 
  \boxed{1} & 0 & 0 \\ 1 & 0 & 0 \\
    0 & 0 & \boxed{1} \end{pmatrix} , \quad
   \begin{pmatrix} 0 & 1 & 0 \\ 0 & \boxed{1} & 0 \\
    0 & 0 & \boxed{1} \end{pmatrix} 
    \quad\text{or}\quad
  \begin{pmatrix} \boxed{1} & 0 & 0 & 0 \\ 0 & 0 & 1 & 0 \\
    0 & 0 & \boxed{1} & 0 \\ 1 & 0 & 0 & 0 \end{pmatrix}
  \sim \begin{pmatrix} 1 & 0 & 0 & 0 \\ 1 & 0 & 0 & 0 \\
    0 & 0 & 1 & 0 \\ 0 & 0 & 1 & 0 \end{pmatrix},
\]
where the similarity in the third case is under the obvious
state permuation. On the other hand, the matrices
\[
  \begin{pmatrix} 1 & 0 & 0 \\ 1 & 0 & 0 \\
    0 & 1 & 0 \end{pmatrix} \quad\text{and}\quad
  \begin{pmatrix}1 & 0 & 0 & 0 \\ 1 & 0 & 0 & 0 \\
    0 & 1 & 0 & 0 \\ 0 & 0 & 1 & 0 \end{pmatrix}
\]
fail to be idempotent, for instance.  Let us now analyse the second
class of matrices mentioned in the Introduction before we return to
this type of structure.

\section{Monotone Markov matrices and
  embeddability}\label{sec:monotone}

Let us begin this section with a formalisation of some of our previous
recollections. To this end, we follow \cite{KK} and employ the
lower-triangular matrix $T\in \Mat (d,\RR)$ given by
\begin{equation}\label{eq:T-def}
    T \, = \, \begin{pmatrix} 
    1 &  & \nts\bs{0}\ts \\
    \vdots & \ddots \\
    1 & \cdots & 1 \end{pmatrix} ,
\end{equation}
together with its inverse, $T^{-1} \nts$, which has entries $1$ on the
diagonal, $-1$ on the first subdiagonal, and $0$ everywhere else. With
$T$ and vectors $x,y \in \PP_d$, one has the equivalence
\begin{equation}\label{eq:stoch-ord}
     x \, \preccurlyeq  \, y \quad \Longleftrightarrow \quad
     x \ts T \, \leqslant \, y \ts T ,
\end{equation}
where the inequality on the right means that it is satisfied
element-wise. 

\subsection{Monotone Markov matrices}
If $E_{(i, j)} \in \Mat (d,\RR)$ denotes the elementary matrix with a
single $1$ in position $(i,j)$ and $0$ everywhere else, which results
in
\begin{equation}\label{eq:elem-matrix}
  E^{}_{(k, \ell)} \ts E^{}_{(m, n)} \, = \,
  \delta^{}_{\ell, m} \ts E^{}_{(k, n)} \ts ,
\end{equation}
one obtains the relations
\[
    E_{(i, j)} T \, = \, E_{(i, 1)} \nts + \ldots + E_{(i, j)}
    \quad \text{and} \quad T^{-1} E_{(i, j)} \, = \,
    E_{(i, j)} - E_{(i+1,j)} \ts ,
\]
where $E_{(d+1,j)} \defeq \nix$. Further, we call a column vector
$v = (v^{}_{1}, \ldots , v^{}_{d})^T$ \emph{non-decreasing} if
$v_i \leqslant v_{i+1}$ holds for all
$i \in [d \nts - \! 1]$. Now, we can characterise
monotone matrices as follows.

\begin{fact}\label{fact:monotone}
  For a Markov matrix\/ $M\in \cM_d$, the following statements are
  equivalent.
\begin{enumerate}\itemsep=2pt
\item The matrix\/ $M$ is monotone.
\item The mapping\/ $x\mapsto xM$ preserves the partial order\/
  $\preccurlyeq$ on the positive cone.
\item One has\/ $T^{-1} \nts M \ts T \geqslant \nix$, understood
  element-wise, with\/ $T$ as in Eq.~\eqref{eq:T-def}.
\item Whenever\/ $v$ is a non-decreasing vector, $Mv$ is also
  non-decreasing.
\end{enumerate}   
Further, the same equivalences hold for any non-negative\/
$B \in \Mat (d,\RR)$ with equal row sums.
\end{fact}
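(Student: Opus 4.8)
The plan is to establish the cycle of implications $(1)\Leftrightarrow(2)$, $(2)\Leftrightarrow(3)$, $(3)\Leftrightarrow(4)$, working throughout with a non-negative $B$ having all row sums equal to some constant $s\geqslant 0$; the Markov case is just $s=1$. The statement $(1)\Leftrightarrow(2)$ is essentially the definition of monotonicity together with the observation already recorded in the Introduction, namely that the partial order $\preccurlyeq$ extends consistently to the whole positive cone (level set by level set), and that a non-negative matrix with equal row sums maps each level set $\lambda\ts\PP_d$ into the level set $(\lambda s)\ts\PP_d$; so preserving $\preccurlyeq$ on $\PP_d$ and preserving it on the cone are equivalent by rescaling. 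I would spend only a sentence or two here.

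The heart of the argument is $(2)\Leftrightarrow(3)$. Using the equivalence \eqref{eq:stoch-ord}, for row vectors $x,y$ in a common level set one has $x\preccurlyeq y$ iff $xT\leqslant yT$ componentwise, and likewise $xB\preccurlyeq yB$ iff $xBT\leqslant yBT$. Writing $u=xT$, $w=yT$, the hypothesis $x\preccurlyeq y$ becomes $u\leqslant w$, and the conclusion $xB\preccurlyeq yB$ becomes $uT^{-1}BT\leqslant wT^{-1}BT$. Thus $B$ preserves $\preccurlyeq$ on the cone iff $u\mapsto u(T^{-1}BT)$ is monotone in the componentwise order on the relevant set of vectors $u$. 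Now I would invoke the standard elementary fact that a real matrix $A$ satisfies $uA\leqslant wA$ (componentwise) for all $u\leqslant w$ in the appropriate domain if and only if $A\geqslant\nix$ entrywise: one direction is obvious by linearity and non-negativity; for the converse, take $u=w-\varepsilon\ts e_i$ to isolate a single row of $A$ and conclude each entry of that row is $\geqslant 0$. Here one must check that the vectors arising as $u=xT$, $w=yT$ with $x,y$ in a common level set of the cone genuinely range over enough directions — but $T$ is invertible, so as $x,y$ run over the cone, $u,w$ run over a full-dimensional cone, and the differences $w-u$ span all of $\RR^d$; hence the test vectors $e_i$ are available (up to positive scaling and translation into the cone, which does not affect the componentwise inequality by linearity). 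This matching of domains is the one place where I expect to need a little care rather than a one-line appeal.

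For $(3)\Leftrightarrow(4)$ I would use the transpose/column picture. A column vector $v$ is non-decreasing exactly when $v=Tw$ for some column vector $w$ with $w_2,\dots,w_d\geqslant 0$ (and $w_1$ arbitrary), since $T^{-1}$ is the first-difference operator; equivalently $T^{-1}v\geqslant\nix$ except possibly in its first coordinate. Then $Bv$ is non-decreasing iff $T^{-1}Bv$ has non-negative entries from the second on, i.e. iff $(T^{-1}BT)(T^{-1}v)$ does. So $(4)$ says precisely that the matrix $A\defeq T^{-1}BT$ maps $\{w: w_2,\dots,w_d\geqslant 0\}$ into itself. Again by the elementary non-negativity criterion (now applied column-wise, probing with $w=e_j$ for $j\geqslant 2$ and letting $w_1\to\pm\infty$ to kill the first-coordinate freedom), this holds iff all entries of $A$ in rows $2,\dots,d$ are $\geqslant 0$; and the equal-row-sum condition on $B$ forces the first row of $A=T^{-1}BT$ to be non-negative as well — indeed the first row of $T^{-1}BT$ is $e_1^T T^{-1}BT$, and since $e_1^T T^{-1}=e_1^T$ while $BT\geqslant\nix$, this first row is automatically $\geqslant\nix$. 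Hence $(4)\Leftrightarrow A\geqslant\nix\Leftrightarrow(3)$.

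The main obstacle, as flagged above, is purely bookkeeping: making sure the "domain of test vectors" in the elementary monotonicity criterion is large enough in each of the two pictures (row picture for $(2)\Leftrightarrow(3)$, column picture for $(3)\Leftrightarrow(4)$), and handling the extra unconstrained first coordinate that appears because $\preccurlyeq$ only compares vectors within a level set while $T^{-1}$ has a nontrivial kernel-direction on constants. Once one notes that translating by a fixed vector and scaling by a positive constant leave all componentwise inequalities intact, these issues evaporate, and the equal-row-sum hypothesis is exactly what is needed to promote the conclusion from "rows $2,\dots,d$ of $T^{-1}BT$ are non-negative" to "all of $T^{-1}BT$ is non-negative." This is why the statement holds verbatim for every non-negative $B$ with equal row sums, not just for Markov matrices.
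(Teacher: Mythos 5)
Your argument is correct in substance but takes a genuinely different route from the paper: the paper settles $(2)\Leftrightarrow(3)\Leftrightarrow(4)$ by citing \cite[Thm.~1.1]{KK} and handles general $B$ by rescaling to the Markov matrix $\frac{1}{b}B$, whereas you give a self-contained conjugation argument with $T$. The self-contained version is worth having because it shows exactly where each hypothesis ($B\geqslant\nix$ versus equal row sums) enters; the price is some bookkeeping that the citation hides, and two of your intermediate claims are not literally correct as stated, though your own closing remarks essentially contain the repairs.

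First, in $(2)\Leftrightarrow(3)$ the differences $w-u=(y-x)\ts T$ do \emph{not} span all of $\RR^d$: since $x$ and $y$ must lie in a common level set, the first coordinate of $(y-x)\ts T$ is $\sum_i (y_i-x_i)=0$, so the available test vectors are only (positive multiples of) $e_2,\dots,e_d$, obtained from $y-x=\varepsilon\ts(e_i-e_{i-1})$, and condition (2) yields non-negativity of rows $2,\dots,d$ of $T^{-1}BT$ only. This is harmless because, as you observe, the first row of $T^{-1}BT$ equals the first row of $BT$ and is non-negative from $B\geqslant\nix$ alone (not from the equal-row-sum hypothesis, as your final paragraph suggests). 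Second, in $(3)\Rightarrow(4)$ the assertion that non-negativity of rows $2,\dots,d$ of $A=T^{-1}BT$ makes $A$ map $\{w : w_2,\dots,w_d\geqslant 0\}$ into itself is false for a general matrix, because $w_1=v_1$ may be negative. Here the equal row sums are indispensable: they give $A\ts e_1=T^{-1}B\ts T e_1=T^{-1}B\ts(1,\dots,1)^T=b\, T^{-1}(1,\dots,1)^T=b\ts e_1$, so $A_{i1}=0$ for $i\geqslant 2$ and the coordinates $(Aw)_i$ with $i\geqslant 2$ do not see $w_1$ at all; equivalently, $B$ maps constant vectors to constant vectors, so one may translate $v$ by $-v_1(1,\dots,1)^T$ and assume $w_1=0$. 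With these two points written out, your proof is complete.
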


\begin{proof}
  By our definition, (1) is equivalent to preserving $\preccurlyeq$ on
  $\PP_d$, which clearly extends to all level sets $\alpha \ts\PP_d$
  with $\alpha>0$, so (1) $\Longleftrightarrow$ (2) is clear. The
  equivalences (2) $\Longleftrightarrow$ (3) $\Longleftrightarrow$ (4)
  are now an immediate consequence of \cite[Thm.~1.1]{KK}.
  
  The case $B=\nix$ is trivial. When $B\ne \nix$, with elements
  $b_{ij}$, is a non-negative matrix with equal row sums, meaning that
  $\sum_{j=1}^{d} b_{ij} = b > 0$ for all $i \in [d\ts ]$, the matrix
  $M=\frac{1}{b}\ts B$ is Markov, and the final claim follows from the
  compatibility of the partial order on the positive cone with scaling
  by $b$ and the fact that the conditions in (3) and (4) are linear in
  $M$.
\end{proof}

\begin{example}\label{ex:mon-con}
  Let $M= (m^{}_{ij})^{}_{1\leqslant i,j \leqslant d}$ be Markov. When
  $d=2$, the non-negativity of $T^{-1} \nts M \ts T$ is equivalent to
  the single condition $\tr (M) \geqslant 1$, alternatively to
\[
     m^{}_{22} \, \geqslant \, m^{}_{12} \ts .
\]  
Likewise, for $d=3$, the original monotonicity condition for $M$, or
equivalently the non-negativity condition for $T^{-1} \nts M \ts T$,
boils down to
\[
   m^{}_{33} \, \geqslant \, m^{}_{23} \, \geqslant \, m^{}_{13}
   \quad \text{and} \quad 
   m^{}_{11} \, \geqslant \, m^{}_{21} \, \geqslant \, m^{}_{31} \ts ,
\]  
   where it was used that all rows of $M$ sum to $1$.
\exend
\end{example}

Let $E^{}_{\ell_1 \nts , \ldots, \ts \ell_d}$ denote the $\{ 0,1 \}$
Markov matrix with the row vector $e^{}_{\ell_i}$ as row $i$, for
$i \in [d\ts ]$, so
$E^{}_{\ell_1 \nts , \ldots, \ts \ell_d} = E^{}_{(1,\ts \ell_1)} +
\ldots + E^{}_{(d, \ts \ell_d)}$.  There exist $d^{\ts d}$ such
matrices, which are the extremal elements of $\cM_d$. Since
$e_i \preccurlyeq e_j$ if and only if $i\leqslant j$, it is clear that
$E^{}_{\ell_1 \nts , \ldots, \ts \ell_d}$ is monotone precisely when
$\ell_1 \leqslant \ell_2 \leqslant \dots \leqslant \ell_d$.  Using
\eqref{eq:elem-matrix}, or alternatively tracing the images of the
basis vectors $e_i$, one verifies the multiplication rule
\begin{equation}\label{eq:mult-E}
   E^{}_{k_1 \nts , \ldots, \ts k_d} \ts E^{}_{\ell_1 \nts , \ldots, \ts \ell_d}
   \, = \, E^{}_{\ell_{k_1} \nts \nts , \ldots, \ts \ell_{k_d}} .
\end{equation}
Due to the existence of singular idempotents among these matrices 
(when $d\geqslant 2$), one thus obtains the following 
{simple, but helpful} structure result.

\begin{fact}\label{fact:group}
  For\/ $d\geqslant 2$, the set of\/ $\{ 0, 1\}$ Markov matrices,
  under matrix multiplication, is a monoid, but not a group. The same
  property holds for the subset of monotone\/ $\{ 0, 1\}$ matrices.
  
  A\/ $\{ 0,1 \}$ Markov matrix in\/ $\cM_d$ is non-singular if and
  only if it is a permutation matrix. The subset of the\/ $d\ts !$
  permutation matrices is isomorphic with the symmetric group\/ $S_d$.
  \qed
\end{fact}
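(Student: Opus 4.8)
The plan is to verify each assertion directly from the multiplication rule \eqref{eq:mult-E}, together with the observation about singular idempotents. First I would check closure: given two $\{0,1\}$ Markov matrices $E^{}_{k_1,\ldots,k_d}$ and $E^{}_{\ell_1,\ldots,\ell_d}$, formula \eqref{eq:mult-E} exhibits their product as $E^{}_{\ell_{k_1},\ldots,\ell_{k_d}}$, which is again a $\{0,1\}$ Markov matrix, since each $\ell_{k_i}\in\{1,\ldots,d\}$. Associativity is inherited from matrix multiplication, and $\one = E^{}_{1,2,\ldots,d}$ is the neutral element, so we have a monoid. To see it is not a group, note that any $E^{}_{\ell_1,\ldots,\ell_d}$ whose index list is not a permutation of $\{1,\ldots,d\}$ has a repeated value and hence a repeated row, so it is singular (rank $<d$) and cannot be invertible; such matrices exist for $d\geqslant 2$. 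For the monotone subset, \eqref{eq:mult-E} shows that if $k_1\leqslant\cdots\leqslant k_d$ and $\ell_1\leqslant\cdots\leqslant\ell_d$, then $\ell_{k_1}\leqslant\cdots\leqslant\ell_{k_d}$ because the $k_i$ are non-decreasing and $\ell$ is a non-decreasing reindexing; thus the product is again monotone, giving a submonoid. It is not a group since, e.g., $E_1$ (all rows equal to $e_1$) is a singular monotone idempotent for $d\geqslant 2$.

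Next I would treat the non-singularity claim. If $E^{}_{\ell_1,\ldots,\ell_d}$ is non-singular, then its rows $e^{}_{\ell_1},\ldots,e^{}_{\ell_d}$ are linearly independent, which forces the $\ell_i$ to be pairwise distinct, i.e.\ $(\ell_1,\ldots,\ell_d)$ is a permutation of $(1,\ldots,d)$; conversely, a permutation matrix is non-singular. So the non-singular $\{0,1\}$ Markov matrices are exactly the $d\ts!$ permutation matrices.

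Finally I would identify the group structure of this subset. Writing a permutation matrix as $P^{}_\sigma \defeq E^{}_{\sigma(1),\ldots,\sigma(d)}$ for $\sigma\in S_d$, formula \eqref{eq:mult-E} gives $P^{}_\sigma P^{}_\tau = E^{}_{\tau(\sigma(1)),\ldots,\tau(\sigma(d))} = P^{}_{\tau\circ\sigma}$, so $\sigma\mapsto P^{}_\sigma$ is an (anti-)homomorphism; composing with inversion $\sigma\mapsto\sigma^{-1}$, or simply using the opposite-group convention, yields a genuine group isomorphism between this set of $d\ts!$ matrices and $S_d$. Since the only possible obstacle — verifying that \eqref{eq:mult-E} behaves correctly under iterated indexing, including the edge cases with repeated indices — is already a routine substitution, there is no serious difficulty; the argument is a direct bookkeeping exercise built entirely on \eqref{eq:mult-E}.
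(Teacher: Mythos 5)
Your proposal is correct and follows essentially the same route as the paper, which states this Fact without a separate proof precisely because it is a direct consequence of the multiplication rule \eqref{eq:mult-E} together with the existence of singular idempotents such as $E_1$; your write-up simply makes that bookkeeping explicit (including the correct observation that $\sigma\mapsto P_\sigma$ is an anti-homomorphism, remedied by composing with inversion).
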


Now, we turn to {the convexity structure of $\mon$. While this
is certainly known, we are not aware of a source with a proof, whence
 we include one} for convenience.

\begin{lemma}\label{lem:convex}
  The set\/ $\mon$ is convex. It has\/ $\binom{2d-1}{d}$ extremal
  points, which are the monotone Markov matrices with entries in\/
  $\{ 0,1 \}$, that is, the\/
  $E^{}_{\ell_1 \nts , \ldots, \ts \ell_d}$ with\/
  $1\leqslant \ell_1 \nts \leqslant \ell_2 \leqslant \dots \leqslant
  \ell_d \leqslant d \ts$.
\end{lemma}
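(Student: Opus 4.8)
The statement has two parts: convexity of $\mon$, and the identification of its extremal points. Convexity is immediate: by Fact~\ref{fact:monotone}(3), $M\in\mon$ iff $T^{-1}MT\geqslant\nix$, and this condition, together with the Markov condition (non-negativity and row sums $1$), is preserved under convex combinations since it is linear in $M$. So the work is entirely in the extremal-point claim.

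The plan is to show that $\mon$ is the convex hull of the set $S$ of monotone $\{0,1\}$-matrices $E^{}_{\ell_1,\ldots,\ell_d}$ with $1\leqslant\ell_1\leqslant\cdots\leqslant\ell_d\leqslant d$, and then that each element of $S$ is genuinely extremal. For the first half, given $M=(m^{}_{ij})\in\mon$, I would write $M$ row by row using the monotonicity structure: the rows $m^{}_1\preccurlyeq m^{}_2\preccurlyeq\cdots\preccurlyeq m^{}_d$ form a $\preccurlyeq$-chain in $\PP_d$. Passing through the coordinates $y=xT$ (so that $\preccurlyeq$ becomes the coordinatewise order, by \eqref{eq:stoch-ord}), the rows of $MT$ have non-negative entries (because $M$ is Markov and the partial sums from the right are non-negative), last column all equal to $1$, and are coordinatewise non-decreasing down the rows — exactly the non-negativity of $T^{-1}MT$ read cumulatively. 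The cleanest route is to induct on the number of nonzero entries of $T^{-1}MT$, or to argue directly: each $E^{}_{\ell_1,\ldots,\ell_d}$ corresponds, in $T$-coordinates, to a $\{0,1\}$ matrix with a non-decreasing "staircase" column pattern, and any non-negative matrix with that staircase monotonicity and all row sums $1$ is manifestly a convex combination of such staircases (this is the standard fact that a monotone $\{$row-stochastic$\}$ matrix is a mixture of monotone deterministic maps; one can cite \cite{KK} or give the short inductive peeling argument, subtracting off $\min_i$ the appropriate extremal staircase). This expresses $M$ as a convex combination of elements of $S$, so $\mon=\mathrm{conv}(S)$, whence by Krein--Milman \cite[Thm.~2.6.16]{Web} the extremal points of $\mon$ are a subset of $S$.

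For the converse — each $E\in S$ is extremal — suppose $E=\alpha M'+(1-\alpha)M''$ with $\alpha\in(0,1)$ and $M',M''\in\mon$. Since the entries of $E$ are $0$ or $1$ and those of $M',M''$ lie in $[0,1]$, a $0$ entry of $E$ forces the corresponding entries of $M'$ and $M''$ to vanish, and a $1$ entry forces them to equal $1$; hence $M'=M''=E$. This shows every member of $S$ is extremal, so the extremal set is exactly $S$. Finally, $|S|$ is the number of weakly increasing sequences $1\leqslant\ell_1\leqslant\cdots\leqslant\ell_d\leqslant d$, which is $\binom{d+d-1}{d}=\binom{2d-1}{d}$ by the standard stars-and-bars count.

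The only genuinely non-routine step is the combinatorial decomposition $M\in\mon\implies M\in\mathrm{conv}(S)$; I would present it as a short induction on $\#\{(i,j): (T^{-1}MT)_{ij}\neq 0\}$ or on the number of "breakpoints" in the staircase, peeling off $\lambda E$ for a suitable extremal staircase $E$ and scalar $\lambda>0$ so that $M-\lambda E$ (rescaled) is again monotone with fewer breakpoints, terminating at a single staircase. Everything else — convexity, the $0/1$-forcing argument for extremality, and the binomial count — is immediate.
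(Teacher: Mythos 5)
Your proposal is correct and takes essentially the same route as the paper: convexity via the linear criterion of Fact~\ref{fact:monotone}{\ts}(3), a greedy peeling of extremal staircases to show that $\mon$ is the convex hull of the monotone $\{0,1\}$ Markov matrices, a direct extremality check (your $0/1$-forcing argument is in fact a bit more explicit than the paper's), and the stars-and-bars count $\binom{2d-1}{d}$. The one step you leave as a sketch --- which staircase to subtract and why the process terminates --- is exactly the paper's row-echelon greedy algorithm, so nothing is missing in substance.
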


\begin{proof}
  Convexity is clear, for instance via
  Fact~\ref{fact:monotone}{\ts}(3).  Consequently, any convex
  combination of monotone $\{ 0,1 \}$ Markov matrices must lie in
  $\mon$. Thus, we first have to show that such convex combinations
  exhaust $\mon$. This follows from a greedy algorithm that is based
  on the following reduction argument.

  Consider a non-negative matrix $B\ne \nix$ with equal row sums, $b$
  say, where $b>0$. Assume that $B$ is monotone. Such a matrix, due to
  the monotonicity condition, appears in a (non-reduced) row-echelon
  form. This is captured in a set of integer pairs
  $\bigl( (i^{}_{1}, j^{}_{1}), \ldots , (i^{}_{r}, j^{}_{r}) \bigr)$,
  where $j^{}_{1}$ is the position of the first (or left-most)
  non-zero column of $B$, with $i^{}_{1}$ the lowest position of a
  positive element in it, $j^{}_{2}$ then is the left-most position of
  a column that is non-zero below row $i^{}_{1}$, with $i^{}_{2}$ the
  lowest position of a positive element in column $j^{}_{2}$, and so
  on.  Clearly, $r\geqslant 1$ and $i^{}_{r} = d$ due to $b>0$ in
  conjunction with $B$ being monotone.

For instance, $B$ may have the row-echelon form
\[
   B \, = \, 
      \left(\rule[0pt]{0pt}{48pt}
      \begin{array}{@{\,}cccccc@{\:}}
      0  & \multicolumn{1}{|c}{ *} & \cdot & \cdot & \cdot & \cdot \\
      0 & \multicolumn{1}{|c}{*}  & \cdot & \cdot & \cdot & \cdot \\
      0 & \multicolumn{1}{|c}{\bullet} & \cdot & \cdot & \cdot & \cdot\\
      \cline{2-2}
      0 & 0 &\multicolumn{1}{|c}{\bullet} & \cdot & \cdot & \cdot \\
      \cline{3-4}
      0 & 0 & 0 & 0 & \multicolumn{1}{|c}{*}  & \cdot \\
      0 & 0 & 0 & 0 & \multicolumn{1}{|c}{\bullet} & \cdot
      \end{array}\right)  , 
\]
here with $r=3$ and integer pairs $\bigl( (3,2), (4,3), (6,5)\bigr)$.
A symbol $\bullet$ marks the lowest positive element in a column, and
$*$ any element in the same column (above $\bullet$) that cannot be
smaller (as a consequence of monotonicity). In each row, there is thus
either one $*$ or one $\bullet$ by this rule. The total number of
symbols of type $\bullet$ or $*$ is $d$, so $6$ in this particular
case. To the left of them, all elements are $0$, while the remaining
elements of $B$ are left unspecified, as they play no role at this
stage.

Now, let $\alpha>0$ be the minimal element in the $\bullet$ positions,
which are the $(i^{}_{k}, j^{}_{k})$, and let $E$ denote the matrix
that has a $1$ in every $\bullet$ and in every $*$ position and a $0$
anywhere else, which obviously is a monotone $\{ 0,1 \}$ Markov
matrix, namely the $E^{}_{\ell_1 \nts , \ldots, \ts \ell_d}$ where
$\ell_i$ is the unique position of $*$ or $\bullet$ in row $i$, for
$i \in [d\ts ]$.  Now, set $B' = B - \alpha E$, which is still
monotone and has constant row sums
$b^{\ts\prime} = b -\alpha \geqslant 0$, but one $\bullet$ is now
replaced by a $0$, which means that this $\bullet$ is gone or has
moved up or right (or both) in the matrix. Unless $B'=\nix$, we repeat
the procedure with the new row-echelon form, which terminates after
finitely many steps. The result is a decomposition of $B$ as a sum of
monotone $\{0,1 \}$ Markov matrices with positive weight factors. If
we start with $M\nts\nts$, which has equal row sums $b=1$, it is clear
that we end up with a convex combination.

No monotone $\{0,1\}$ Markov matrix can be written as a convex
combination of the other ones, so their extremality is clear. Now,
given $d$, the monotone $\{ 0,1 \}$ Markov matrices are in obvious
bijection with the possibilities to distribute $d$ indistinguishable
balls (the $1\ts$s) to $d$ distinguishable boxes (the columns of the
matrix), where an outcome $(n^{}_{1}, \ldots , n^{}_{d})$, with
$n^{}_{1} \nts + \ldots + n^{}_{d} = d$, parameterises the matrix $M$ with
the row vector $e^{}_{1}$ in the first $n^{}_{1}$ rows, then
$e^{}_{2}$ in the next $n^{}_{2}$ rows, and so on.  The total number
of possible outcomes is well known to be
$\binom{\ts 2d\ts -1}{d\ts -1}= \binom{\ts 2 d\ts -1}{d}$, as this is
the number of choices to place $d-\nts 1$ separating walls between the
$d$ balls on the altogether $2d- \nts 1$ positions; see
\cite[A{\ts}001{\ts\ts}700]{OEIS} for details.
\end{proof}

\begin{table}
\begin{tabular}{c|cc|cc|c}
$(\ell^{}_{1}, \ell^{}_{2}, \ell^{}_{3}) \rule[-6pt]{0pt}{10pt}$ & 
    $M$ & $\sigma (M)$ & $p$ & $q$ & $M^2$ \\  \hline
$(1,1,1)\rule[-9pt]{0pt}{25pt} $ &
   $\left(\begin{smallmatrix} 1 & 0 & 0 \\ 1 & 0 & 0 \\ 1 & 0 & 0
            \end{smallmatrix}\right)$ & $\{ 1,0,0 \}$   &   
    $x^2 (x-1)$ & $x \ts (x-1)$ & $(1,1,1)$  \\
$(1,1,2)\rule[-9pt]{0pt}{24pt} $ &
   $\left(\begin{smallmatrix} 1 & 0 & 0 \\ 1 & 0 & 0 \\ 0 & 1 & 0
            \end{smallmatrix}\right)$ & $\{ 1,0,0 \}$   &       
    $x^2 (x-1)$ & $p(x)$ & $(1,1,1)$  \\
$(1,1,3)\rule[-9pt]{0pt}{24pt} $ &
   $\left(\begin{smallmatrix} 1 & 0 & 0 \\ 1 & 0 & 0 \\ 0 & 0 & 1
            \end{smallmatrix}\right)$ & $\{ 1,1,0 \}$    &     
    $x \ts (x-1)^2$ & $x \ts (x-1)$ & $(1,1,3)$  \\
$(1,2,2)\rule[-9pt]{0pt}{24pt} $ &
   $\left(\begin{smallmatrix} 1 & 0 & 0 \\ 0 & 1 & 0 \\ 0 & 1 & 0
            \end{smallmatrix}\right)$ & $\{ 1,1,0 \}$    &       
    $x \ts (x-1)^2$ & $x \ts (x-1)$ & $(1,2,2)$  \\
$(1,2,3)\rule[-9pt]{0pt}{24pt} $ &
   $\left(\begin{smallmatrix} 1 & 0 & 0 \\ 0 & 1 & 0 \\ 0 & 0 & 1
            \end{smallmatrix}\right)$ & $\{ 1,1,1 \}$     &     
   $(x-1)^3$ & $x-1$ & $(1,2,3)$  \\
$(1,3,3)\rule[-9pt]{0pt}{24pt} $ &
   $\left(\begin{smallmatrix} 1 & 0 & 0 \\ 0 & 0 & 1 \\ 0 & 0 & 1
            \end{smallmatrix}\right)$ & $\{ 1,1,0 \}$      &   
   $x \ts (x-1)^2$ & $x \ts (x-1)$ & $(1,3,3)$   \\
$(2,2,2)\rule[-9pt]{0pt}{24pt} $ &
   $\left(\begin{smallmatrix} 0 & 1 & 0 \\ 0 & 1 & 0 \\ 0 & 1 & 0
            \end{smallmatrix}\right)$ & $\{ 1,0,0 \}$       &     
   $x^2 (x-1)$ & $x \ts (x-1)$ & $(2,2,2)$  \\
$(2,2,3)\rule[-9pt]{0pt}{24pt} $ &
   $\left(\begin{smallmatrix} 0 & 1 & 0 \\ 0 & 1 & 0 \\ 0 & 0 & 1
            \end{smallmatrix}\right)$ & $\{ 1,1,0 \}$       &   
   $x \ts (x-1)^2$ & $x \ts (x-1)$ & $(2,2,3)$  \\
$(2,3,3)\rule[-9pt]{0pt}{24pt} $ &
   $\left(\begin{smallmatrix} 0 & 1 & 0 \\ 0 & 0 & 1 \\ 0 & 0 & 1
            \end{smallmatrix}\right)$ & $\{ 1,0,0 \}$        &  
   $x^2 (x-1)$ & $p(x)$ & $(3,3,3)$  \\
$(3,3,3)\rule[-9pt]{0pt}{24pt} $ &
   $\left(\begin{smallmatrix} 0 & 0 & 1 \\ 0 & 0 & 1 \\ 0 & 0 & 1
            \end{smallmatrix}\right)$ & $\{ 1,0,0 \}$     & 
    $x^2 (x-1)$ & $x \ts (x-1)$ & $(3,3,3)$  
\end{tabular}\bigskip

\caption{The $10$ extremal elements
  $E^{}_{\ell_1 \nts , \ts \ell_2 , \ts \ell_3}$ of
  $\cM^{}_{3,\preccurlyeq}$ with some of their properties. Here,
  $\sigma(M)$ is the spectrum of $M$ with multiplicities, while $p$
  and $q$ are, up to an overall sign, the characteristic and the
  minimal polynomial of $M$. Note that $p\ne q$ precisely when $M$ is
  an idempotent. The last column gives $M^2$ in terms of its index
  parameters.}
\label{tab:extremal}   
\end{table}

The case $d=3$ is summarised in Table~\ref{tab:extremal}.  By
Lemma~\ref{lem:convex}, every monotone Markov matrix $M\in\cM_d$ can
be expressed as a convex combination of the form
\begin{equation}\label{eq:mon-conv}
    M \, = \sum_{1\leqslant  \ell_{1} \leqslant \cdots
       \leqslant \ell_{\nts d} \leqslant d }
     \! \alpha^{}_{\ell_{1} \nts , \ldots, \ts \ell_{d}} \,
     E^{}_{\ell_{1} \nts , \ldots, \ts \ell_{d}} 
\end{equation}
with all coefficients
$\alpha^{}_{\ell_{1} \nts , \ldots, \ts \ell_{d}}\geqslant 0$, their
sum being $1$, and $E^{}_{\ell_{1} \nts , \ldots, \ts \ell_{d}}$ as
above.  Observing that
$\ts \tr \bigl( E^{}_{\ell_{1} \nts , \ldots, \ts \ell_{d}} \bigr)
\geqslant 1$ whenever
$\ell_1 \nts \leqslant \ell_2 \leqslant \dots \leqslant \ell_d$, one
finds from \eqref{eq:mon-conv} that $\tr (M) \geqslant 1$ holds for
all monotone Markov matrices, which easily generalises as follows.

\begin{coro}\label{coro:trace}
  Let\/ $B \in \Mat (d,\RR)$ be a non-negative matrix with equal row
  sums, $b\geqslant 0$.  If\/ $B$ is also monotone, one has\/
  $\tr (B) \geqslant b$.  \qed
\end{coro}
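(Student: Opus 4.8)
The plan is to reduce the statement to the Markov case already recorded in the paragraph preceding the corollary. If $b=0$, then $B=\nix$ and there is nothing to prove, so assume $b>0$. Then $M \defeq \frac{1}{b} B$ is a monotone Markov matrix by the final assertion of Fact~\ref{fact:monotone}, and $\tr (B) = b \ts \tr (M)$, so it suffices to establish $\tr (M) \geqslant 1$ for every $M \in \mon$.

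For this, I would first isolate the underlying combinatorial fact: for any index vector with $1 \leqslant \ell_1 \leqslant \ell_2 \leqslant \cdots \leqslant \ell_d \leqslant d$, there exists some $i$ with $\ell_i = i$, whence $\tr\bigl(E_{\ell_1,\ldots,\ell_d}\bigr) = \#\{ i : \ell_i = i \} \geqslant 1$. To see the fact, let $k$ be the largest index with $\ell_k \geqslant k$; such $k$ exists because $\ell_1 \geqslant 1$. If one had $\ell_k > k$, then $k < d$ (since $\ell_d \leqslant d$), and $\ell_{k+1} \geqslant \ell_k \geqslant k+1$ would contradict the maximality of $k$; hence $\ell_k = k$. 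With this in hand, I would invoke Lemma~\ref{lem:convex} to write $M$ as a convex combination of the extremal matrices $E_{\ell_1,\ldots,\ell_d}$ as in \eqref{eq:mon-conv}, and conclude $\tr (M) \geqslant 1$ from the linearity of the trace, the bound $\tr\bigl(E_{\ell_1,\ldots,\ell_d}\bigr) \geqslant 1$ just established, and the fact that the coefficients sum to $1$.

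Alternatively, I would give a direct argument avoiding the extremal decomposition. Writing $S_i(k) = \sum_{j=k}^{d} b_{ij}$ for the tail sums of row $i$, the monotonicity condition $m_i \preccurlyeq m_j$ for $i \leqslant j$ reads $S_i(k) \leqslant S_j(k)$ for all $k$, while $S_i(1) = b$ and $S_i(d+1) = 0$ for all $i$. Since $b_{ii} = S_i(i) - S_i(i+1)$, a telescoping rearrangement gives $\tr (B) = \sum_{i=1}^{d} \bigl(S_i(i) - S_i(i+1)\bigr) = S_1(1) - S_d(d+1) + \sum_{i=2}^{d} \bigl(S_i(i) - S_{i-1}(i)\bigr) = b + \sum_{i=2}^{d} \bigl(S_i(i) - S_{i-1}(i)\bigr) \geqslant b$, where every summand is non-negative by monotonicity. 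One could equally phrase this via Fact~\ref{fact:monotone}(3), using that $N \defeq T^{-1} B \ts T \geqslant \nix$ and $\tr (B) = \tr (N)$, but extracting the bound $b$ from the diagonal of $N$ amounts to the same bookkeeping.

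There is no real obstacle here; the corollary is genuinely a corollary. The only points requiring a little care are the trivial case $b=0$, the precise statement and proof of the discrete ``fixed point'' fact that $\ell_i = i$ for some $i$, and --- in the reduction --- the observation (supplied by the last sentence of Fact~\ref{fact:monotone}) that monotonicity is unaffected by the positive rescaling $B \mapsto \frac{1}{b} B$.
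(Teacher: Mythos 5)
Your first argument is exactly the paper's: the corollary is deduced from the convex decomposition \eqref{eq:mon-conv} of Lemma~\ref{lem:convex} together with the observation that $\tr\bigl(E_{\ell_1,\ldots,\ell_d}\bigr)\geqslant 1$ for non-decreasing index vectors, followed by the rescaling $B=b\ts M$; you merely spell out the small combinatorial ``fixed point'' fact ($\ell_i=i$ for some $i$) that the paper leaves implicit, and your proof of it is correct. Your alternative telescoping argument via the tail sums $S_i(k)$ is also correct and is genuinely more elementary: it bypasses the greedy extremal decomposition of Lemma~\ref{lem:convex} entirely and reads the bound $\tr(B)\geqslant b$ directly off the defining inequalities $S_{i-1}(i)\leqslant S_i(i)$, at the cost of not exhibiting the extremal structure that the paper wants on record anyway.
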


\subsection{Monotonicity and embeddability}

{As in \cite{BS}, we use $\cE_d$ to denote} the semigroup generated by
the embeddable Markov matrices of dimension $d$.  For $d=2$, every
element of $\cE_2$ is itself embeddable (so
$\cE^{}_{2} = \cM^{\mathrm{E}}_{2}$, which is no longer true for
$d\geqslant 3$), and the set of monotone Markov matrices agrees with
the closure of $\cE_2$. In fact, $\cM_{2,\preccurlyeq}$ is the closed
triangle in $\cM_2$ with the vertices $\one^{}_{2}$,
$E^{}_1=\left( \begin{smallmatrix} 1 & 0 \\ 1 &
    0 \end{smallmatrix}\right)$ and
$E^{}_2=\left( \begin{smallmatrix} 0 & 1 \\ 0 &
    1 \end{smallmatrix}\right)$.  Only the line
$\{ \alpha E^{}_1 + (1-\alpha) E^{}_2 : 0 \leqslant \alpha \leqslant 1
\}$ does not belong to $\cE_2$, because it consists of singular
idempotents. The only other idempotent in $\cM^{}_2$ is {$\one$,} the
trivial case. This leads to the following result.

\begin{prop}\label{prop:mon-2}
  An element\/ $M\in \cM^{}_2$ is monotone if and only if\/
  $\tr (M) \geqslant 1$. Thus, being monotone is equivalent to either
  being embeddable or being a non-trivial idempotent.
\end{prop}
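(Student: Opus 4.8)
The plan is to prove Proposition~\ref{prop:mon-2} in two short movements. First I would establish the characterisation of monotone matrices in $\cM_2$ via the trace. By Fact~\ref{fact:monotone}{\ts}(3), $M = \left(\begin{smallmatrix} 1-a & a \\ b & 1-b \end{smallmatrix}\right)$ is monotone if and only if $T^{-1} M \ts T \geqslant \nix$ element-wise; with $T = \left(\begin{smallmatrix} 1 & 0 \\ 1 & 1 \end{smallmatrix}\right)$ a direct computation gives $T^{-1} M \ts T = \left(\begin{smallmatrix} 1-a-b & a \\ 0 & 1 \end{smallmatrix}\right)$, so the only nontrivial condition is $1-a-b \geqslant 0$, i.e. $\tr(M) = 2-a-b \geqslant 1$. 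This is exactly the content already noted in Example~\ref{ex:mon-con} for $d=2$, so I would simply cite that computation rather than redo it.

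Next I would identify which such $M$ are embeddable and which are non-trivial idempotents, and argue these two possibilities together exhaust the monotone ones. By Lemma~\ref{lem:Kendall} (Kendall's theorem), $M\in\cM_2$ is embeddable precisely when $\det(M) = 1-a-b > 0$, i.e. $\tr(M) > 1$. The matrices with $\tr(M) = 1$, i.e. $a+b = 1$, satisfy $\det(M) = 0$; among these, $M = \one$ is excluded since $\tr(\one) = 2$, so each has eigenvalues $1$ and $0$, hence minimal polynomial $x(x-1)$ by Lemma~\ref{lem:idem} (the non-singular-idempotent dichotomy) combined with the observation in the Introduction that any idempotent $\ne\one$ in $\cM_d$ has minimal polynomial $x(x-1)$ — so these are exactly the non-trivial idempotents in $\cM_2$. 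Conversely, $M=\one$ has $\tr = 2 > 1$ and is the trivial idempotent. Thus $\{\tr(M)\geqslant 1\} = \{\tr(M) > 1\} \cup \{\tr(M) = 1\} = \{\text{embeddable}\} \cup \{\text{non-trivial idempotent}\}$, and the two cases are disjoint since embeddable matrices in $\cM_2$ are non-singular while non-trivial idempotents are singular. This gives the stated equivalence.

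I do not anticipate a genuine obstacle here: everything reduces to the $2\times 2$ computations already recorded in Example~\ref{ex:mon-con} and Lemma~\ref{lem:Kendall}, together with the elementary spectral dichotomy for idempotents. The only point requiring a line of care is checking that $\tr(M) = 1$ forces $M$ to be a \emph{non-trivial} idempotent and not merely a singular matrix — one must verify that $a+b=1$ implies $M^2 = M$, which follows because the characteristic polynomial is $x(x-1)$ and $M\ne\one$, so $M$ is diagonalisable with eigenvalues in $\{0,1\}$ and hence idempotent. Alternatively, one checks $M^2 = M$ directly from $a+b=1$ in one short calculation. I would phrase the write-up to lean on the earlier results so that the proof stays to a handful of lines.
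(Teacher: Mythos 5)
Your proof is correct and follows essentially the same route as the paper: the trace criterion via the computation recorded in Example~\ref{ex:mon-con} (equivalently Fact~\ref{fact:monotone}{\ts}(3)), Kendall's theorem (Lemma~\ref{lem:Kendall}) to identify the embeddable cases as those with $\det(M)>0$, i.e.\ $\tr(M)>1$, and the spectral dichotomy for idempotents to handle $\tr(M)=1$. The only blemish is that your displayed matrix $T^{-1}MT$ has its entries misplaced --- the correct product is $\left(\begin{smallmatrix} 1 & a \\ 0 & 1-a-b \end{smallmatrix}\right)$ --- but the resulting condition $1-a-b\geqslant 0$ is unchanged, so nothing in the argument is affected.
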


\begin{proof}
  Observe that $\tr (M) \leqslant 2$ holds for all $M\in \cM^{}_2$.
  Since
  $M=\left( \begin{smallmatrix} 1-a & a \\ b & 1-b \end{smallmatrix}
  \right)$ with\/ $a,b\in [ \ts 0,1]$ is monotone if and only if
  $1\geqslant a+b$, compare Example~\ref{ex:mon-con}, the first claim
  is immediate. By Lemma~\ref{lem:Kendall}, $\tr (M) = 1$ means $M$ is
  monotone, but not embeddable.

  For the second claim, {recall} that $M\in \cM^{}_2$ is an idempotent
  if and only if $M^2=M$, which implies
  $\sigma (M) \subseteq \{ 0,1\}$.  Since $\lambda = 1$ is always an
  eigenvalue, being an idempotent either means that the second
  eigenvalue is also $1$, hence $M=\one$ by Lemma~\ref{lem:idem}, or
  that $\det (M)=0$, which gives the line from $E^{}_1$ to $E^{}_2$
  discussed above.
\end{proof}

\begin{coro}\label{coro:div-2}
  Any\/ $M\in\cM^{}_{2,\preccurlyeq}$ is infinitely divisible within\/
  $\cM^{}_{2,\preccurlyeq}$. In fact, $M\in\cM^{}_{2}$ is infinitely
  divisible if and only if it is monotone.
\end{coro}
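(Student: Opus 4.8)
The plan is to prove both statements of Corollary~\ref{coro:div-2} by combining Proposition~\ref{prop:mon-2} with the $2\times 2$ embeddability theory from Lemma~\ref{lem:Kendall}. First I would dispose of the first assertion. Let $M\in\cM^{}_{2,\preccurlyeq}$ and fix $n\in\NN$. If $M=\one$, then $\sqrt[n]{M}=\one$ works trivially. If $M$ is embeddable, say $M=\ee^Q$ with a generator $Q$, then $\frac{1}{n}Q$ is again a generator, and $M^{}_n\defeq\ee^{Q/n}$ is embeddable, hence monotone by Proposition~\ref{prop:mon-2} (being embeddable implies monotone there), and $M^{n}_n=M$. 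The only remaining monotone matrices are the non-trivial idempotents on the segment from $E^{}_1$ to $E^{}_2$; but an idempotent $M$ satisfies $M^n=M$ for all $n\geqslant 1$, so $M$ is its own $n$-th root and lies in $\cM^{}_{2,\preccurlyeq}$. Thus every $M\in\cM^{}_{2,\preccurlyeq}$ has a monotone $n$-th root for all $n$, which is the first claim.

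Next I would handle the second assertion, the equivalence. The direction ``monotone $\Rightarrow$ infinitely divisible'' is immediate from the first part, since a monotone $n$-th root is in particular a Markov $n$-th root. For the converse, suppose $M\in\cM^{}_{2}$ is infinitely divisible, and write $M=\left(\begin{smallmatrix} 1-a & a \\ b & 1-b \end{smallmatrix}\right)$ with $a,b\in[\ts 0,1]$. By Proposition~\ref{prop:mon-2}, it suffices to show $\tr(M)\geqslant 1$, i.e.\ $a+b\leqslant 1$, equivalently $\det(M)=1-a-b\geqslant 0$. The key point is that for $2\times 2$ Markov matrices the determinant is the second eigenvalue, so $\det(M)\in[-1,1]$, and if $M$ has a Markov square root $N$ then $\det(M)=\det(N)^2\geqslant 0$. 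A Markov square root exists because $M$ is infinitely divisible (take $n=2$). Hence $\det(M)\geqslant 0$, so $a+b\leqslant 1$, so $\tr(M)\geqslant 1$, and $M$ is monotone by Proposition~\ref{prop:mon-2}.

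I do not expect a genuine obstacle here: the argument is a short assembly of already-established facts. The one place to be slightly careful is the converse direction, where one must invoke only the existence of a single Markov square root (not the full embedding), and note that $\det$ of a Markov $2\times 2$ matrix equals its non-Perron eigenvalue, so that squaring forces non-negativity. An alternative, equally clean route for the converse is to observe that the closure of $\cM^{\mathrm{E}}_{2}$ is exactly $\cM^{}_{2,\preccurlyeq}$ (as recalled before Proposition~\ref{prop:mon-2}), that the set of infinitely divisible matrices is closed, and that every embeddable matrix is infinitely divisible, so the infinitely divisible matrices in $\cM^{}_2$ form a closed set containing $\cM^{\mathrm{E}}_2$ hence containing its closure $\cM^{}_{2,\preccurlyeq}$; conversely an infinitely divisible $M$ has $\det(M)\geqslant 0$ by the square-root argument, placing it in $\cM^{}_{2,\preccurlyeq}$. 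Either phrasing closes the proof.
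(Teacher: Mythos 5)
Your proof is correct, and its skeleton matches the paper's: a case split into embeddable matrices versus (non-trivial) idempotents for the root construction, and a reduction to Proposition~\ref{prop:mon-2} for the converse. Two details differ, both in your favour as far as brevity goes. For an embeddable $M=\ee^Q$, the paper writes out $\exp\bigl(\tfrac{1}{n}Q\bigr)$ explicitly and verifies the monotonicity inequality $1\geqslant\epsilon\ts(a+b)$ by hand, whereas you simply observe that $\ee^{Q/n}$ is itself embeddable and therefore monotone by the equivalence in Proposition~\ref{prop:mon-2}; that shortcut is legitimate and removes a computation (at the cost of not exhibiting the root). For the converse, the paper argues on the spectrum ($\sigma(M)=\{1,\lambda\}$ with $\lambda=\mu^2\geqslant 0$ for a Markov square root, then splits into $\lambda=0$ and $\lambda>0$), while you go directly via $\det(M)=\det(N)^2\geqslant 0$, hence $\tr(M)\geqslant 1$, hence monotone --- the same fact in disguise, since $\det(M)$ is the non-Perron eigenvalue for $d=2$, but packaged more economically. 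Your alternative closure argument is also sound for the equivalence, though, as you should note, it only yields infinite divisibility within $\cM^{}_{2}$ and not the sharper ``within $\cM^{}_{2,\preccurlyeq}$'' statement, so the explicit root construction cannot be dispensed with entirely.
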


\begin{proof}
  Let $M \in \cM^{}_2$ be monotone.  By Proposition~\ref{prop:mon-2},
  the case $\det (M) = 0$ means $M^2=M$, so also $M^n=M$ for all
  $n\in\NN$ by induction, and $M$ is a monotone $n$-th root of itself.
  Clearly, the latter statement also applies to $M=\one$.
   
When $M=\left(\begin{smallmatrix} 1-a & a \\ b & 1-b
\end{smallmatrix}\right) \ne \one$ is embeddable, 
we have $a+b>0$ and $M=\ee^Q$ with the unique generator $Q$ from
Lemma~\ref{lem:Kendall}. Then, for any $n\in\NN$, a Markov $n$-th
root of $M$ is given by
\[
   \exp \bigl(\tfrac{1}{n}\ts Q \bigr) \, = \, 
   \begin{pmatrix} 1\nts -\epsilon \ts a & \epsilon \ts a \\
   \epsilon \ts b & 1\nts - \epsilon \ts b \end{pmatrix}
   \quad\text{with } \, \epsilon \, = \, 
   \frac{1 - \! \sqrt[n\,]{1-a-b\ts\ts}}{a+b}\ts ,
\]   
as follows from the same standard calculation with the matrix
exponential that was used to derive \eqref{eq:ei-gen}.  Now,
$\exp \bigl(\frac{1}{n}\ts Q \bigr)$ is monotone if and only if
$1 \geqslant \epsilon (a+b)$, by an application of the criterion from
Example~\ref{ex:mon-con}. But this estimate follows from $0<a+b < 1$
because $\epsilon \in [ \ts 0, 1]$.

It remains to show that infinite divisibility of $M\in\cM^{}_{2}$
implies its monotonicity, which can be derived from the spectrum as
follows. If $1$ is the only eigenvalue of $M$, we have $M=\one$ by
Lemma~\ref{lem:idem}, which is embeddable.  Otherwise, one has
$\sigma (M) = \{1,\lambda\}$ where $\lambda \ne 1$ must be real, with
$\lvert \lambda \rvert \leqslant 1$. Since $M$ has a Markov square
root by assumption, we get $\lambda \in [ \ts 0,1)$. Now,
$\lambda = 0$ means that $M$ is an idempotent, while $\lambda>0$
implies $\det (M) > 0$, so $M$ is embeddable by
Lemma~\ref{lem:Kendall}.  Monotonicity of $M$ now follows from
Proposition~\ref{prop:mon-2}.
\end{proof}

Our next goal is a better understanding of the connection between
$\mon$ and $\cC_d$, aiming at generalisations of
Corollary~\ref{coro:div-2} to general $d$. To this end, we once more
consider a non-negative matrix $C$ with equal rows and parameter sum
$c$, with $C=\nix$ only when $c=0$. For $c>0$, the matrix
$\frac{1}{c} \ts C$ is both Markov and monotone. Consequently, the
Markov matrix $M^{}_{C}$ from Eq.~\eqref{eq:equal-input}, for any
$c\in [ \ts 0,1]$, is a convex combination of $\one$ and
$\frac{1}{c} \ts C$, hence monotone as well.

When $c=0$, which means $M^{}_{C}=\one$, or when $c=1$, where
$M^{}_{C} = C$, the matrix $M^{}_{C}$ is a monotone idempotent. When
$c\in (0,1)$, Eq.~\eqref{eq:ei-gen} implies
\begin{equation}\label{eq:gen-gen}
    M^{}_{C} \, = \, \ee^Q  \quad\text{with }\,
    Q \, = \, - \frac{\log (1-c)}{c} \ts Q^{}_{C} \ts ,
\end{equation}
where $Q^{}_{C} = M^{}_{C} - \one$ as before, which is an equal-input
generator. Now, for arbitrary $n\in\NN$, a standard calculation with
the exponential series gives the formula
\begin{equation}\label{eq:n-roots}
  \exp \bigl( \tfrac{1}{n} \ts Q \bigr) \, = \,
  \one + \frac{1 - \! \sqrt[n\,]{1-c \ts}}{c} \ts Q^{}_{C}
  \, = \, \sqrt[n\,]{1-c \ts} \,\ts \one +
  \bigl( 1 - \! \sqrt[n\,]{1-c \ts } \, \bigr)  \tfrac{1}{c} \ts C \ts .
\end{equation}
Since $\sqrt[n\,]{1-c \ts } \in (0,1)$ under our assumptions, this is
a convex combination of two monotone Markov matrices, hence monotone
and Markov itself. We have thus proved the following generalisation of
our previous statements, assuming $d\geqslant 2$ as usual. In
particular, the idempotent elements play a similar role as in the
two-dimensional case.

\begin{theorem}\label{thm:monotone-2}
  Let\/ $C$, $Q^{}_{C}$ and\/ $M^{}_{C}$ be as above, and let\/
  $c=c^{}_{1} \nts + \ldots + c^{}_{\nts d}$ be the corresponding parameter
  sum. If\/ $c\in [\ts 0,1]$, $M^{}_{C}$ is Markov and monotone, with
  the following properties.
\begin{enumerate}\itemsep=2pt
\item $M^{}_{C}$ is an idempotent if and only if\/ $c \in \{ 0,1 \}$,
  where\/ $c=0$ means\/ $M^{}_{C} = \one$.
\item $M^{}_{C}$ is embeddable if and only if\/ $c\in [ \ts 0,1)$,
   then with\/ $Q=\nix$ for\/ $c=0$ or otherwise with the 
   generator\/ $Q$ from\/ \eqref{eq:gen-gen}.
\end{enumerate}
In particular, $M^{}_{C}=\one$ is the only embeddable idempotent.  
{Further, for all\/ $c\in [0,1]$ and} for
every\/ $n\in\NN$, $M^{}_{C}$ has a Markov\/ $n$-th
root that is both equal-input and monotone, which is to say that\/
$M^{}_{C}$ is infinitely divisible within\/ $\mon \nts \cap \cC_d$.
\qed
\end{theorem}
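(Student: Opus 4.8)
The plan is to assemble the assertions from results already established for equal-input matrices, using the convexity of $\mon$ to pass between the idempotent and the embeddable regimes. First I would record that $M^{}_{C}$ is Markov and monotone whenever $c\in[\ts 0,1]$. The Markov property is immediate from the criterion following \eqref{eq:equal-input}, since $c^{}_i\geqslant 0$ and $c\leqslant 1\leqslant 1+\min^{}_i c^{}_i$. For monotonicity, I note that when $c>0$ the matrix $\frac{1}{c}\ts C$ has all rows equal, hence is trivially monotone and Markov, and that $M^{}_{C}=(1-c)\ts\one + c\bigl(\frac{1}{c}\ts C\bigr)$ exhibits $M^{}_{C}$ as a convex combination of $\one=E^{}_{1,\ldots,d}$ and $\frac{1}{c}\ts C$; monotonicity then follows from the convexity of $\mon$ (Lemma~\ref{lem:convex}). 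The case $c=0$ gives $M^{}_{C}=\one$, which is monotone.

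For part (1), since $M^{}_{C}\in\cC_d$, Fact~\ref{fact:idem-1} says $M^{}_{C}$ is an idempotent exactly when $c\in\{0,1\}$, with $c=0$ forcing $M^{}_{C}=\one$. For part (2), the direction $c\in[\ts 0,1)\Rightarrow M^{}_{C}$ embeddable follows from Proposition~\ref{prop:ei-embed} (or directly from \eqref{eq:ei-gen}), the generator being $Q$ from \eqref{eq:gen-gen}, with $Q=\nix$ when $c=0$. For the converse, within the hypothesis $c\in[\ts 0,1]$ only $c=1$ needs to be excluded: there $\det(M^{}_{C})=(1-c)^{d-1}=0$ by \eqref{eq:det-MC}, and a singular matrix is never of the form $\ee^Q$, since $\det(\ee^Q)=\ee^{\tr Q}>0$. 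Combining (1) and (2) then yields that $M^{}_{C}=\one$ is the only embeddable idempotent.

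Finally, for infinite divisibility within $\mon\cap\cC_d$, I would treat the three ranges of $c$. For $c=0$, $\one$ is its own $n$-th root and lies in $\mon\cap\cC_d$. For $c=1$, $C^2=C$ gives $M^{\ts n}_{C}=C^n=M^{}_{C}$, so $M^{}_{C}=C$ is its own $n$-th root and lies in $\mon\cap\cC_d$ as just shown. For $c\in(0,1)$, formula \eqref{eq:n-roots} writes $\exp(\tfrac{1}{n}Q)$ as a convex combination of $\one$ and $\frac{1}{c}\ts C$ with weights $\sqrt[n\,]{1-c}$ and $1-\sqrt[n\,]{1-c}$, both in $(0,1)$; it is therefore monotone and equal-input, and raising it to the $n$-th power returns $\ee^Q=M^{}_{C}$ by \eqref{eq:gen-gen}.

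The step needing the most care is the converse in part (2): the general equivalence ``$M\in\cC_d$ embeddable $\Longleftrightarrow$ $c<1$'' fails for odd $d\geqslant 3$, so one genuinely uses the standing hypothesis $c\leqslant 1$ — reducing the matter to excluding the singular idempotent at $c=1$ — rather than invoking Proposition~\ref{prop:ei-embed} wholesale. Beyond that I anticipate no real obstacle; everything rests on previously proved facts together with the convexity of $\mon$.
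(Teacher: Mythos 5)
Your proof is correct and follows essentially the same route as the paper: monotonicity via the convex combination of $\one$ and $\frac{1}{c}\ts C$, part (1) from Fact~\ref{fact:idem-1}, part (2) from the equal-input generator in \eqref{eq:gen-gen} together with the singularity of the $c=1$ case, and infinite divisibility from the root formula \eqref{eq:n-roots}. Your explicit handling of the converse in part (2) (excluding only $c=1$ via $\det(M^{}_{C})=0$, rather than citing Proposition~\ref{prop:ei-embed} wholesale) is a welcome clarification of a point the paper leaves implicit.
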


Note that, also in generalisation of the case $d=2$, the set of
monotone Markov matrices of type $M^{}_{C}$ with summatory parameter
$c\in [ \ts 0,1]$ is the closure of the equal-input Markov matrices
that are embeddable with an equal-input generator, with all
non-embeddable boundary cases being non-trivial idempotents. In fact,
one has more as follows.

\begin{coro}\label{coro:convex-2}
  A Markov matrix\/ $M\in \cC_d$ is monotone if and only if its
  summatory parameter satisfies\/ $c \in [ \ts 0,1]$. So, one obtains
  the convex set
\[
    \cmon \, \defeq \, \cC_d \cap \mon \, = \, 
    \big\{ M\in \cC_d : c \in [ \ts 0,1] \big\} ,
\]   
  with the\/ $d+\nts 1$ extremal elements\/ 
  $E^{}_{1}, \ldots , E^{}_{d}$ and\/ $\one$.
  
  Further, $\cmon$ is the disjoint union of the set of equal-input
  embeddable elements from\/ $\cC_d$ with the set\/ $\cC_{d,1}$ of
  non-trivial idempotents in\/ $\cC_d$. The eigenvalues of any\/
  $M\in\cmon$ are real and non-negative, and they are positive
  precisely for the embeddable cases.
\end{coro}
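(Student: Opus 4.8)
The plan is to prove the four assertions in sequence, with the opening biconditional as the key step. For the implication that $c\in[\ts 0,1]$ makes $M^{}_{C}$ monotone, I would invoke the discussion just before Theorem~\ref{thm:monotone-2}: if $c=0$ then $M^{}_{C}=\one$, and if $c\in(0,1]$ then $\frac{1}{c}\ts C$ is a Markov matrix all of whose rows coincide, hence monotone (it sends every vector in a level set to the same image, so it trivially preserves $\preccurlyeq$), and $M^{}_{C}=(1-c)\ts\one+c\bigl(\frac{1}{c}\ts C\bigr)$ is then a convex combination of two monotone Markov matrices, hence monotone since $\mon$ is convex (Lemma~\ref{lem:convex}). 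For the converse, I would use Corollary~\ref{coro:trace}: a monotone non-negative matrix with equal row sums $b$ has trace $\geqslant b$, so a monotone $M^{}_{C}\in\cC_d$ satisfies $\tr(M^{}_{C})\geqslant 1$; but $\tr(M^{}_{C})=(1-c)\ts d+c=d-(d-1)\ts c$, and $d-(d-1)\ts c\geqslant 1$ forces $c\leqslant 1$ because $d\geqslant 2$. Together with $c\in[\ts 0,\frac{d}{d-1}]$ from Fact~\ref{fact:idem-1}, this yields $\cmon=\{M\in\cC_d:c\in[\ts 0,1]\}$.

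Convexity of $\cmon$ then follows at once, either as the intersection of the convex sets $\cC_d$ and $\mon$ or from the linearity of the summatory parameter. For the extremal points, I would first observe that $\one$ (with $c=0$) and $E^{}_{1},\ldots,E^{}_{d}$ (with $c=1$) all lie in $\cmon$ and are already extremal in the larger set $\cC_d$ by Lemma~\ref{lem:convex-1}, hence remain extremal in $\cmon$. To rule out further extremal points, I would show $\cmon=\mathrm{conv}\{\one,E^{}_{1},\ldots,E^{}_{d}\}$ by reusing the explicit decomposition from the proof of Lemma~\ref{lem:convex-1} in the case $c\leqslant 1$: taking $r=1-c$ and $t^{}_{i}=c^{}_{i}$ gives $M^{}_{C}=r\ts\one+\sum_i t^{}_i\ts E^{}_i$, a convex combination, where one uses $\sum_i c^{}_i E^{}_i=C$. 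Since any extreme point of the convex hull of finitely many points must belong to that finite set, the extremal points of $\cmon$ are exactly $\one,E^{}_{1},\ldots,E^{}_{d}$.

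For the disjoint-union description I would split $c\in[\ts 0,1]$ into $c\in[\ts 0,1)$ and $c=1$: by Proposition~\ref{prop:ei-embed} the former is precisely the set of elements of $\cC_d$ that are embeddable via an equal-input generator, and by definition the latter is $\cC_{d,1}$; these two pieces are disjoint and their union is $\cmon$. Finally, for the spectral claim I would recall from Section~\ref{sec:ei} that for $c>0$ the matrix $C$ has eigenvalues $c$ and $0$, the latter with multiplicity $d-1$, so $M^{}_{C}=(1-c)\ts\one+C$ has eigenvalues $1$ and $1-c$ (and $M^{}_{C}=\one$ has eigenvalue $1$ when $c=0$); since $c\in[\ts 0,1]$ on $\cmon$, these are real and lie in $[\ts 0,1]$, and the smaller one, $1-c$, is strictly positive exactly when $c<1$, that is, exactly in the embeddable case, matching the equivalence of positive spectrum with equal-input embeddability in Proposition~\ref{prop:ei-embed}.

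The one point that needs care is the extremality statement: because $\cmon$ is \emph{not} a face of $\cC_d$ (a matrix with $c=1$ can be written as a proper convex combination of matrices in $\cC_d$ whose summatory parameters lie on both sides of $1$), one cannot simply pass extremality down by restriction --- the explicit convex decomposition of an arbitrary $M^{}_{C}\in\cmon$ into $\one$ and the $E^{}_i$ is what does the work. All remaining steps are routine bookkeeping built on the results already established.
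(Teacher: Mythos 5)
Your proof is correct, and it follows the paper's route for most of the argument: the forward direction (convex combination of $\one$ and $\frac{1}{c}\ts C$, as packaged in Theorem~\ref{thm:monotone-2}), the explicit decomposition $M^{}_{C}=(1-c)\ts\one+\sum_i c^{}_i\ts E^{}_i$ for the extremal points, the split $c\in[\ts 0,1)$ versus $c=1$ for the disjoint union, and the eigenvalues $1$ and $1-c$ for the spectral claim all coincide with what the paper does. The one genuine divergence is the converse direction. The paper argues that for $c>1$ (which forces all $c^{}_i>0$) the matrix $T^{-1}M\ts T$ fails to be non-negative, invoking Fact~\ref{fact:monotone}{\ts}(3) and leaving the verification as ``easy to check''. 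You instead apply Corollary~\ref{coro:trace}: monotonicity forces $\tr(M^{}_{C})=d-(d-1)\ts c\geqslant 1$, hence $c\leqslant 1$ for $d\geqslant 2$. This is a clean, fully explicit one-line computation that replaces an unverified claim about the entries of $T^{-1}M\ts T$, at the cost of relying on the trace bound (itself a consequence of the extremal decomposition in Lemma~\ref{lem:convex}). Your closing caveat is also well taken, with one small clarification: extremality \emph{does} pass down from $\cC_d$ to the convex subset $\cmon$ (any proper convex combination within $\cmon$ would be one within $\cC_d$), so $\one$ and the $E^{}_i$ are automatically extremal in $\cmon$ by Lemma~\ref{lem:convex-1}; what genuinely requires the explicit decomposition is ruling out \emph{additional} extremal points of $\cmon$ that are not extremal in $\cC_d$, which your convex-hull argument handles correctly.
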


\begin{proof}
  It is clear from Theorem~\ref{thm:monotone-2} that all $M\in \cC_d$
  with $c\in [ \ts 0,1]$ are monotone, so we need to show that no
  further element of $\cC_d$ is. To this end, consider a Markov matrix
  of the form $M = (1-c) \one + C$ with $c>1$, which implies that all
  $c^{}_{i} > 0$. Then, it is easy to check that $T^{-1} M \ts T$
  fails to be a non-negative matrix, where $T$ is the matrix from
  \eqref{eq:T-def}, and $M$ fails to be monotone by
  Fact~\ref{fact:monotone}{\ts}(3).
  
  When $c\in [\ts 0,1]$, we have
  $0 \leqslant c^{}_{i} \leqslant c^{}_{1} \nts + \ldots + c^{}_{d} = c \ts
  \leqslant 1$ for all $1 \leqslant i \leqslant d$, and
\[
    M^{}_{C} \, = \, (1-c)\ts \one + C \, = \, (1-c)\ts \one +
    \sum_{i=1}^{d} c^{}_{i} \, E^{}_{i}
\]  
  is a convex combination, where the extremality of
  $E^{}_{1}, \ldots , E^{}_{d}$ and\/ $\one$ is clear.
  
  Another application of Theorem~\ref{thm:monotone-2} gives the
  decomposition claimed, while the statement on the spectrum is clear
  because the eigenvalues of $M^{}_{C}$ are $1$ and $1-c$.
\end{proof}
 
Geometrically, the situation is that the simplex $\cmon$ separates the
compact set $\cC_d$ into the subset with $c\in [ \ts 0,1)$, which are
the `good' cases for embeddability, and the subset with
$c\in \bigl( 1, \frac{d}{d-1}\bigr]$, where embeddability requires $d$
even and further conditions, but is never possible with an equal-input
generator. For $d=2$, we refer to \cite[Fig.~1]{BS} for an
illustration. \smallskip

One can view $\cmon$ differently when starting in $\mon$. Let $S_d$ be
the symmetric (or permutation) group of $d$ elements, and $P_{\pi}$
for $\pi\in S_d$ the standard permutation matrix that represents the
mapping $e^{}_{i} \mapsto e^{}_{\pi (i)}$ under multiplication to the
right. $P_{\pi}$ has elements $\delta^{}_{i, \pi (j)}$ and satisfies
$P^{-1}_{\pi} = P^{}_{\pi^{-1}}$.  There are $d\ts !$ such matrices,
the extremal elements among the doubly stochastic matrices mentioned
earlier. The conjugation action by such a matrix gives
\[
    P^{}_{\pi} \ts E^{}_{( k,\ell )} \ts  P^{-1}_{\pi} \, = \,
    E^{}_{\left( \pi (k), \pi (\ell) \right)}
    \quad \text{and} \quad
    P^{}_{\pi} \ts E^{}_{\ell_1 \nts , \ldots , \ts \ell_d}
    \ts  P^{-1}_{\pi} \, = \,
    E^{}_{\pi (\ell_{\pi^{-1}(1)}), \ldots , \pi (\ell_{\pi^{-1} (d)})} \ts ,
\]
as follows from a simple calculation with
$E^{}_{\ell_1 \nts , \ldots , \ts \ell_d} = E^{}_{(1,\ts \ell_1)} +
\ldots + E^{}_{(d, \ts \ell_d)}$, or, alternatively, from tracing the
images of the basis vectors $e_i$ for $1 \leqslant i \leqslant d$.

Now, a set $\cF\subseteq\cM_d$ of Markov matrices is called
\emph{permutation invariant} if
$P^{}_{\pi} \ts\ts \cF P^{-1}_{\pi} = \cF$ holds for all
$\pi \in S_d$. Clearly, $\cM_d$ itself is such a set, as is $\cC_d$ or
its subset of constant-input matrices.  The latter are also
\emph{individually} permutation invariant (which is also called
\emph{exchangeable} in probability theory \cite{Feller}), that is,
$P^{}_{\pi} \ts M P^{-1}_{\pi} = M$ holds for every constant-input
matrix $M$ and all $\pi \in S_d$. In fact, the Markov matrices that
are individually permutation invariant are precisely the
constant-input ones, without restriction on the summatory parameter
$c$.

The set of all $\{ 0, 1 \}$ Markov matrices is permutation invariant
as well, and partitions into $S^{}_d\ts $-orbits of the form
$\cO^{}_{\nts S_d} (M) = \{ P^{}_{\pi} \ts M P^{-1}_{\pi} : \pi \in
S^{}_d \}$. Two such orbits are
\[
    \cO^{}_{\nts S_d} (\one) \, = \, \{ \one \}
    \quad \text{and} \quad
    \cO^{}_{\nts S_d} (E^{}_{1}) \, = \,
    \{ E^{}_{1}, \ldots , E^{}_{d} \} \ts ,
\]
which both consist of monotone matrices only. One can check that no
other orbit of the decomposition has this property, which implies the
following characterisation.

\begin{fact}
  The convex set\/ $\cmon$ is the maximal subset of\/ $\mon$ that is
  permutation invariant. The elements of\/ $\cmon$ that are\/
  \emph{individually} permutation invariant are the constant-input
  matrices with\/ $c\in [\ts 0,1]$.  \qed
\end{fact}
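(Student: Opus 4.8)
The plan is to prove both statements by combining the $S_d$-orbit decomposition of the $\{0,1\}$ Markov matrices, recalled just before the statement, with the convexity and extremal-point descriptions from Lemma~\ref{lem:convex} and Corollary~\ref{coro:convex-2}. The key observation is that permutation invariance of a \emph{closed convex} subset $\cF\subseteq\cM_d$ is equivalent to permutation invariance of its set of extremal points, because conjugation by a fixed $P_\pi$ is a linear bijection of $\cM_d$ that maps extremal points of $\cF$ to extremal points of $P_\pi\ts\cF\ts P_\pi^{-1}$. Since the extremal points of $\mon$ are exactly the monotone $\{0,1\}$ Markov matrices $E^{}_{\ell_1,\ldots,\ell_d}$ with $1\leqslant\ell_1\leqslant\cdots\leqslant\ell_d\leqslant d$, a permutation-invariant closed convex $\cF\subseteq\mon$ must be the convex hull of some union of $S_d$-orbits, each of which is contained entirely among those monotone $\{0,1\}$ matrices.

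First I would record that $\cmon$ \emph{is} permutation invariant and convex: this is immediate from Corollary~\ref{coro:convex-2}, which identifies $\cmon$ as the convex hull of $\{\one, E^{}_1,\ldots,E^{}_d\}$, i.e. of the orbit union $\cO^{}_{\nts S_d}(\one)\cup\cO^{}_{\nts S_d}(E^{}_1)$, and permutation invariance of a convex hull follows from permutation invariance of the spanning set. Next, for maximality, I would take any permutation-invariant $\cF$ with $\cmon\subseteq\cF\subseteq\mon$; without loss of generality $\cF$ may be assumed closed and convex (one can pass to the closed convex hull, which stays inside $\mon$ since $\mon$ is closed and convex, and stays permutation invariant). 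Its extremal points form a permutation-invariant subset of the $E^{}_{\ell_1,\ldots,\ell_d}$, hence a union of $S_d$-orbits. The claim, already asserted in the text immediately above the statement, is that among the monotone $\{0,1\}$ matrices the only orbits consisting \emph{entirely} of monotone matrices are $\{\one\}$ and $\{E^{}_1,\ldots,E^{}_d\}$; any other monotone $E^{}_{\ell_1,\ldots,\ell_d}$ lies in an orbit that also contains non-monotone matrices. Since $\cF$ is permutation invariant and sits inside $\mon$, it therefore cannot contain any extremal point outside those two orbits, so its extremal points are contained in $\{\one,E^{}_1,\ldots,E^{}_d\}$, giving $\cF\subseteq\cmon$ by Krein--Milman and hence $\cF=\cmon$.

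For the orbit claim I would argue directly. An extremal monotone matrix $E^{}_{\ell_1,\ldots,\ell_d}$ with $\ell_1\leqslant\cdots\leqslant\ell_d$ is determined by the multiset $\{\ell_1,\ldots,\ell_d\}$; it equals $\one$ exactly when this multiset is $\{1,2,\ldots,d\}$, and equals some $E^{}_i$ exactly when the multiset is the constant $\{i,i,\ldots,i\}$. In any other case the multiset has some value repeated but is not constant, say $\ell_p=\ell_{p+1}$ for some $p$ while $\ell_a\neq\ell_b$ for some $a<b$; applying a transposition $\pi$ that swaps the output labels $\ell_p$ and $\ell_{p+1}$ appropriately (more precisely, a permutation of $\{1,\ldots,d\}$ acting on the index set $\{1,\ldots,d\}$ that destroys the monotone ordering of the rows) produces a $\{0,1\}$ Markov matrix in the same orbit whose row labels are no longer non-decreasing, hence is non-monotone by the characterisation $E^{}_{\ell_1,\ldots,\ell_d}$ monotone $\iff\ell_1\leqslant\cdots\leqslant\ell_d$. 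This is the step I expect to be the main (though still elementary) obstacle: one must be slightly careful about which symmetric group acts where, since conjugation by $P_\pi$ simultaneously permutes rows and output labels, and verify that the two ``degenerate'' multiset types $\{1,\ldots,d\}$ and $\{i,\ldots,i\}$ really are the only ones stable under this combined action inside the monotone cone.

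Finally, for the second sentence of the statement, the individually permutation-invariant Markov matrices are, as recalled in the text, precisely the constant-input matrices $M_c=\one+c\ts J_d$ with $0\leqslant c\leqslant\frac{d}{d-1}$; intersecting with $\cmon=\{M\in\cC_d: c\in[\ts0,1]\}$ from Corollary~\ref{coro:convex-2} immediately restricts the summatory parameter to $c\in[\ts0,1]$, which is exactly the asserted description. This last part is essentially a one-line intersection and needs no separate argument beyond invoking Corollary~\ref{coro:convex-2} and the characterisation of individually permutation-invariant Markov matrices as the constant-input ones.
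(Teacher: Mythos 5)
Your overall strategy (the $S_d$-orbit decomposition of the $\{0,1\}$ matrices combined with convexity) follows the hint given in the paper just before the statement, and the easy parts --- that $\cmon$ is permutation invariant, and the identification of the individually invariant elements via the constant-input characterisation --- are fine. However, the maximality argument has a genuine gap at the step ``Its extremal points form a permutation-invariant subset of the $E^{}_{\ell_1,\ldots,\ell_d}$''. What is true is that conjugation by $P^{}_{\pi}$ is an affine automorphism, so it permutes the extremal points of an invariant closed convex set $\cF$ among themselves; what is \emph{not} true is that an extremal point of a closed convex subset $\cF\subseteq\mon$ must be an extremal point of $\mon$. Extremality passes from a convex superset down to a subset, not upward: a proper convex subset acquires new faces. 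For instance, the segment of constant-input matrices $\{\ts\one + c\ts J_d : c\in[\ts 0,1]\ts\}$ is closed, convex, permutation invariant and contained in $\mon$, yet its extremal point $\frac{1}{d}(E^{}_1+\cdots+E^{}_d)$ is not a $\{0,1\}$ matrix. Consequently, for a hypothetical permutation-invariant $\cF$ with $\cmon\subsetneq\cF\subseteq\mon$ you cannot conclude that $\mathrm{ext}(\cF)$ lies among the monotone $\{0,1\}$ matrices, the orbit count is never brought to bear on such an $\cF$, and no contradiction is reached. The combinatorial orbit claim that you flag as the main obstacle is in fact correct and routine; the real obstacle is this unjustified extremal-point inclusion.

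The first sentence of the Fact is better proved directly, bypassing extremal points. If every conjugate $P^{}_{\pi} M P^{-1}_{\pi}$, with entries $m^{}_{\sigma(i)\sigma(j)}$, is monotone, then applying the last of the partial-order inequalities (the case $m=d$ of $\sum_{i=m}^{d}$) to a conjugate with $\sigma(d)=j$ gives $m^{}_{\sigma(1)j}\leqslant m^{}_{\sigma(2)j}\leqslant\cdots\leqslant m^{}_{\sigma(d-1)j}\leqslant m^{}_{jj}$. Letting $\sigma$ run over all permutations with $\sigma(d)=j$ puts any two off-diagonal entries of column $j$ in either order, so they all coincide, say $m^{}_{ij}=c^{}_j$ for $i\neq j$, with $m^{}_{jj}\geqslant c^{}_j$. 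The row-sum condition then forces $m^{}_{jj}=1-c+c^{}_j$ with $c=c^{}_1+\cdots+c^{}_d$, hence $M=(1-c)\ts\one+C$, and $m^{}_{jj}\geqslant c^{}_j$ is exactly $c\leqslant 1$, so $M\in\cmon$ by Corollary~\ref{coro:convex-2}. Combined with the invariance of $\cmon$ itself (and the observation that permutation-invariant subsets of $\mon$ are closed under unions, so the maximal one is unique), this settles the first claim; your one-line treatment of the second sentence is correct as it stands.
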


Let us turn to Markov semigroups.  Recall from \cite{KK} that a
(homogeneous) Markov semigroup $\{ \ee^{t Q} : t \geqslant 0\}$, with
generator $Q$, is called \emph{monotone} when $\ee^{t Q}$ is monotone
for every $t\geqslant 0$.  Moreover, a generator $Q$ is called
\emph{monotone} if all off-diagonal elements of $T^{-1}\nts Q \ts T$
are non-negative, where $T$ is the matrix from Eq.~\eqref{eq:T-def}.
This concept is motivated by the following connection, which is
{a minor variant of} \cite[Thm.~2.1]{KK}. Due to its 
importance, we include a short proof {that is tailored to
our later needs in this context.}

\begin{prop}\label{prop:mon-gen}
  If\/ $Q$ is a Markov generator, the following properties are
  equivalent.
\begin{enumerate}\itemsep=2pt
\item The semigroup\/ $\{ \ee^{t Q} : t \geqslant 0\}$ is monotone.
\item The generator\/ $Q$ is monotone.
\end{enumerate}  
\end{prop}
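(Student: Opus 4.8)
The plan is to establish both implications through the conjugated matrix $\widetilde{Q} \defeq T^{-1} Q \ts T$, exploiting the fact that conjugation by $T$ intertwines the matrix exponential, so $T^{-1} \ee^{tQ} T = \ee^{t \widetilde{Q}}$. By Fact~\ref{fact:monotone}{\ts}(3), the semigroup is monotone precisely when $\ee^{t\widetilde{Q}} \geqslant \nix$ (element-wise) for all $t\geqslant 0$, and $Q$ is monotone precisely when $\widetilde{Q}$ has non-negative off-diagonal entries. Thus the statement reduces to a purely real-matrix assertion: for a matrix $\widetilde{Q}$ (not necessarily a generator, but the conjugate of one), $\ee^{t\widetilde{Q}}$ has non-negative entries for all $t\geqslant 0$ if and only if $\widetilde{Q}$ has non-negative off-diagonal entries. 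This is the classical characterisation of \emph{essentially non-negative} (or Metzler) matrices, and both directions are short.

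First I would prove (2) $\Rightarrow$ (1). If $\widetilde{Q}$ has non-negative off-diagonal entries, pick $\mu \geqslant 0$ large enough that $\widetilde{Q} + \mu \one$ has all entries non-negative. Then
\[
   \ee^{t\widetilde{Q}} \, = \, \ee^{-\mu t} \ts \ee^{t(\widetilde{Q} + \mu\one)}
   \, = \, \ee^{-\mu t} \sum_{k=0}^{\infty} \frac{t^k}{k!}\, (\widetilde{Q}+\mu\one)^k ,
\]
and every term on the right is a non-negative matrix for $t\geqslant 0$, so $\ee^{t\widetilde{Q}} \geqslant \nix$. Transporting back, $T^{-1}\ee^{tQ}T = \ee^{t\widetilde{Q}} \geqslant \nix$ for all $t\geqslant 0$, which by Fact~\ref{fact:monotone}{\ts}(3) (applied to the non-negative matrix $\ee^{tQ}$, which has equal row sums since $Q$ is a generator) says exactly that every $\ee^{tQ}$ is monotone.

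For (1) $\Rightarrow$ (2), suppose $\ee^{t\widetilde{Q}} \geqslant \nix$ for all $t \geqslant 0$. Fix an off-diagonal pair $i\ne j$ and look at the $(i,j)$ entry: since $(\ee^{t\widetilde{Q}})^{}_{ij} = \delta^{}_{ij} + t\,\widetilde{Q}^{}_{ij} + O(t^2) = t\,\widetilde{Q}^{}_{ij} + O(t^2)$ as $t\downarrow 0$, dividing by $t>0$ and letting $t\to 0^{+}$ forces $\widetilde{Q}^{}_{ij} = \lim_{t\to 0^{+}} \tfrac{1}{t}(\ee^{t\widetilde{Q}})^{}_{ij} \geqslant 0$. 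Hence all off-diagonal entries of $\widetilde{Q} = T^{-1}QT$ are non-negative, i.e.\ $Q$ is monotone. The main point to be careful about — though it is not really an obstacle — is the bookkeeping that lets one move freely between statements about $\ee^{tQ}$ and statements about its $T$-conjugate; once the intertwining relation $T^{-1}\ee^{tQ}T = \ee^{t\widetilde{Q}}$ and Fact~\ref{fact:monotone}{\ts}(3) are in hand, both directions are immediate from the Metzler-matrix argument above.
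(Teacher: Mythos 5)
Your proposal is correct and follows essentially the same route as the paper: the forward direction is the same differentiate-at-$t=0$ argument, and the reverse direction is the same uniformisation trick, since your $\widetilde{Q}+\mu\one$ is just $\alpha\ts T^{-1} M_{\alpha} T$ for the paper's matrix $M_{\alpha}=\one+\alpha^{-1}Q$, and your series is the same Poisson mixture written on the conjugated side. The only cosmetic difference is that the paper phrases the conclusion as a convex combination of the monotone Markov matrices $M_{\alpha}^{m}$ rather than as entrywise non-negativity of $\ee^{t\widetilde{Q}}$.
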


\begin{proof}
  For $(1) \Rightarrow (2)$, observe that
  $T^{-1} \ee^{t Q} \ts\ts T \geqslant \nix$ implies
  $\bigl( T^{-1} \frac{1}{t} (\ee^{tQ} - \one) \ts T \bigr)_{ij}
  \geqslant 0$ for $t>0$ and all $i\ne j$. Then, taking
  $t \, \raisebox{1.5pt}{$\scriptstyle{\searrow}$} \, 0$ establishes
  this direction.
  
  For $(2) \Rightarrow (1)$, is it clear that
  $T^{-1} \nts Q \ts\ts T \nts + \alpha \ts \one \geqslant \nix$ holds
  for any sufficiently large $\alpha > 0$. Choose $\alpha$ also large
  enough so that $M^{}_{\alpha} \defeq \one + \alpha^{-1} Q$ is
  Markov, which is clearly possible. Then,
  $T^{-1} \nts M^{}_{\alpha} \ts T \geqslant \nix$, and we get
  $T^{-1} \nts M^{m}_{\alpha} \ts T = (T^{-1} \nts M^{}_{\alpha} \ts T
  )^m \geqslant \nix$ for all integers $m\geqslant 0$. Now, observe
\[
    \ee^{t Q} \ts = \, \ee^{-\alpha t} \ee^{\alpha t M_{\alpha}} \ts =
    \sum_{m=0}^{\infty} \ee^{-\alpha t}\ts \frac{(\alpha t)^m}{m !}
    \, M^{m}_{\alpha}  ,
\]  
which, for all $t\geqslant 0$, constitutes a convergent sum that is a
convex combination of monotone Markov matrices. Consequently,
$\ee^{t Q}$ is monotone as well.
\end{proof}

\begin{example}\label{ex:gen-con}
  Let $Q = (q^{}_{ij})^{}_{1\leqslant i,j \leqslant d}$ be a Markov
  generator.  When $d=2$, it is always monotone, that is, no extra
  condition emerges; compare Proposition~\ref{prop:mon-2}.  When
  $d=3$, being monotone is equivalent with the two conditions
\[
     q^{}_{23} \, \geqslant \, q^{}_{13} 
     \quad \text{and} \quad
     q^{}_{21} \, \geqslant \, q^{}_{31} \ts ,
\]   
which provide a surprisingly simple criterion for monotonicity
in this case.
\exend
\end{example}

The above considerations have the following consequence.

\begin{coro}
   For\/ $M \in \mon$, the following properties are equivalent.
\begin{enumerate}\itemsep=2pt
\item $M$ is embeddable via a monotone Markov generator.
\item $M$ is non-singular and infinitely divisible within\/ $\mon$.
\end{enumerate}
\end{coro}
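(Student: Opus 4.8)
The plan is to show the two implications separately, using the monotone analogue of Kingman's characterisation of embeddability (non-singular $+$ infinitely divisible) together with Proposition~\ref{prop:mon-gen}, which ties monotone semigroups to monotone generators. For $(1)\Rightarrow(2)$: if $M=\ee^Q$ with $Q$ a monotone Markov generator, then by Proposition~\ref{prop:mon-gen} the whole semigroup $\{\ee^{tQ}:t\geqslant 0\}$ is monotone, so in particular $\exp(\tfrac{1}{n}Q)$ is a monotone Markov matrix for every $n\in\NN$; these are $n$-th roots of $M$ lying in $\mon$, so $M$ is infinitely divisible within $\mon$. Non-singularity is automatic since $\det(\ee^Q)=\ee^{\tr Q}>0$. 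The only mild point to note is that infinite divisibility \emph{within} $\mon$ is genuinely a statement about the roots staying monotone, which is exactly what the semigroup monotonicity delivers.

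For $(2)\Rightarrow(1)$, the idea is to invoke the classical fact \cite[Prop.~7]{King} that a non-singular, infinitely divisible Markov matrix is embeddable, $M=\ee^Q$ for some Markov generator $Q$, and then to upgrade $Q$ to a \emph{monotone} generator. The natural route is to take a sequence of monotone Markov $n$-th roots $R^{}_n\in\mon$ with $R^{n}_n=M$; since $M$ is non-singular, a standard argument (as used for the closedness of the infinitely divisible matrices, cf.\ Example~\ref{ex:constant}) shows $R^{}_n\to\one$ and that $n(R^{}_n-\one)$ has a convergent subsequence whose limit is a Markov generator $Q$ with $\ee^Q=M$. The crucial extra observation is that monotonicity of $R^{}_n$ is preserved in this limit: by Fact~\ref{fact:monotone}(3), $T^{-1}R^{}_n T\geqslant\nix$, hence $T^{-1}\bigl(n(R^{}_n-\one)\bigr)T$ has non-negative off-diagonal entries (the $-n\one$ contributes only to the diagonal), and this closed condition passes to $Q$. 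Thus $T^{-1}QT$ has non-negative off-diagonal entries, i.e.\ $Q$ is a monotone generator, and then Proposition~\ref{prop:mon-gen} (or just the definition) finishes the proof.

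I expect the main obstacle to be the subsequence/limit argument identifying a \emph{single} generator $Q$ from the roots $R^{}_n$ and ensuring $\ee^Q=M$: one has to argue that $R^{}_n\to\one$ (using $\det(R^{}_n)=\det(M)^{1/n}\to 1$ together with compactness and that $0$ cannot be an accumulating eigenvalue of the roots, since $M$ is non-singular), that the normalised differences $Q^{}_n\defeq n(R^{}_n-\one)$ are bounded, and that along a convergent subsequence $Q^{}_n\to Q$ one gets $R^{}_n=\exp\bigl(\tfrac1n Q^{}_n\bigr)+o(1/n)$ in a way that yields $M=R^{n}_n\to\ee^Q$. This is precisely the content underlying Kingman's result, so I would either quote it directly and only add the monotonicity bookkeeping, or reproduce the compactness argument briefly; in the write-up I would lean on \cite[Prop.~7]{King} for embeddability and spend the words on the observation that the monotone cone $\{N : T^{-1}NT\geqslant\nix\}$ is closed and that the generator inherits monotonicity from the roots. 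Everything else is routine once $Q$ is in hand.
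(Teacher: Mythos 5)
Your proposal is correct and follows essentially the same route as the paper's proof: Proposition~\ref{prop:mon-gen} gives $(1)\Rightarrow(2)$ via the monotone semigroup $\{\ee^{tQ}\}$, and for $(2)\Rightarrow(1)$ both arguments invoke Kingman's criterion and then extract a monotone generator as a subsequential limit of the matrices $n(R_n-\one)$ built from monotone $n$-th roots, using that the monotonicity condition $T^{-1}NT$ having non-negative off-diagonal entries is closed. The extra bookkeeping you supply (boundedness of the normalised differences, $R_n\to\one$) is exactly the content the paper delegates to ``a standard compactness argument'' and Kingman's original proof.
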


\begin{proof}
  (1) $\Rightarrow$ (2): Let $M=\ee^Q$ with $Q$ a monotone generator,
  so $\det (M) = \ee^{\tr (Q)} >0$. Now, $\frac{1}{n}\ts Q$ is still a
  monotone generator, for any $n\in\NN$, and
  $\exp \bigl( \frac{1}{n}\ts Q\bigr)$ is a monotone Markov matrix (by
  Proposition~\ref{prop:mon-gen}) that is also an $n$-th root of
  $M \nts $.

  (2) $\Rightarrow$ (1): By Kingman's characterisation
  \cite[Prop.~7]{King}, $M$ is embeddable, so $M=\ee^Q$ with some
  generator $Q$. We need to show that $Q$ can be chosen to be
  monotone. Let $R_n$ be an $n$-th root of $M$ that is Markov and
  monotone, which exists and implies that
  $A_n \defeq n\ts ( R_n \nts - \one )$ is a monotone generator. By a
  standard compactness argument, there is a subsequence
  $(n^{}_{i})^{}_{i\in\NN}$ of increasing integers such that
  $Q{\ts}' = \lim_{i\to\infty} A_{n_i}$ is a monotone generator as
  well.  From here on, we can employ Kingman's original proof to
  conclude that
\[
  M \, = \, \Bigl( \one + \myfrac{Q{\ts}'}{n^{}_{i}} \Bigr)^{n^{}_{i}}
  + \, o (1) \qquad \text{as } i\to\infty \ts ,
\]
which gives $M = \ee^{Q'}$ as claimed.
\end{proof}

\subsection{Idempotents and infinite divisibility}

At this point, it seems worthwhile to take a closer look at infinite
divisibility in general. In this context, we refer to
\cite[Sec.~X.9]{Feller} for the underlying (pseudo{\ts}-)Poissonian
structures.

\begin{prop}\label{prop:Pois}
  Let\/ $P^{}_{0} , P \in \cM^{}_{d}$ be chosen such that\/
  $P^{2}_{0} = P^{}_{0}$, so\/ $P^{}_{0}$ is an idempotent, and that\/
  $P^{}_{0} P = P P^{}_{0} = P$. Then, the matrix family\/
  $\{ M(t) : t \geqslant 0 \}$ with
\[
     M (t) \, \defeq \, \ee^{-t} \Bigl( P^{}_{0} + \sum_{m=1}^{\infty}
     \myfrac{t^m}{m!} \, P^m \Bigr) \, = \:
     \ee^{-t} \bigl( P^{}_{0} - \one + \ee^{t P} \bigr)
\]    
   satisfies the following properties, where\/ 
   $A \defeq P\nts - \one$ is a {Markov} generator.
\begin{enumerate}\itemsep=2pt
\item The mapping\/ $t\mapsto M(t)$ is continuous, with\/
  $M (0) = P^{}_{0}$.
\item $M(t) \ts M(s) = M(t+s)$ holds for all\/ $t,s \geqslant 0$.
\item $M(t)$ is Markov, for all\/ $t \geqslant 0$. 
\item $M(t) = \ee^{-t} ( P^{}_{0} \nts - \one ) +
   \ee^{t A} = P^{}_{0} \ts \ee^{tA}$ for all\/ $t\geqslant 0$.
\item $P=P^{}_{0}$ if and only if\/ $M(t) = P^{}_{0}$ holds for 
   all\/ $t\geqslant 0$.   
\item For\/ $t\geqslant 0$, $M (t)$ is idempotent if and only if\/
   $M (t) = P^{}_{0}$.
\end{enumerate}   
In particular, $\{ M(t) : t \geqslant 0 \}$ always constitutes a
continuous monoid, with\/ $P^{}_{0}$ as its neutral element, while it
is a homogeneous Markov semigroup if and only if\/ $P^{}_{0} = \one.$
\end{prop}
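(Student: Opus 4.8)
The plan is to make property~(4) the structural backbone: once $M(t)=P^{}_{0}\ts\ee^{tA}$ is established, with $A=P-\one$, every other assertion follows from this single identity together with $P^{2}_{0}=P^{}_{0}$ and the commutation $P^{}_{0}A=AP^{}_{0}$. So I would begin with the routine preliminaries. The matrix $A=P-\one$ has the (non-negative) off-diagonal entries of $P$ and vanishing row sums, hence is a generator; and since $\sum_{m\geqslant 1}\frac{t^{m}}{m!}P^{m}=\ee^{tP}-\one$, the two displayed forms of $M(t)$ agree. The decisive algebraic point is that $P^{}_{0}P=P$ forces $P^{}_{0}P^{m}=P^{m}$ for all $m\geqslant 1$, so that $P^{}_{0}\ee^{tP}=P^{}_{0}-\one+\ee^{tP}$; using $\ee^{tA}=\ee^{-t}\ee^{tP}$ and multiplying by $\ee^{-t}$ gives both $M(t)=\ee^{-t}(P^{}_{0}-\one)+\ee^{tA}$ and $M(t)=P^{}_{0}\ts\ee^{tA}$, which is~(4). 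I would also record $P^{}_{0}A=P^{}_{0}P-P^{}_{0}=P-P^{}_{0}=PP^{}_{0}-P^{}_{0}=AP^{}_{0}$, so that $P^{}_{0}$ commutes with every $\ee^{sA}$.

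The easy items then drop out. For~(1), setting $t=0$ in the second form gives $M(0)=P^{}_{0}-\one+\one=P^{}_{0}$, and continuity is clear. For~(2), $M(t)M(s)=P^{}_{0}\ee^{tA}P^{}_{0}\ee^{sA}=P^{2}_{0}\ee^{(t+s)A}=M(t+s)$ by the commutation. For~(3), the first form exhibits $M(t)$ as the countable convex combination $\ee^{-t}P^{}_{0}+\sum_{m\geqslant 1}\ee^{-t}\frac{t^{m}}{m!}P^{m}$ of the Markov matrices $P^{}_{0},P,P^{2},\dots$, with coefficients summing to $\ee^{-t}\ee^{t}=1$; since $\cM^{}_{d}$ is compact and convex, $M(t)\in\cM^{}_{d}$. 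For~(5), if $P=P^{}_{0}$ then $A^{2}=(P^{}_{0}-\one)^{2}=\one-P^{}_{0}=-A$, hence $\ee^{tA}=\one+(1-\ee^{-t})A$ and $M(t)=P^{}_{0}+(1-\ee^{-t})P^{}_{0}A=P^{}_{0}$ because $P^{}_{0}A=\nix$; conversely, differentiating $M(t)=P^{}_{0}\ee^{tA}$ at $t=0$ yields $M'(0)=P^{}_{0}A=P-P^{}_{0}$, which must vanish if $M(t)\equiv P^{}_{0}$.

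Property~(6) is where the real work lies, and the step I expect to be the main obstacle. One implication is immediate from $P^{2}_{0}=P^{}_{0}$, and the case $t=0$ of the converse is~(1). For $t>0$, property~(2) gives $M(t)^{2}=M(2t)$, so $M(t)$ is idempotent exactly when $P^{}_{0}\bigl(\ee^{2tA}-\ee^{tA}\bigr)=\nix$. Here I would bring in the spectral structure of $A=P-\one$: since $P\in\cM^{}_{d}$, the eigenvalue $1$ of $P$ is semisimple and every other eigenvalue lies strictly inside the unit disc (a nonzero purely imaginary eigenvalue of $A$ would make $P$ have an eigenvalue of modulus $>1$). Hence $0$ is a semisimple eigenvalue of $A$, the space splits $A$-invariantly as the direct sum of $\ker A$ and the image of $A$, the matrix $\ee^{tA}$ restricts to the identity on $\ker A$, and $\ee^{tA}\bigl(\ee^{tA}-\one\bigr)$ restricts to an invertible map on the image of $A$ for every $t>0$. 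Therefore the image of $\ee^{2tA}-\ee^{tA}$ equals the image of $A$, so $P^{}_{0}\bigl(\ee^{2tA}-\ee^{tA}\bigr)=\nix$ forces $P^{}_{0}A=\nix$, i.e.\ $P=P^{}_{0}$, and then~(5) gives $M(t)=P^{}_{0}$. The same bookkeeping shows that when $P\neq P^{}_{0}$ one has $M(t)\neq P^{}_{0}$ for every $t>0$, so for those $t$ both sides of the stated equivalence are simply false. Alternatively, using the cited fact that $\lim_{s\to\infty}\ee^{sA}$ exists, idempotency gives $M(t)=M(2^{n}t)$ for all $n$, hence $M(t)=\lim_{s\to\infty}M(s)=P^{}_{0}\lim_{s\to\infty}\ee^{sA}$, and one checks this limit equals $P^{}_{0}$ precisely when $P^{}_{0}A=\nix$.

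For the concluding statement, $\{M(t):t\geqslant 0\}$ is closed under multiplication by~(2) and matrix multiplication is associative, while~(2) also shows that $M(0)=P^{}_{0}$ is a two-sided neutral element; together with the continuity from~(1), this is a continuous monoid with unit $P^{}_{0}$. It is a homogeneous Markov semigroup $\{\ee^{tQ}:t\geqslant 0\}$ exactly when its unit is $\one$, i.e.\ when $M(0)=P^{}_{0}=\one$, and in that case~(4) collapses to $M(t)=\ee^{tA}$, which is genuinely the Markov semigroup generated by $A$. The one delicate ingredient throughout is the spectral input in~(6) --- that $0$ is a semisimple eigenvalue of $A$ with the rest of the spectrum in the open left half-plane --- which is standard for genuine Markov generators but is exactly what forces $P^{}_{0}\ee^{tA}$ to become idempotent only by collapsing onto $P^{}_{0}$.
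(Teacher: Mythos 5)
Your proof is correct, and for items (1)--(5) and the closing statement it runs along essentially the same lines as the paper: establish $M(t)=P_0\,\ee^{tA}$ from $P_0P^m=P^m$, read off the Markov property from the Poisson-weighted convex combination, and derive the semigroup law from $[P_0,A]=\nix$. The genuine divergence is in item (6). The paper's argument is a one-line cancellation: from $P_0\,\ee^{tA}=M(t)=M(t)^2=M(2t)=P_0\,\ee^{2tA}$, multiply on the right by the invertible matrix $\ee^{-tA}$ to obtain $P_0=P_0\,\ee^{tA}=M(t)$ directly, with no spectral input. You instead invoke the spectral structure of $A$ --- semisimplicity of the eigenvalue $0$ and strictly negative real parts of the remaining eigenvalues --- to show that $\ee^{2tA}-\ee^{tA}$ has the same column space as $A$ for $t>0$, whence $P_0(\ee^{2tA}-\ee^{tA})=\nix$ forces $P_0A=\nix$ and $P=P_0$. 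This is correct but considerably heavier; what it buys is the sharper conclusion that idempotency of $M(t)$ at a \emph{single} $t>0$ already forces $P=P_0$ (hence $M\equiv P_0$), which in fact removes the hedge about possible ``isolated further values of $t>0$'' in Example~\ref{ex:Pois}. Two small caveats: your parenthetical claim that every eigenvalue of $P$ other than $1$ lies strictly inside the unit disc is false in general (permutation matrices have unimodular eigenvalues $\ne 1$), although the fact you actually use --- that no nonzero eigenvalue of $A$ is purely imaginary --- is correct as justified; and your ``alternative'' argument via $\lim_{s\to\infty}\ee^{sA}$ only yields $M(t)=P_0\lim_{s\to\infty}\ee^{sA}$, which is an idempotent but not visibly equal to $P_0$ without a further step, so it does not by itself close item (6). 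Neither caveat affects the main line of your proof.
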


\begin{proof}
  (1) is obvious, while (2) follows from a standard calculation with
  the convergent series. Both $P^{}_{0}$ and $P$ are Markov, and so is
  $P^m$ for all $m\in\NN$. Now, for any $t\geqslant 0$, $M(t)$ is a
  con\-vergent, convex combination of Markov matrices, hence Markov as
  well, which shows (3).
  
  Next, claim (4) and the easy direction of (5) follow from elementary
  calculations, using $P^{}_{0} \ts \ee^{tP} = P^{}_{0} +
  \sum_{m\geqslant 1} \frac{t^m}{m !} P^m$ together with  
  $\ee^{t P} = \ee^t \ee^{t A}\nts$. When $M (t) = P^{}_{0}$ for all
  $t\geqslant 0$, one obtains $P^{}_{0} + \ee^{tP} = \one + \ee^t
  P^{}_{0}$. But, {observing $\ee^{t P^{}_0} = \one - P^{}_{0}
  + \ee^t P_0$,} 
  this implies $\ee^{t P} = \ee^{t P^{}_{0}}$ for all
  $t\geqslant 0$ and thus $P=P^{}_{0}$.

  For (6), one direction is trivial.  The other follows from (4) by
  observing that $M (t)$ idempotent implies
  $P^{}_{0} \ts \ee^{tA} = M (t) = M (t)^2 = M(2\ts t) = P^{}_{0}\ts
  \ee^{2 \ts t A}$ and hence gives the relation
  $P^{}_{0} = P^{}_{0} \ts \ee^{tA} = M(t)$ as claimed.
  
  Finally, while the (abstract) semigroup and monoid properties are
  clear, the family can only be a homogeneous Markov semigroup when
  $M(t) = \ee^{t Q}$ for some generator $Q$ and all $t\geqslant 0$, so
  $M(0)=\one$, which is the only non-singular idempotent in $\cM_d$ by
  Lemma~\ref{lem:idem}. We thus have $P^{}_{0} = \one$ in this case,
  and (4) implies $M(t) = \ee^{tA}$ as claimed.
\end{proof}

\begin{example}\label{ex:Pois}
  Let us analyse the meaning of Proposition~\ref{prop:Pois} for
  $d\geqslant 2$.  If $P^{}_{0}=\one$, there is no further restriction
  on $P$, and $M(t) = \ee^{t A}$ with $A=P\nts -\one$, which can thus
  be any generator with diagonal elements in $[-1,0]$. This way,
  possibly after rescaling $t$, all embeddable matrices are covered.
  For $P\in\cC_d$ with $c\in [\ts 0,1)$, we see that $A=P-\one$ is a
  generator of equal-input type, in line with Lemma~\ref{lem:Kendall}
  and Proposition~\ref{prop:ei-embed}.
  
  If $P^{}_{0} \in \cC^{}_{d,1}$, we get
  $P^{}_{0} = \sum_{i=1}^{d} \beta_i E_i$ with $\beta_i \geqslant 0$
  and $\beta^{}_{1} \nts + \ldots + \beta^{}_{d} = 1$, hence
  $P^{}_{0} = P \ts P^{}_{0}$ for any $P\in\cM_d$.  Assuming
  $P P^{}_{0} = P^{}_{0} P = P$, we find $P = P^{}_{0}$, which gives
  $M(t) \equiv P^{}_{0}$ by Proposition~\ref{prop:Pois}{\ts}(5).
  
  For $d=2$, where $\cM^{}_{2} = \cC^{}_{2}$, this exhausts all cases
  because no further idempotents exist. Consequently, $M\in\cM_2$ is
  infinitely divisible if and only if it is embeddable or an
  idempotent, with $M\nts = \one$ being the only case that is both;
  compare Proposition~\ref{prop:mon-2} and Corollary~\ref{coro:div-2}.

  When $d\geqslant 3$, one obtains mixtures via direct sums, where
  $\one \oplus P^{}_{0}$ and $(\one \nts + \nts \nts A) \oplus P^{}_{0}$ 
  lead to $M(t) = \ee^{t A} \oplus P^{}_{0}$. There are further examples
  for $d=3$, compare Example~\ref{ex:idem},  such as
\[
  P^{}_{0} \, = \, \alpha E^{}_{1,1,3} + (1\nts -\alpha) E^{}_{1,3,3}
  \, = \, \begin{pmatrix} 1 & 0 & 0 \\
   \alpha & 0 & 1\! - \nts\nts \alpha \\
    0 & 0 & 1 \end{pmatrix} ,  \quad
    \text{with } \det (P^{}_{0}) = 0 \ts ,
\]  
which is idempotent for any $\alpha \in [\ts 0,1]$. Then,
$P\in\cM^{}_{3}$ with $P \ts P^{}_{0} = P^{}_{0} \ts P = P$ leads to
\[
  P \, = \, \begin{pmatrix} a & 0 & 1\! - \nts\nts a \\
    c & 0 & 1\! - \nts\nts c \\ 1\! - \nts\nts b & 0 & b \end{pmatrix}
\] 
with $a,b \in [\ts 0,1]$ and
$ c = \alpha \ts a + (1\nts -\alpha) (1\nts -b)$.  Here,
$M (t) = P^{}_{0} \ts \ee^{tA}$ is singular for all $t\geqslant 0$ and
thus never embeddable. Moreover, for $P\ne P^{}_{0}$, the matrix
$M (t)$ can only be idempotent for $t=0$ and possibly for
\emph{isolated} further values of $t>0$. This is so because
$M (t) = M (t)^2 = M (2 \ts t)$ implies
$\ee^{t P} = \ee^{ t P^{}_{0}}$ via an elementary calculation.  When
this holds for $t$ from a set with an accumulation point, $t^{}_0$
say, standard arguments imply $P=P^{}_{0}$.  So, more interesting as
well as more complicated cases emerge in
$\cM_d \nts \setminus \nts \cC_d$ as $d$ grows.  \exend
\end{example}

\begin{lemma}\label{lem:dicho}
  With\/ $P^{}_{0}$, $P$ and\/ $M (t)$ as in
  Proposition~\textnormal{\ref{prop:Pois}}, one has either\/
  $\ts\det \bigl( M (t) \bigr) > 0$ for all\/ $t\geqslant 0$, which
  happens if and only if\/ $P^{}_{0} = \one$, or\/
  $\ts\det \bigl( M (t)\bigr) = 0$ for all\/ $t\geqslant 0$, which is
  true whenever\/ $P^{}_{0} \ne \one$, equivalently whenever\/
  $P^{}_{0}$ is singular.
\end{lemma}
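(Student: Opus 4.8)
The plan is to reduce everything to a single determinant computation, using the factorisation already recorded in Proposition~\ref{prop:Pois}\,(4). First I would invoke that part to write $M(t) = P^{}_{0}\ts\ee^{tA}$ with $A = P-\one$, and then apply multiplicativity of the determinant together with the identity $\det(\ee^{tA}) = \ee^{t\tr(A)}$ (Jacobi's formula, or the SMT) to obtain
\[
   \det\bigl( M(t) \bigr) \, = \, \det (P^{}_{0}) \,\ts \ee^{t\tr (A)}
\]
for every $t\geqslant 0$. Since $\ee^{t\tr(A)} > 0$ irrespective of the sign of $\tr(A)$, the sign of $\det\bigl(M(t)\bigr)$ is constant in $t$ and is governed entirely by the factor $\det(P^{}_{0})$.

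Next I would use that $P^{}_{0}$ is a Markov idempotent: from $P^{2}_{0} = P^{}_{0}$ one gets $\det(P^{}_{0})^2 = \det(P^{}_{0})$, so $\det(P^{}_{0}) \in \{0,1\}$, with no intermediate possibility. Hence either $\det\bigl(M(t)\bigr) = \ee^{t\tr(A)} > 0$ for all $t\geqslant 0$, or $\det\bigl(M(t)\bigr) = 0$ for all $t\geqslant 0$. It then remains only to match the two alternatives with the stated characterisations. The case $\det(P^{}_{0}) = 1$ says precisely that $P^{}_{0}$ is a non-singular idempotent, which by Lemma~\ref{lem:idem} is equivalent to $P^{}_{0} = \one$; the complementary case $\det(P^{}_{0}) = 0$ is exactly $P^{}_{0} \ne \one$, i.e.\ $P^{}_{0}$ singular.

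There is essentially no obstacle here beyond correctly invoking the earlier results; the only mild point to watch is that one must \emph{not} assume $\tr(A)\leqslant 0$ or use any sign information about it, since the argument works for arbitrary real $\tr(A)$ precisely because the exponential factor $\ee^{t\tr(A)}$ is strictly positive. One could alternatively bypass Proposition~\ref{prop:Pois}\,(4) and argue from $M(0) = P^{}_{0}$, continuity of $t\mapsto\det\bigl(M(t)\bigr)$, and the monoid relation $M(t)M(s) = M(t+s)$, but the factorisation route is cleaner and gives the result at once.
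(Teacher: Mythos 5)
Your argument is correct and essentially the same as the paper's: both reduce the claim to multiplicativity of the determinant together with $\det(P^{}_{0})\in\{0,1\}$ for an idempotent and Lemma~\ref{lem:idem} to identify the non-singular case with $P^{}_{0}=\one$. The only cosmetic difference is that the paper handles the singular case via the relation $P^{}_{0}\ts M(t)=M(t)$ rather than the full factorisation $M(t)=P^{}_{0}\ts\ee^{tA}$, which changes nothing of substance.
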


\begin{proof}
  Recall via Lemma~\ref{lem:idem} that $P^{2}_{0} = P^{}_{0}$ either
  means $\det (P^{}_{0}) = 1$, which forces $P^{}_{0} = \one$, or
  $\det (P^{}_{0}) = 0$. Now, observe that
  $P^{}_{0} \ts\ts M(t) = M(t)$ holds for all $t\geqslant 0$, so
\[
    \det (P^{}_{0}) \det \bigl( M(t)\bigr)
    \, = \: \det \bigl( M(t) \bigr),
\]  
and $\det \bigl( M (t)\bigr) \equiv 0$ for singular $P^{}_{0}$ is
immediate.
  
The only remaining case is $P^{}_{0} = \one$.  Here,
Proposition~\ref{prop:Pois}{\ts}(3) implies $M(t) = \ee^{tA}$ with
$A=P\nts-\one$ and hence
$\det \bigl( M(t)\bigr) = \ee^{\tr ( t A)} > 0$.
\end{proof}

A semigroup as in Proposition~\ref{prop:Pois} is called
\emph{Poissonian} if $P^{}_{0} = \one$, and
\emph{pseudo{\ts}-Poissonian} otherwise \cite[Sec.~X.1]{Feller}. We
can now recall the central classification result on infinitely
divisible, finite-dimensional Markov matrices from \cite{GMMS} as
follows. {It seems a bit hidden in the literature, but nicely} 
underpins the role of idempotents in the embedding problem.

\begin{theorem}\label{thm:divisible}
  A Markov matrix\/ $M\in\cM_d$ is infinitely divisible if and only if
  there are Markov matrices\/ $P^{}_{0}, P \in \cM^{}_{d}$, with\/
  $P^{2}_{0} = P^{}_{0}$ and\/ $P^{}_{0} P = P P^{}_{0} = P$, and
  some\/ $s\geqslant 0$ such that
\[
    M \, = \, \ee^{-s} \Bigl( P^{}_{0} + \sum_{m=1}^{\infty}
     \myfrac{s^m}{m!} \ts P^m \Bigr) .
\]  
  Moreover, $M$ is embeddable if and only
  if one also has\/ $P^{}_{0} = \one$.  
\end{theorem}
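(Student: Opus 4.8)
The plan is to establish the two biconditionals separately, deriving the elementary implications from Proposition~\ref{prop:Pois} and Lemma~\ref{lem:dicho} and drawing on \cite{GMMS} for the one genuinely substantial step.

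I would begin with the ``if'' part. Suppose $M$ has the displayed form for an idempotent $P^{}_{0}$, a Markov matrix $P$ with $P^{}_{0} P = P P^{}_{0} = P$, and some $s\geqslant 0$. Then $M = M(s)$ in the notation of Proposition~\ref{prop:Pois}, so for every $n\in\NN$ the matrix $R \defeq M(s/n)$ is Markov by part~(3) of that proposition and satisfies $R^{n} = M(s) = M$ by the monoid property in part~(2). Hence $M$ possesses a Markov $n$-th root for each $n$, i.e.\ it is infinitely divisible. For the converse I would invoke the classification of infinitely divisible finite-dimensional Markov matrices from \cite{GMMS}, which supplies the triple $(P^{}_{0}, P, s)$; should the normal form used there differ cosmetically from ours, the identities $M(t) = \ee^{-t}\bigl(P^{}_{0} - \one + \ee^{tP}\bigr) = P^{}_{0}\ts\ee^{tA}$ with $A = P - \one$ from Proposition~\ref{prop:Pois} bridge the gap at once. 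I note in passing that the non-singular case is itself elementary: if $M$ is non-singular and infinitely divisible, then $M = \ee^{Q}$ by Kingman's criterion \cite[Prop.~7]{King}, and taking $s>0$ large enough that $P \defeq \one + \tfrac{1}{s} Q$ is Markov gives the stated form with $P^{}_{0} = \one$; the real content of \cite{GMMS} lies in locating the correct singular idempotent $P^{}_{0} \ne \one$ otherwise.

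It then remains to treat the embeddability clause. If $P^{}_{0} = \one$, Proposition~\ref{prop:Pois}(4) gives $M = M(s) = \ee^{sA}$ with $A = P - \one$ a generator, so $M$ is embeddable. Conversely, if $M = \ee^{Q}$ is embeddable, then $\det(M) = \ee^{\tr(Q)} > 0$, so $M$ is non-singular; applying Lemma~\ref{lem:dicho} to the representation $M = M(s)$ rules out the alternative $P^{}_{0} \ne \one$, which would force $\det\bigl(M(t)\bigr) \equiv 0$. In fact $\det(M)$ alone decides the dichotomy $P^{}_{0} = \one$ versus $P^{}_{0} \ne \one$, so the clause is well posed irrespective of any non-uniqueness of $(P^{}_{0}, P, s)$.

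The main obstacle is exactly the singular branch of the ``only if'' direction: extracting a suitable idempotent $P^{}_{0} \ne \one$ and a compatible Markov matrix $P$ from an arbitrary singular, infinitely divisible $M$. Unlike the non-singular case, this cannot be done by a short computation with the matrix exponential, and it is precisely the point where the structural input of \cite{GMMS} --- the (pseudo-)Poissonian description --- is indispensable.
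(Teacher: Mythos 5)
Your proposal is correct and follows essentially the same route as the paper: the substantial ``only if'' direction of the first claim is delegated to \cite{GMMS}, while the easy direction and the embeddability clause are handled via Proposition~\ref{prop:Pois} (parts (2)--(4)) and the determinant dichotomy of Lemma~\ref{lem:dicho}. Your added remarks --- the explicit $n$-th roots $M(s/n)$ and the elementary treatment of the non-singular case via Kingman --- are sound but do not change the argument in substance.
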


\begin{proof}
  For the proof of the first claim, we refer to \cite{GMMS}. 
  
  For the second claim, we know that $M$ embeddable implies
  $\det (M) > 0$, and we are in the case with $P^{}_{0} = \one$ by
  Lemma~\ref{lem:dicho}. Conversely, when $P^{}_{0} = \one$,
  Proposition~\ref{prop:Pois}{\ts}(4) gives $M = \ee^{sA}$ with the
  generator $A=P\nts -\one$.
\end{proof}

Note that the parameter $s$ cannot be avoided in this formulation
because a generator $Q$ can have {diagonal entries of
arbitrarily large negative value,}
whence $\one + Q$ need not be Markov, while $\one + s \ts Q$, for all
suitably small $s>0$, will be; compare Example~\ref{ex:Pois} and
Proposition~\ref{prop:Pois}.

Further consequences can be derived from $[P^{}_{0}, P \ts ]=\nix$
when $P$ is \emph{cyclic}, which means that minimal and characteristic
polynomial of $P$ agree. In particular, this is the case when $P$ is
simple; see \cite[Fact~2.10]{BS} for a systematic characterisation of
cyclic matrices. Whenever $P\in\cM_d$ is cyclic, its
\emph{centraliser} is the Abelian ring
\[
    \Co (P) \, = \, \{ B \in \Mat (d, \RR) :
      [ P , B \ts ] = \nix \}  \, = \, \RR [P \ts ] \ts ,
\]
where each element of this ring is of the form
$\sum_{n=0}^{d-1} \alpha^{}_{n} P^n$ with all $\alpha^{}_{n} \in \RR$,
as a consequence of the Cayley--Hamilton theorem.  In particular,
$P^{}_{0}$ is then an idempotent from this ring. We leave further
details to the interested reader.

\section{Monotone Markov matrices in three
    dimensions}\label{sec:three}

  Let us now look at $d=3$ in more detail, where we state the
  following simple and certainly well-known property, which we 
  {also} prove due to lack of reference.
  
\begin{prop}\label{prop:mon-3}
  The eigenvalues of any\/ $M\in\cM^{}_{3,\preccurlyeq}$ are real.
  Moreover, at most one eigenvalue of\/ $M$ can be negative, which
  happens if and only if\/ $\det (M) < 0$.
   
  Further, if\/ $d=3$ and\/ $Q$ is a monotone Markov generator, its
  eigenvalues are non-positive, real numbers.
\end{prop}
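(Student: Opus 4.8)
The plan is to conjugate with the lower-triangular matrix $T$ from Eq.~\eqref{eq:T-def} and thereby reduce each of the three claims to a statement about a single $2\times 2$ matrix. First I would record the following structural observation, valid for \emph{any} $M\in\cM^{}_{3}$: if $u$ denotes the all-ones column vector, then $Mu=u$ and $Tu = \ldots$ — more precisely $Te^{}_{1}=u$ — so that $N\defeq T^{-1}MT$ satisfies $Ne^{}_{1}=T^{-1}Mu=T^{-1}u=e^{}_{1}$, i.e.\ the first column of $N$ equals $e^{}_{1}$. If in addition $M$ is monotone, then $N\geqslant\nix$ entrywise by Fact~\ref{fact:monotone}{\ts}(3). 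Hence $N$ is block upper triangular, with the scalar block $(1)$ in the top-left corner and a non-negative $2\times 2$ block $B$ in the bottom-right corner, and since $N$ is similar to $M$, the eigenvalues of $M$ are $1$ together with the two eigenvalues of $B$.

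Now $B$ is a non-negative $2\times 2$ matrix, so the discriminant of its characteristic polynomial equals $(\text{difference of its diagonal entries})^{2}+4\ts(\text{product of its off-diagonal entries})\geqslant 0$; thus $B$, and therefore $M$, has real eigenvalues, which is the first claim. For the second claim, write $\mu^{}_{\pm}$ for the eigenvalues of $B$: their sum is $\tr(B)\geqslant 0$ and their product is $\det(B)=\det(N)=\det(M)$. A non-negative sum already shows that at most one of $\mu^{}_{\pm}$ can be negative, so — together with the eigenvalue $1>0$ of $M$ — at most one eigenvalue of $M$ is negative. Moreover, if $\det(M)\geqslant 0$, then $\mu^{}_{\pm}$ are real with non-negative sum and non-negative product, hence both $\geqslant 0$, while $\det(M)<0$ forces exactly one of them to be negative. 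Thus $M$ has a negative eigenvalue (necessarily the only one) precisely when $\det(M)<0$.

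For the statement about generators, the very same conjugation works. If $Q$ is a Markov generator, its vanishing row sums give $Qu=0$, so the first column of $R\defeq T^{-1}QT$ is zero; and $Q$ being a \emph{monotone} generator means, by definition, exactly that the off-diagonal entries of $R$ are non-negative. Hence $R$ is block upper triangular with top-left block $(0)$ and a bottom-right $2\times 2$ block $B'$ whose two off-diagonal entries are $\geqslant 0$. As before, the discriminant of the characteristic polynomial of $B'$ is $(\text{difference of diagonal entries})^{2}+4\ts(\text{product of the two off-diagonal entries})\geqslant 0$, so $B'$ — and thus $Q$, whose spectrum is $\{0\}$ together with that of $B'$ — has real eigenvalues. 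It then remains only to see that these real eigenvalues are $\leqslant 0$, which follows because $\one+\alpha^{-1}Q$ is a Markov matrix for all sufficiently large $\alpha>0$ (as in the proof of Proposition~\ref{prop:mon-gen}), hence has spectral radius $\leqslant 1$: a real eigenvalue $\mu$ of $Q$ corresponds to the eigenvalue $1+\mu/\alpha\in[-1,1]$ of $\one+\alpha^{-1}Q$, forcing $\mu\leqslant 0$.

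None of the steps is technically deep; the computations reduce to the $2\times 2$ discriminant identity. The two points that need a little care are (i) the observation that the first column of $T^{-1}MT$ (resp.\ $T^{-1}QT$) is forced by the Markov (resp.\ generator) structure, so that the conjugated matrix is genuinely block triangular, and (ii) in the generator case, keeping track of which entries of the relevant $2\times 2$ block the monotonicity hypothesis actually constrains — it constrains precisely the two off-diagonal ones, and that is all the discriminant argument requires. The passage from ``real'' to ``non-positive'' in the last claim is the only place where the generator property, as opposed to the monotone structure alone, enters.
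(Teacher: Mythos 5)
Your proof is correct, and it reaches the paper's conclusion by what is at bottom the same computation, but packaged more structurally. The paper verifies ``by an explicit calculation'' that the discriminant of the quadratic factor of the characteristic polynomial equals $(m^{}_{11}-m^{}_{21}+m^{}_{23}-m^{}_{33})^2 + 4\ts(m^{}_{23}-m^{}_{13})(m^{}_{21}-m^{}_{31})$ and then invokes the monotonicity conditions of Example~\ref{ex:mon-con}; you instead observe that $Mu=u$ forces the first column of $N=T^{-1}MT$ to be $e^{}_{1}$, so that $N$ is block upper triangular with a non-negative $2\times2$ block $B$, and the paper's discriminant identity is then just $(\operatorname{tr}$-difference$)^2+4\ts(\text{off-diagonal product})$ for $B$. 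Indeed, one checks that $B=\bigl(\begin{smallmatrix} m_{11}-m_{21} & m_{23}-m_{13}\\ m_{21}-m_{31} & m_{33}-m_{23}\end{smallmatrix}\bigr)$, so the two arguments are entry-for-entry identical; what your version buys is that the non-negativity of the discriminant is no longer a verification but a visible consequence of Fact~\ref{fact:monotone}{\ts}(3), and the ``at most one negative eigenvalue'' claim falls out of $\operatorname{tr}(B)=\operatorname{tr}(M)-1\geqslant 0$ together with $\det(B)=\det(M)$, rather than from the explicit formula \eqref{eq:eig} plus Corollary~\ref{coro:trace}. Your treatment of the generator case is likewise sound: the monotonicity hypothesis constrains exactly the two off-diagonal entries of the lower-right block of $T^{-1}QT$, which is all the discriminant needs, and your self-contained derivation of $\mu\leqslant 0$ via the Markov matrix $\one+\alpha^{-1}Q$ replaces the paper's citation of \cite[Prop.~2.3{\ts}(1)]{BS}. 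The block-triangular observation would generalise to any $d$ (eigenvalues of a monotone $M$ are $1$ together with those of a non-negative $(d-1)\times(d-1)$ matrix), which the paper's coordinate computation does not make apparent.
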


\begin{proof}
  First, $M\in\cM^{}_{3,\preccurlyeq}\subset \cM^{}_{3}$ implies
  $1\in\sigma (M)$. As $d=3$, the characteristic poly\-nomial of 
  $M$ then is
  $(1-x) \bigl( x^2 - (\tr (M) - 1) x + \det (M) \bigr)$, and the
  remaining two eigenvalues are
\begin{equation}\label{eq:eig}
  \lambda^{}_{\pm} \, = \, \myfrac{1}{2}
  \bigl( \tr (M) - 1 \pm \sqrt{\Delta} \, \bigr) ,
\end{equation}
with the discriminant $\Delta = ( \tr (M) - 1 )^2 - 4 \ts \det
(M)$. By an explicit calculation, where one first eliminates
$m^{}_{22}$ and later also $m^{}_{11}$ via the row sum condition, one
verifies that
\[
  \Delta \, = \,
   \bigl( m^{}_{11} - m^{}_{21} + m^{}_{23} - m^{}_{33} \bigr)^2
   + \ts 4 \ts \bigl( m^{}_{23} - m^{}_{13} \bigr)
   \bigl( m^{}_{21} - m^{}_{31} \bigr)  \, \geqslant \, 0 \ts ,
\]  
  where the inequality follows from the monotonicity of $M$ via 
  Example~\ref{ex:mon-con}.
  
  This implies $\sigma (M) \subset \RR$, and the formula for
  $\lambda^{}_{\pm}$ from Eq.~\eqref{eq:eig} shows that at most
  $\lambda^{}_{-}$ can be negative, because $\tr (M) \geqslant 1$ by
  Corollary~\ref{coro:trace}.  Also, when $\det (M) = 0$, the spectrum
  is $\sigma (M) = \{ 1, \tr (M) - 1, 0 \}$, which is non-negative.
  This establishes the claims on $M$.
  
  If $Q = (q^{}_{ij})^{}_{1\leqslant i,j \leqslant 3}$ is a Markov
  generator, its spectrum contains $0$, while the other two
  eigenvalues are given by
\begin{equation}\label{eq:eig-Q}
  \mu^{}_{\pm} \, = \, \myfrac{1}{2}
  \bigl( \tr (Q) \pm \sqrt{D}\, \bigr) ,
\end{equation}  
  where, in analogy to above, one finds
\[
  D \, = \, \bigl( q^{}_{11} - q^{}_{21} + q^{}_{23}
    - q^{}_{33} \bigr)^2
    + \ts 4 \ts \bigl( q^{}_{23} - q^{}_{13} \bigr)
    \bigl( q^{}_{21} - q^{}_{31} \bigr)
     \, \geqslant \, 0 \ts .
\]  
Here, the inequality follows via the monotonicity criterion from
Example~\ref{ex:gen-con}.
  
Consequently, all eigenvalues are real. They are then non-positive
because all eigenvalues of Markov generators have real part
$\leqslant 0$, compare \cite[Prop.~2.3{\ts}(1)]{BS}, as can also be
checked explicitly from \eqref{eq:eig-Q}, where
$\tr (Q) \leqslant 0 \leqslant \sqrt{D} \ts \leqslant \lvert \tr (Q)
\rvert$.
\end{proof}

Considering the convex combinations
\[
  M(\alpha) \, \defeq \, \alpha \ts E^{}_{1,1,2} +
  (1\nts \nts - \nts \alpha) E^{}_{2,3,3}
  \, = \, \begin{pmatrix} \alpha & 1\! - \nts\nts \alpha & 0 \\ 
    \alpha & 0 & 1\! - \nts \nts \alpha \\
        0 & \alpha & 1\! - \nts \nts \alpha 
  \end{pmatrix} \quad \text{with } \alpha \in (0,1) \ts , 
\]
in the notation of Table~\ref{tab:extremal}, one finds
$\tr ( M(\alpha)) = 1$ and
$\det (M(\alpha)) = - \alpha \ts (1 \nts -\alpha) < 0$. This shows
that cases with a simple negative eigenvalue exist. On the other hand,
Proposition~\ref{prop:mon-3} also means that the spectrum of a matrix
$M\in\cM^{}_{3,\preccurlyeq}$ is positive if and only if
$\det (M) > 0$.

\begin{coro}\label{coro:mon-3}
  Consider any\/ $M \in \cM^{}_{3,\preccurlyeq}$ with\/ $\det (M)<0$.
  Then, $M$ is neither em\-beddable nor can it have a monotone\/
  $n$-th Markov root for\/ $n$ even. In fact, $M$ has no Markov or,
  indeed, real square root at all, and\/ $M$ is not infinitely divisible.
\end{coro}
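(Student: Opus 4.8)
The plan is to reduce everything to the single arithmetic observation that $\det(M) < 0$ is incompatible both with $M$ being a matrix exponential and with $M$ being an even power of a real matrix. Monotonicity enters only through Proposition~\ref{prop:mon-3}, which has already told us that such $M$ genuinely exist --- with a simple negative eigenvalue, as the family $M(\alpha)$ exhibits --- and that the remaining spectral data are real; beyond that, the proof is purely a determinant count, so I expect no real obstacle.

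First I would rule out embeddability: if $M = \ee^Q$ for a generator $Q$, then $\det(M) = \ee^{\tr(Q)} > 0$, contradicting $\det(M) < 0$. Next, suppose $M = R^2$ for some $R\in\Mat(3,\RR)$, in particular for a Markov $R$. Then $\det(M) = \bigl(\det(R)\bigr)^2 \geqslant 0$, again impossible; hence $M$ has no real, and a fortiori no Markov, square root. The statement on monotone $n$-th roots for even $n$ is then immediate: writing $n = 2k$, any Markov --- or merely real --- $n$-th root $R$ of $M$ would give $M = \bigl(R^k\bigr)^2$ with $R^k$ real, and the previous step applies. I would note that the word \emph{monotone} plays no role here, and that the odd case is, correctly, not claimed, since a negative eigenvalue does admit a real odd root.

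Finally, if $M$ were infinitely divisible it would in particular have a Markov square root (the case $n=2$), which has just been excluded. Equivalently, by Theorem~\ref{thm:divisible} an infinitely divisible $M$ equals $M(s)$ for suitable $P^{}_{0}, P$ and $s\geqslant 0$, whence $\det(M) \geqslant 0$ by Lemma~\ref{lem:dicho}; either route turns $\det(M) < 0$ into a contradiction. The only point requiring a little care is keeping the even-root statement strictly separate from the odd case and checking that it suffices for $R^k$ to be real, so that no hypothesis on the roots beyond what defines the setting is used.
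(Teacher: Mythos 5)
Your proof is correct, and for the core of the statement it takes a genuinely more elementary route than the paper. The paper rules out a real square root by invoking the spectral criterion for real matrix roots (a simple negative eigenvalue obstructs a real square root, citing Higham and \cite{HL}), and it excludes monotone even roots by using Proposition~\ref{prop:mon-3} a second time: a monotone Markov root $B$ has real spectrum, so $M=B^{2m}$ would have non-negative spectrum, contradicting $\det(M)<0$. You replace both steps by the single observation that $M=R^{2}$ or $M=\bigl(R^{k}\bigr)^{2}$ with $R$ real forces $\det(M)=\bigl(\det(R^{k})\bigr)^{2}\geqslant 0$. This is sound, needs no citation, and in fact proves something slightly stronger than the stated corollary: $M$ has no \emph{real} $n$-th root for any even $n$, so the hypotheses ``monotone'' and ``Markov'' in that clause are indeed superfluous, as you note. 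What the paper's spectral route buys is structural information (the obstruction is precisely the simple negative eigenvalue, connecting to the general theory of real matrix roots for $d>3$ where a negative determinant no longer pins down the eigenvalue configuration); what yours buys is brevity and independence from Proposition~\ref{prop:mon-3} for everything except the assertion that such $M$ exist. The embeddability and infinite-divisibility steps coincide with the paper's.
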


\begin{proof}
  Since $\det (M) < 0$, embeddability is ruled out immediately, and so
  is the existence of a real square root because $M$ has only a simple
  negative eigenvalue; compare \cite[Thm.~6.6]{Higham} or \cite{HL}.
  Consequently, $M$ cannot be infinitely divisible.

  Further, $B \in \cM^{}_{3,\preccurlyeq}$ with $B^{2m} = M$ for any
  $m\in\NN$ implies non-negative spectrum for $M$ due to
  $\sigma (B) \subset \RR$, which contradicts $\det (M) < 0$.
\end{proof}

Let us now analyse when a matrix $M\in\cM^{}_{3,\preccurlyeq}$
\emph{is} embeddable. For this, $M$ must be non-singular and thus, by
Corollary~\ref{coro:mon-3}, have positive spectrum. So, all
eigenvalues of $M$ must satisfy $0 < \lambda \leqslant 1$. Further,
$A = M \nts - \one$ is a generator that inherits monotonicity from $M$
as all off-diagonal elements of $T^{-1}\! A \ts\ts T$ are
non-negative.  Since the spectral radius of $A$ is $\varrho^{}_A < 1$,
\begin{equation}\label{eq:QausA}
    Q \, \defeq \, \log (\one + \nts A ) \, =
    \sum_{m=1}^{\infty} \myfrac{(-1)^{m-1}}{m} A^m
\end{equation}
converges and defines a real matrix with zero row sums and
$M \nts =\ee^Q$. As we are not interested in other types of solutions,
we now introduce the non-unital, real algebra
\begin{equation}\label{eq:algebra}
   \drei \, \defeq \, \{ B \in \Mat (3, \RR) : 
   \text{all row sums of $B$ are  $0$} \ts \} \ts ,
\end{equation}
which certainly contains the $Q$ from \eqref{eq:QausA}. Since positive
spectrum of $M$ means $\sigma (M) \subset ( 0, 1 ]$, all eigenvalues
of $Q$ are non-positive real numbers by the SMT.  It remains to
analyse the generator property and potential uniqueness of $Q$.

Let $q$ be the minimal polynomial of $A$. Its degree satisfies
$\deg (q) \in \{ 1,2,3 \}$ and equals the degree of the minimal
polynomial of $M$. Further, let
\[
  \alg (A) \, \defeq \, \langle A^m \nts :
     m \in \NN \ts \rangle^{}_{\RR}
\] 
be the real algebra spanned by the positive powers of $A$, which does
not contain $\one$, as one can check easily. In fact, one has
$\alg (A) = \langle A, A^2 \ts \rangle^{}_{\RR}$ by the
Cayley--Hamilton theorem, so $\alg (A)$ is a subalgebra of $\drei \!$
of dimension $\leqslant 2$. Indeed, we clearly get
 \begin{equation}\label{eq:dim-alg}
      \dim \bigl( \alg (A) \bigr) \, = \, \deg (q) - 1 \, \in \,
      \{ 0,1,2 \} 
 \end{equation}
 together with $ Q \in \alg (A)$ due to \eqref{eq:QausA}, which leads
 to different situations as follows.

\begin{prop}\label{prop:mon-embed}
  Let the matrix\/ $M \in \cM^{}_{3,\preccurlyeq}$ have a minimal
  polynomial of degree\/ $\leqslant 2$. Then, the following properties
  are equivalent.
\begin{enumerate}\itemsep=2pt
\item The spectrum of\/ $M$ is positive.
\item One has\/ $\det (M) > 0$.
\item $M$ is embeddable.
\item $M$ is embeddable with a monotone generator.
\end{enumerate}
If\/ $M$ is embeddable, there is precisely one monotone
generator\/ $Q$ with\/ $M = \ee^Q$, namely the one given
in Eq.~\eqref{eq:QausA}.
\end{prop}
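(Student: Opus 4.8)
The plan is to dispose of the equivalences cheaply and to put the weight on the implication ``positive spectrum $\Rightarrow$ embeddable with a monotone generator'', which will also pin down $Q$. The case $M=\one$, that is $\deg (q)=1$, is trivial: all four properties hold with $Q=\nix$, and uniqueness is Remark~\ref{rem:one}; so I assume $\deg (q)=2$. Then the characteristic polynomial has exactly two distinct roots, one of which is $1$, the other, $\lambda$, being real by Proposition~\ref{prop:mon-3}; since $\lambda=1$ would force $M=\one$ via Lemma~\ref{lem:idem}, we have $\lambda\neq1$, so $q(x)=(x-1)(x-\lambda)$ and $M$ is diagonalisable. The equivalence (1)$\Leftrightarrow$(2) is then a direct reading of Proposition~\ref{prop:mon-3} (a negative eigenvalue must be the simple one, and then $\det (M)<0$; conversely $\det (M)<0$ forces such an eigenvalue, while $\det (M)=0$ is excluded once the spectrum is positive). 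Moreover (3)$\Rightarrow$(2) because $\det (M)=\ee^{\tr (Q)}>0$, and (4)$\Rightarrow$(3) is trivial, so it remains to prove (1)$\Rightarrow$(4) together with the uniqueness claim.

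For (1)$\Rightarrow$(4): positivity of the spectrum gives $\lambda\in(0,1)$, so $A=M-\one$ has the two eigenvalues $0$ and $\mu\defeq\lambda-1\in(-1,0)$ with minimal polynomial $x(x-\mu)$, whence $A^m=\mu^{m-1}A$ for $m\geqslant1$. Since the spectral radius of $A$ equals $\lvert\mu\rvert<1$, the series in \eqref{eq:QausA} converges, and summing it with the help of $A^m=\mu^{m-1}A$ yields
\[
   Q \, = \, \sum_{m=1}^{\infty}\myfrac{(-1)^{m-1}}{m}\,A^m
     \, = \, \myfrac{\log(1+\mu)}{\mu}\,A
     \, = \, \myfrac{\log\lambda}{\lambda-1}\,(M-\one)
     \, \eqdef \, \gamma\,(M-\one) ,
\]
where $\gamma>0$ because numerator and denominator of $\frac{\log\lambda}{\lambda-1}$ are both negative. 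Hence $\ee^Q=\one+A=M$, the off-diagonal entries of $Q$ are $\gamma\, m_{ij}\geqslant0$ and its row sums vanish, so $Q$ is a generator; and $T^{-1}QT=\gamma\,(T^{-1}MT-\one)$ has non-negative off-diagonal entries by Fact~\ref{fact:monotone}{\ts}(3), so $Q$ is monotone. This establishes (4) and identifies the $Q$ of \eqref{eq:QausA}. Conceptually, this is the step where the hypothesis $\deg (q)\leqslant2$ does all the work: by \eqref{eq:dim-alg} the algebra $\alg (A)$ is at most one-dimensional, so $Q$ is forced to be a scalar multiple of $M-\one$, and a positive one at that, after which the generator and monotonicity properties are automatic.

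For uniqueness, let $\widetilde Q$ be any monotone generator with $\ee^{\widetilde Q}=M$. By Proposition~\ref{prop:mon-3} its eigenvalues are real and non-positive, so by the spectral mapping theorem $\sigma(\widetilde Q)=\{0,\log\lambda\}$, these being the only non-positive real logarithms of $1$ and of $\lambda$. Since $M=\ee^{\widetilde Q}$ is diagonalisable, so is $\widetilde Q$ (a non-trivial Jordan block of $\widetilde Q$ would, the spectrum being real, produce one of the same size in $\ee^{\widetilde Q}$), and $\widetilde Q$ commutes with $M$, hence preserves $\ker (M-\one)$ and $\ker (M-\lambda\one)$; on the first it is diagonalisable with sole eigenvalue $0$, hence zero, and on the second diagonalisable with sole eigenvalue $\log\lambda$, hence $(\log\lambda)\,\mathrm{id}$. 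Therefore $\widetilde Q=\frac{\log\lambda}{\lambda-1}(M-\one)=Q$. The main obstacle is really the one already flagged, namely forcing $Q$ to be a positive scalar multiple of $M-\one$; for matrices $M\in\cM^{}_{3,\preccurlyeq}$ with $\deg (q)=3$ the space $\alg (A)$ is two-dimensional, the generator property of $\log(\one+A)$ is no longer automatic, and uniqueness needs a separate analysis, which is carried out in Theorem~\ref{thm:drei}.
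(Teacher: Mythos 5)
Your proof is correct. The treatment of the equivalences and of the implication $(1)\Rightarrow(4)$ coincides with the paper's: in both cases the point is that $\deg(q)=2$ forces $A=M-\one$ to satisfy $A^2=\mu A$ with $\mu\in(-1,0)$, so the logarithm series collapses to a positive scalar multiple of $A$, from which the generator and monotonicity properties are inherited directly from $M$. Where you genuinely diverge is the uniqueness argument. The paper takes an arbitrary zero-row-sum solution $Q{\ts}'$ of $\ee^{Q{\ts}'}=M$, invokes \cite[Lemma~6.1]{BS} to split into the cases $\dim\alg(Q{\ts}')\in\{1,2\}$, rules out dimension $2$ because simplicity of $Q{\ts}'$ would force non-real eigenvalues (contradicting monotonicity via Proposition~\ref{prop:mon-3}), and then fixes the scalar in $Q{\ts}'=aA$ by a trace comparison. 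You instead start from an arbitrary \emph{monotone} generator $\widetilde Q$, use Proposition~\ref{prop:mon-3} to get a real non-positive spectrum, apply the spectral mapping theorem to pin down $\sigma(\widetilde Q)=\{0,\log\lambda\}$, and then identify $\widetilde Q$ on the two eigenspaces of $M$ by commutativity and diagonalisability. Your route is more self-contained (it avoids the citation to the algebra-dimension lemma and the determinant computation) and makes transparent exactly where monotonicity enters, namely in forcing the spectrum of the competing generator to be real; the paper's route has the side benefit of classifying \emph{all} real zero-row-sum logarithms of $M$, not just the monotone ones, which foreshadows the $\deg(q)=3$ analysis in Theorem~\ref{thm:drei}. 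Both are complete proofs of the stated claim.
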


\begin{proof}
  The implications $(4) \Rightarrow (3) \Rightarrow (2)$ are clear,
  while $(2) \Longleftrightarrow (1)$ follows from
  Proposition~\ref{prop:mon-3}.  It remains to show
  $(1) \Rightarrow (4)$, so assume
  $\sigma (M) \subset \RR_{+} \defeq \{x\in\RR: x>0\}$. As above, let
  $q$ be the minimal polynomial of $A=M\nts - \one$, which has the
  same degree as that of $M$.
  
  When $\deg (q) = 1$, since $0$ is always an eigenvalue of $A$, the
  only possibility is $q (x) = x$, hence $A=\nix$ and thus also
  $Q=\nix$ from \eqref{eq:QausA}, which gives the trivial case
  $\one = \exp(\nix)$, where $\one$ is the only matrix in
  $\cM^{}_{3,\preccurlyeq}$ with a minimal polynomial of degree
  $1$. By Remark~\ref{rem:one}, $Q=\nix$ is the only generator with
  $\ee^Q=\one$, which is trivially monotone.
  
  When $\deg (q) = 2$, we have $A\ne \nix$, hence $\tr (A) < 0$, so by
  \eqref{eq:dim-alg} we
  get $A^2 = - \alpha A$ for some $\alpha \in \RR$, where
  $\tr (A^2) > 0$ implies $\alpha > 0$. Here, $A=M \nts -\one$ is
  diagonalisable, with eigenvalues $0$ and $-\alpha > -1$, in line
  with $\sigma (M) \subset \RR_{+}$. Then, \eqref{eq:QausA} simplifies
  to $Q = - \frac{\log (1-\alpha)}{\alpha} A$, which is a positive
  multiple of $A$ and hence a monotone generator, so $M$ is embeddable
  as $M=\ee^Q$.

  To establish the uniqueness claim, consider any $Q{\ts}' \in \drei$
  such that $M = \ee^{Q{\ts}'}\!$, where $M$ is diagonalisable by
  assumption, hence also $Q{\ts}'$ by \cite[Fact~2.15]{BS}.  We then
  still have $[Q{\ts}', A]=\nix$, but not necessarily
  $Q{\ts}'\in\alg (A)$. Now, by \cite[Lemma~6.1]{BS}, there are two
  possibilities, namely
  $\dim \bigl(\alg (Q{\ts}' \ts ) \bigr) \in \{ 1,2 \}$.  Here, if the
  dimension is $2$, $Q{\ts}'$ must be simple, which is only possible
  if $Q{\ts}'$ has a complex-conjugate pair of (non-real)
  eigenvalues. But then, $Q{\ts}'$ cannot be monotone, by
  Proposition~\ref{prop:mon-3}. It remains to consider
  $ \dim \bigl(\alg (Q{\ts}' \ts ) \bigr) =1$, where we get
  $\alg (Q{\ts}' \ts) = \alg (A)$ from \cite[Lemma~6.1{\ts}(1)]{BS},
  hence $Q{\ts}' = a A$ for some $a>0$. By taking the determinant on
  both ends of $\ee^{Q{\ts}'} \! = M = \ee^Q$, which gives a positive
  number, one finds
 \[
   a \, \tr (A) \, = \, - \myfrac{\log (1-\alpha)}{\alpha}
   \, \tr (A) \ts .
 \]
 As $\tr (A) \ne 0$, this implies
 $a= - \frac{\log (1-\alpha)}{\alpha}$ and thus $Q{\ts}' \nts = Q$.
\end{proof}  
      
Let us pause to state the asymptotic behaviour of $M^n$ for the
embeddable matrices covered by Proposition~\ref{prop:mon-embed}.  In
the above notation, one trivially has $M^n=\one$ for all $n$ when
$\deg (q) = 1$, while a simple calculation gives
\[
   M_{\infty} \, \defeq \lim_{n\to\infty} M^n \, = \, \one \ts + \nts
   \lim_{n\to\infty} \! \frac{1 - (1-\alpha)^n}{\alpha} A 
   \, = \, \one + \frac{1}{\alpha} A
\]
for the more interesting case that $\deg (q) = 2$.   
\smallskip
      
When $\deg (q) = 3$, the situation becomes a little more complex.
Here, $A=M\nts - \one$ is always cyclic, with $0$ being a simple
eigenvalue. Then, we obtain $A^3 = r A + s A^2$ with
$r = \tr (M) - \det (M)-2$ and $s=\tr (A)$. Since
$\sigma (A) = \{ 0, \mu^{}_{+}, \mu^{}_{-} \}$, where $\mu^{}_{\pm}$
are negative numbers by Proposition~\ref{prop:mon-3}, we get
$r = - \mu^{}_{+} \mu^{}_{-} < 0$ and
$s = \mu^{}_{+} \nts + \mu^{}_{-} < 0$.  This remains correct when
$\mu^{}_{+} = \mu^{}_{-}$, where $A$ has a non-trivial Jordan normal
form (as it is cyclic). Note that, although $A$ is a \emph{real}
matrix, it is more convenient, and also completely consistent, to
always employ the \emph{complex} Jordan normal form of $A$ in our
arguments.
 
Let us first consider the case that $A$ is \emph{simple} (and hence
also diagonalisable). Here, we have
$-1 < \mu^{}_{-} \nts < \mu^{}_{+} \nts < 0$ together with
$\sigma (M) = \{ 1, 1+\mu^{}_{+}, 1+\mu^{}_{-} \}$.  As $A$ is cyclic,
any matrix $Q \in \drei$ with $M=\ee^Q$ must lie in
$\alg (A) = \RR [A] \cap \drei$, so $Q=\alpha A + \beta A^2$ for some
$\alpha,\beta \in \RR$, again by Frobenius' theorem. Then, the SMT
implies
\[
   \alpha \ts \mu^{}_{\pm} \nts + \beta \ts \mu^{2}_{\pm} \, = \,
   \log ( 1+ \mu^{}_{\pm}) \, \in \, \RR \ts ,
\]    
 which is an inhomogeneous system of linear equations for $\alpha$
 and $\beta$. As
\[
    \det \begin{pmatrix} \mu^{}_{+} & \mu^{2}_{+} \\
    \mu^{}_{-} & \mu^{2}_{-} \end{pmatrix} \, = \,
    \mu^{}_{+} \ts \mu^{}_{-}(\mu^{}_{-} \nts - \mu^{}_{+} ) 
    \, < \, 0 \ts ,
\] 
  we get a unique solution, which is given by
\begin{equation}\label{eq:albe}
    \alpha \,  = \, \frac{\mu^{2}_{-} \log (1+\mu^{}_{+}) -
      \mu^{2}_{+} \log (1 +\mu^{}_{-})}{ \mu^{}_{+} \mu^{}_{-}
      (\mu^{}_{-} \nts - \mu^{}_{+} ) } \, , \quad
    \beta \,  = \,  \frac{\mu^{}_{+} \log (1+\mu^{}_{-}) -
      \mu^{}_{-} \log (1 +\mu^{}_{+})}{ \mu^{}_{+} \mu^{}_{-}
      (\mu^{}_{-} \nts - \mu^{}_{+} ) } \ts .
\end{equation}
This shows that $M = \ee^Q$ has precisely one solution with
$Q\in \drei$, which must be the one from \eqref{eq:QausA}.  One can
check via the Taylor series that $\alpha > 0$ and $\beta < 0$, though
this is not sufficient to guarantee the generator or the monotonicity
property of $Q$.  So {far,} we have derived the following 
{constructive} result.
  
\begin{theorem}\label{thm:drei}      
  Let\/ $M\nts\in\cM_{{3,\preccurlyeq}}$ have simple spectrum
  with\/ $\det (M) > 0$, and set\/ $A = M \nts - \one$, with\/
  $\sigma (A) = \{ 0, \mu^{}_{+}, \mu^{}_{-} \}$ as above. Then, there
  is precisely one\/ $Q \in \drei$ such that\/ $M \nts = \ee^Q$,
  namely the matrix\/ $Q$ from~\eqref{eq:QausA}, which also
  satisfies\/ $\ts Q = \alpha A + \beta A^2$ with\/ $\alpha, \beta$
  from~\eqref{eq:albe}.
  
  Further, $Q$ is a generator, and\/ $M \nts =\ee^Q$, precisely when\/
  $(\alpha A + \beta A^2)^{}_{ij} \geqslant 0$ holds for all\/
  $i\ne j$, and\/ $Q$ is also monotone when the criterion from
  Example~\textnormal{\ref{ex:gen-con}} is satisfied by\/
  $\alpha A + \beta A^2$.  \qed
\end{theorem}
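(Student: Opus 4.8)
Most of the first assertion is already at hand from the discussion preceding the statement: the spectral radius of $A$ equals $\lvert\mu^{}_{-}\rvert<1$, so the series in \eqref{eq:QausA} converges to a matrix $Q\in\drei$ with $M=\ee^Q$, and since $A$ is cyclic with $\deg(q)=3$, Eq.~\eqref{eq:dim-alg} places $Q$ in $\alg(A)=\langle A,A^2\rangle^{}_{\RR}$, so $Q=\alpha A+\beta A^2$ for suitable $\alpha,\beta\in\RR$. The plan is therefore to recall these facts and then to establish \emph{uniqueness} of $Q$ inside $\drei$, after which the identification of $(\alpha,\beta)$ with \eqref{eq:albe} follows automatically.

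For the uniqueness, I would take any $Q{\ts}'\in\drei$ with $\ee^{Q{\ts}'}=M$ and first note that $Q{\ts}'$ commutes with $M$, hence with $A=M\nts-\one$. As $M$, equivalently $A$, has simple spectrum in dimension $3$, the matrix $A$ is cyclic, so its centraliser in $\Mat(3,\RR)$ is $\RR[A]$ by the Cayley--Hamilton theorem. Since $\RR[A]=\RR\ts\one\oplus\alg(A)$ with $\alg(A)\subseteq\drei$ while $\one\notin\drei$, the constraint $Q{\ts}'\in\drei$ forces $Q{\ts}'=\alpha{\ts}'A+\beta{\ts}'A^2$ for uniquely determined $\alpha{\ts}',\beta{\ts}'\in\RR$.

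Next I would bring in the SMT. Evaluating along an eigenvector $v^{}_{\pm}$ of $A$ for the eigenvalue $\mu^{}_{\pm}$, one has $(1+\mu^{}_{\pm})\ts v^{}_{\pm}=M v^{}_{\pm}=\exp\bigl(\alpha{\ts}'\mu^{}_{\pm}+\beta{\ts}'\mu^{2}_{\pm}\bigr)v^{}_{\pm}$, and because the exponent is a real number and $1+\mu^{}_{\pm}>0$, this is precisely the real-logarithm equation $\alpha{\ts}'\mu^{}_{\pm}+\beta{\ts}'\mu^{2}_{\pm}=\log(1+\mu^{}_{\pm})$. This is the inhomogeneous $2\times2$ system already solved just above the statement, whose coefficient matrix has determinant $\mu^{}_{+}\mu^{}_{-}(\mu^{}_{-}\nts-\mu^{}_{+})\ne0$ by simplicity of the spectrum and $\mu^{}_{\pm}\ne0$; Cramer's rule then yields $(\alpha{\ts}',\beta{\ts}')=(\alpha,\beta)$ with $\alpha,\beta$ as in \eqref{eq:albe}. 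Hence $Q{\ts}'=Q$, and in particular $Q$ itself solves the same system, so $Q=\alpha A+\beta A^2$.

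The remaining claims are definitional. A Markov generator is a real matrix with vanishing row sums and non-negative off-diagonal entries, and $Q=\alpha A+\beta A^2\in\drei$ already has vanishing row sums; hence $Q$ is a generator, and then $M=\ee^Q$ is a genuine embedding, precisely when $(\alpha A+\beta A^2)^{}_{ij}\geqslant0$ for all $i\ne j$. By Proposition~\ref{prop:mon-gen} and the explicit three-dimensional monotonicity criterion of Example~\ref{ex:gen-con}, namely $q^{}_{23}\geqslant q^{}_{13}$ and $q^{}_{21}\geqslant q^{}_{31}$, such a generator is moreover monotone exactly when these two inequalities hold for the entries of $\alpha A+\beta A^2$. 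The argument carries no real obstacle; the two points to watch are the step from $Q{\ts}'\in\RR[A]$ to $Q{\ts}'\in\alg(A)$, which rests on $\one\notin\drei$, and the fact that the eigenvalue equations are genuinely the real-logarithm equations, with no spurious $2\pi\ts\mathrm{i}$ ambiguity, since the eigenvalues of $Q{\ts}'$ are polynomial images of the real numbers $0,\mu^{}_{+},\mu^{}_{-}$ and hence automatically real.
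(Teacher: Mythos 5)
Your proposal is correct and follows essentially the same route as the paper: cyclicity of $A$ plus the zero-row-sum constraint force any solution $Q{\ts}'\in\drei$ into $\alg(A)=\langle A,A^2\rangle^{}_{\RR}$, and the spectral mapping theorem then reduces uniqueness to the invertible $2\times2$ Vandermonde-type system whose solution is \eqref{eq:albe}. The two points you flag explicitly (that $\one\notin\drei$ kills the constant term, and that the eigenvalue equations carry no $2\pi\ts\mathrm{i}$ ambiguity because the exponents are real) are exactly the implicit steps in the paper's argument, so nothing is missing.
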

   
It remains to consider the case that $A$ is \emph{cyclic}, but not
simple.  Then, its eigenvalues are $0$ and $-1 < \mu < 0$, the latter
twice.  Also here, any solution of $M=\ee^Q$ with $Q\in\drei$ must be
of the form $Q=\alpha A + \beta A^2$, which implies the condition
$\alpha \mu + \beta \mu^2 = \log (1+\mu)$ by the SMT.  Using the
standard Jordan normal form of $A$, which must comprise the Jordan
block
$\bigl( \begin{smallmatrix} \mu & 1 \\ 0 &
    \mu \end{smallmatrix}\bigr)$ due to our assumption, one obtains
another condition from $\ee^Q = \one + A$, this time from the
superdiagonal element of the Jordan block, namely
$(1+\mu) (\alpha + 2 \beta \mu) = 1$. This results in the unique
solution
\begin{equation}\label{eq:albe-2}
  \alpha \, = \, 2 \, \myfrac{\log (1+\mu)}{\mu} - \myfrac{1}{1+\mu}
  \quad \text{and} \quad \beta \, = \, \myfrac{1}{\mu (1+\mu)} -
  \myfrac{\log (1+\mu)}{\mu^2} \ts .
\end{equation}
Note that \eqref{eq:albe-2} also follows from \eqref{eq:albe} by an
approximation argument of de L'Hospital type, via setting
$\mu^{}_{-} \nts = \mu = \mu^{}_{+} \nts + x$ and letting $x\to 0$.
So, the $Q$ from \eqref{eq:QausA} is once more the only solution for
$M=\ee^Q$ with $Q\in\drei$. Here, $\alpha > 0$ for $\mu$ sufficiently
large (approx. above $-0.7$) and $\beta < 0$, which is more
complicated than in the previous case. Nevertheless, we have the
following result.

\begin{coro}\label{coro:drei}
  Let\/ $M\nts\in\cM_{3,\preccurlyeq}$ be cyclic, but not simple.
  Then, $A = M \nts - \one$ has spectrum\/ $\sigma (A) = \{ 0, \mu \}$
  with\/ $-1 < \mu < 0$, where\/ $\mu$ has algebraic multiplicity\/
  $2$, but geometric multiplicity\/ $1$.  Further, all statements of
  Theorem~\textnormal{\ref{thm:drei}} remain true, this time with the
  coefficients\/ $\alpha, \beta$ from~\eqref{eq:albe-2}. \qed
\end{coro}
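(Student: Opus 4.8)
The plan is to treat this corollary as a repackaging of the discussion immediately preceding it, obtained by rerunning the proof of Theorem~\ref{thm:drei} with a non-trivial Jordan block in place of a diagonal matrix. Throughout, I stay in the running setting of that analysis, where $M$ has positive spectrum, equivalently $\det(M)>0$ by Proposition~\ref{prop:mon-3} and Corollary~\ref{coro:mon-3}; so all three eigenvalues of $M$ are real and lie in $(0,1]$, one of them being $1$.

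First I would settle the spectral picture. Cyclicity in dimension~$3$ means that the minimal polynomial of $M$, hence that of $A=M-\one$, has degree~$3$, so every eigenvalue of $M$ has geometric multiplicity~$1$; not being simple then forces a repeated eigenvalue. A stochastic matrix is power-bounded, so $M$ admits no Jordan block of size $\geqslant 2$ at the eigenvalue~$1$, and cyclicity therefore makes $1$ a \emph{simple} eigenvalue of $M$. Hence the repeated eigenvalue is some $\lambda\neq 1$, which is real and, by positivity of the spectrum together with $\lambda\neq 1$, satisfies $\lambda\in(0,1)$. Setting $\mu\defeq\lambda-1\in(-1,0)$, we get $\sigma(A)=\{0,\mu\}$, where $0$ inherits simplicity from the eigenvalue~$1$ of $M$, while $\mu$ has algebraic multiplicity~$2$ and, $A$ being cyclic, geometric multiplicity~$1$; in particular, the Jordan normal form of $A$ comprises the block $\left(\begin{smallmatrix}\mu & 1\\ 0 & \mu\end{smallmatrix}\right)$.

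Next I would establish uniqueness of the generator and identify it. Any $Q\in\drei$ with $\ee^Q=M$ commutes with $M$, hence with $A$; as $A$ is cyclic, Frobenius' theorem gives $Q\in\RR[A]\cap\drei=\alg(A)=\langle A,A^2\rangle^{}_{\RR}$, which is two-dimensional by \eqref{eq:dim-alg}. So $Q=\alpha A+\beta A^2$, and two independent scalar conditions fix $(\alpha,\beta)$: the SMT on the eigenvalue~$\mu$ gives $\alpha\mu+\beta\mu^2=\log(1+\mu)$, and matching $\ee^Q=\one+A$ on the superdiagonal entry of the Jordan block gives $(1+\mu)(\alpha+2\beta\mu)=1$. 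The coefficient determinant of this linear system equals $\mu^2(1+\mu)$, which is non-zero since $\mu\notin\{0,-1\}$, so the solution is unique and a short computation returns the $\alpha,\beta$ of \eqref{eq:albe-2} (these also follow from \eqref{eq:albe} by a de l'Hospital limit $\mu^{}_{-}\to\mu^{}_{+}$). Since the matrix $Q$ from \eqref{eq:QausA} lies in $\alg(A)\subseteq\drei$ and satisfies $\ee^Q=M$, it must be exactly this one.

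The remaining assertions of Theorem~\ref{thm:drei} then transfer verbatim: a matrix in $\drei$ has vanishing row sums by definition, so $Q=\alpha A+\beta A^2$ is a Markov generator precisely when its off-diagonal entries are non-negative, and such a generator is moreover monotone precisely when the two inequalities of Example~\ref{ex:gen-con} hold for it; none of this last part used simplicity of $A$. I expect the only mildly delicate point to be the spectral normalisation $-1<\mu<0$ --- in particular that $1$ is a \emph{simple} eigenvalue of $M$, and that the double eigenvalue is neither $0$ nor negative --- which is exactly where power-boundedness, Proposition~\ref{prop:mon-3} and Corollary~\ref{coro:mon-3} enter; the rest is the same bookkeeping as in Theorem~\ref{thm:drei}, now carried out with a Jordan block.
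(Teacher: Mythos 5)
Your proof is correct and follows essentially the same route as the paper: the same two conditions (the SMT relation at $\mu$ and the superdiagonal entry of the Jordan block of $A$) give the $2\times 2$ linear system with non-zero determinant $\mu^2(1+\mu)$ whose unique solution is \eqref{eq:albe-2}, and uniqueness of $Q\in\drei$ comes from cyclicity via Frobenius' theorem, exactly as in the discussion preceding the corollary. You also correctly read the standing hypothesis $\det(M)>0$ of that discussion into the statement (without it, $E^{}_{1,1,2}$ would violate $-1<\mu<0$), and your power-boundedness argument that $1$ is a simple eigenvalue fills in a detail the paper leaves tacit.
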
  

Not all $M\in\cM_{3,\preccurlyeq}$ with positive determinant 
(and hence spectrum) can
be embeddable, as there are cases with a single $0$ in one position;
see \cite{Davies,Guerry,BS} and references therein for further
examples. For $d\geqslant 4$, the possibility of complex conjugate
pairs of eigenvalues increases the complexity of the treatment, which
is nevertheless possible with the recent results from \cite{CFR}.

\section{Uniqueness of embedding and further
    directions}\label{sec:unique}

  The explicit treatment of $\cM_{3,\preccurlyeq}$ in the previous
  section shows that some useful sufficient criteria for unique
  embeddability should be in store, such as the one stated in
  \cite[Sec.~2.3]{Higham} for Markov matrices with distinct positive
  eigenvalues. Let us first recall a classic result on the existence
  of a real logarithm {of a given matrix,} which can be found in 
  {several} places, for instance
  in \cite[Sec.~8.8.2]{Gant} or as \cite[Thm.~1]{Culver}.
   
\begin{fact}\label{fact:one-sol}
  For\/ $B\in \mathrm{GL} (d,\RR )$, the equation\/ $B = \ee^R$ has a
  solution\/ $R\in \Mat (d,\RR)$ if and only if every elementary
  Jordan block of\/ $B$ with an eigenvalue on the negative real axis
  occurs with even multiplicity. When\/ $B$ is diagonalisable, this
  simplifies to the condition that each eigenvalue of\/ $B$ on the
  negative real axis has even algebraic multiplicity. \qed
\end{fact}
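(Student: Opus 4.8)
The plan is to reduce everything to the real Jordan (canonical) form of $B$ and to analyse the three types of blocks separately. Since $B$ is real, its complex elementary Jordan blocks split into: blocks $J_k(\lambda)$ with $\lambda > 0$; conjugate pairs $J_k(\nu), J_k(\bar\nu)$ with $\nu \notin \RR$, which give the familiar real $2k \times 2k$ blocks; and blocks $J_k(-\lambda)$ with $\lambda > 0$, the delicate case. Because $S^{-1}(\ee^R)S = \ee^{S^{-1}RS}$, and because a logarithm of a block-diagonal matrix is assembled from logarithms of its diagonal blocks, it suffices to settle, for each real block appearing in the real canonical form of $B$, whether it admits a real logarithm. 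The diagonalisable statement is then just the special case where every block has size $1$.

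For sufficiency I would treat the three types in turn. A block $J_k(\lambda)$ with $\lambda > 0$ is $\lambda(\one + N/\lambda)$ with $N$ the nilpotent superdiagonal part, so $R \defeq (\log \lambda)\one + \log(\one + N/\lambda)$ --- a finite sum, a polynomial in $N$, hence real --- satisfies $\ee^R = J_k(\lambda)$. For a conjugate pair, equivalently the associated real block, I would use that this real block is the realification of the complex block $J_k(\nu)$ acting on $\CC^k \cong \RR^{2k}$; as $\nu$ lies off the closed negative real axis, the principal branch of $\log$ is holomorphic near $\nu$, so $\log J_k(\nu)$ is a well-defined complex matrix and its realification is a real logarithm of the real block. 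The only remaining case is a \emph{pair of identical blocks} $J_k(-\lambda) \oplus J_k(-\lambda)$, which is exactly what the even-multiplicity hypothesis produces. Here I would pick any branch of $\log$ at $-\lambda$, say with value $\log\lambda + \mathrm{i}\pi$, to get a complex $L$ with $\ee^L = J_k(-\lambda)$; writing $L = P + \mathrm{i}Q$ with $P,Q$ real, the realification $R = \bigl(\begin{smallmatrix} P & -Q \\ Q & P \end{smallmatrix}\bigr)$ is real, and a direct check shows that, over $\CC$, $R$ is similar to $L \oplus \bar L$, so $\ee^R$ is similar over $\CC$ to $\ee^L \oplus \ee^{\bar L} = J_k(-\lambda) \oplus J_k(-\lambda)$. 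Since both $\ee^R$ and $J_k(-\lambda)^{\oplus 2}$ are \emph{real}, this $\CC$-similarity descends to a real one, which turns $R$ into a genuine real logarithm of the pair. Assembling the blocks proves the ``if'' direction.

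For necessity, suppose $B = \ee^R$ with $R \in \Mat(d,\RR)$. The key fact is that for a single block, $\ee^{J_m(\mu)}$ is similar to the single block $J_m(\ee^\mu)$: this uses only that $x \mapsto \ee^x$ has nowhere-vanishing derivative, so $\ee^{J_m(\mu)} - \ee^\mu \one$ has rank $m - 1$. Hence the multiset of Jordan blocks of $B$ equals $\{ J_m(\ee^\mu) : J_m(\mu) \text{ a Jordan block of } R \}$. Now fix a negative real eigenvalue $-\lambda$ of $B$: any eigenvalue $\mu$ of $R$ with $\ee^\mu = -\lambda$ has $\mathrm{Im}(\mu) \in \pi(2\ZZ + 1)$, so $\mu \notin \RR$ and $\bar\mu \ne \mu$. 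Since $R$ is real, its Jordan structure is conjugation-symmetric, so to each block $J_m(\mu)$ corresponds a block $J_m(\bar\mu)$, with $\ee^{\bar\mu} = -\lambda$ as well. Summing over all such $\mu$, which themselves come in conjugate pairs, the number of blocks $J_m(-\lambda)$ of $B$ is even for every $m$, which is the asserted condition; restricting all block sizes to $1$ gives the diagonalisable reformulation.

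The step where genuine care is needed is the pairing for negative eigenvalues: on the existence side, producing a \emph{real} logarithm of $J_k(-\lambda)^{\oplus 2}$ via the realification and the descent of $\CC$-similarity to $\RR$-similarity of real matrices; on the necessity side, the bookkeeping that transports the conjugation-symmetric Jordan structure of $R$ through $\exp$ to that of $B$, bearing in mind that distinct eigenvalues of $R$ may have the same exponential without the corresponding Jordan blocks of $B$ merging. The positive-real and non-real cases are routine by comparison.
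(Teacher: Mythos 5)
Your argument is correct and complete. Note that the paper does not actually prove this Fact: it is stated with a \qed and attributed to Gantmacher and to Culver, so there is no in-paper proof to compare against. What you have written is essentially the classical argument from those references: reduce to the real Jordan form; take a real logarithm of each positive-eigenvalue block via $(\log\lambda)\ts\one + \log(\one + N/\lambda)$; realify a principal logarithm for each complex-conjugate pair; pair up identical negative-eigenvalue blocks and realify a complex logarithm of one of them; and, for necessity, use that $\exp$ sends a single Jordan block $J_m(\mu)$ to a single block $J_m(\ee^{\mu})$ (because $\ee^{J_m(\mu)} - \ee^{\mu}\ts\one$ has rank $m-1$), combined with the conjugation symmetry of the Jordan structure of a real matrix and the fact that $\ee^{\mu} < 0$ forces $\mu \notin \RR$. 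The two points you flag as delicate are exactly the right ones, and you handle both correctly: the descent of $\CC$-similarity between real matrices to $\RR$-similarity is the standard fact needed to turn the realified logarithm of $J_k(-\lambda)^{\oplus 2}$ into a genuine real logarithm, and in the necessity direction the multiset bookkeeping correctly avoids the trap of thinking that blocks of $R$ with distinct eigenvalues but equal exponentials could merge in $B$.
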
  

Any matrix $R\in \Mat (d,\RR)$ that solves $B=\ee^R$ is called a
\emph{real logarithm} of $B$.  When considering a non-singular Markov
matrix $M$, we are only interested in real logarithms of $M$ with zero
row sums, that is, in elements from the subalgebra
$\cA^{(d)}_{\, 0}\!\subset\Mat (d,\RR)$. This is justified by the
following observation. Suppose $\ee^R$ has unit row sums with a real
matrix $R$ that fails to have zero row sums.  Then, the set of
matrices $\ee^{tR}$ with unit row sums and $t\in\RR$ forms a discrete
subgroup of $\{ \ee^{tR}:t\in\RR\}\simeq \RR$. This is so because the
existence of an accumulation point with unit row sums, $t^{}_0$ say,
would result in $(1, \ldots, 1)^T$ being an eigenvector of $R$ with
eigenvalue $0$, which is a contradiction.

In analogy to the previous case with $d=3$, now for
$A\in\cA^{(d)}_{\, 0}\!$ with arbitrary $d\geqslant 2$, we define the
non-unital algebra
\[
    \alg (A) \, \defeq \, \langle A^m \nts : m \in \NN \rangle^{}_{\RR}
    \, = \, \langle A, A^2, \ldots , A^{d-1}\rangle^{}_{\RR}
    \, \subset \, \cA^{(d)}_{\, 0} ,
\]
where the second formulation follows from the Cayley--Hamilton
theorem in conjunction with the fact that $\cA^{(d)}_{\, 0}$ is
non-unital.
  
\begin{lemma}\label{lem:real}
  Let\/ $M \in \cM_d$ be cyclic and non-singular, and assume that\/
  $M$ possesses at least one real logarithm, according to
  Fact~\textnormal{\ref{fact:one-sol}}. Then, with\/
  $A = M\nts - \one$, any real logarithm\/ $R$ of\/ $M$ satisfies\/
  $R \in \alg (A)$.
\end{lemma}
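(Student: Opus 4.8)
The plan is to use that any real logarithm $R$ commutes with $M=\ee^{R}$, combine this with cyclicity to force $R$ into the commutative algebra $\RR[A]$ with $A=M-\one$, and then peel off the scalar component by inspecting row sums.

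First I would record that $R$ commutes with $\ee^{R}=M$, since $R$ commutes with every partial sum of the exponential series; thus $R\in\Co(M)$. Because $M$ is cyclic, its centraliser is the commutative algebra $\Co(M)=\RR[M]=\RR[A]$, by Frobenius' theorem (see \cite[Fact~2.10]{BS}), and $\RR[A]=\langle\ts\one,A,A^{2},\ldots,A^{d-1}\rangle^{}_{\RR}$ since the minimal polynomial of $A$ has degree $d$. Hence I may write $R=a^{}_{0}\ts\one+R{\ts}'$, where $a^{}_{0}\in\RR$ and $R{\ts}'\in\alg(A)$ collects the terms involving positive powers of $A$.

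Then I would determine $a^{}_{0}$ from the Markov property. With $u=(1,\ldots,1)^{T}$, one has $Mu=u$, hence $Au=Mu-u=0$ and therefore $A^{m}u=0$ for every $m\geqslant 1$; consequently $Ru=a^{}_{0}\ts u$, that is, all row sums of $R$ equal $a^{}_{0}$. Applying the exponential to the eigenvector relation $Ru=a^{}_{0}\ts u$ gives $u=Mu=\ee^{R}u=\ee^{a^{}_{0}}u$, so $\ee^{a^{}_{0}}=1$ and thus $a^{}_{0}=0$. This yields $R=R{\ts}'\in\alg(A)$, as claimed; in passing it also shows that every real logarithm of a cyclic non-singular Markov matrix automatically has vanishing row sums, i.e.\ lies in $\cA^{(d)}_{\, 0}$, consistent with the discreteness remark preceding the lemma.

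The argument is short, so no serious obstacle is anticipated. The only structural input is Frobenius' description of the centraliser of a cyclic matrix, and the one point to state with care is that the splitting $R=a^{}_{0}\ts\one+R{\ts}'$ genuinely separates off the unit component; this holds because $\alg(A)\subset\cA^{(d)}_{\, 0}$ contains no nonzero multiple of $\one$, as any such multiple has nonvanishing row sums.
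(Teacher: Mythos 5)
Your proposal is correct and follows essentially the same route as the paper: commutation of $R$ with $M$, Frobenius' theorem for the cyclic matrix to get $R\in\RR[A]$, and then elimination of the $\one$-component via the row-sum (all-ones eigenvector) argument, which forces $\ee^{a_0}=1$ and hence $a_0=0$. The only cosmetic difference is that you read off $a_0=0$ from the eigenvector relation $Ru=a_0 u$ rather than from the row sums of $\ee^{\alpha_0}\ee^{X}$, which is the same computation.
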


\begin{proof}
  Clearly, we have $[R,M\ts ] = [R,A\ts ]=\nix$, so $M$ cyclic implies
  $R\in\RR [A\ts ]$ by Frobenius' theorem, and thus
  $R = \alpha^{}_{0}\ts \one + \sum_{n=1}^{d-1} \alpha^{}_{n} A^{n}$
  for some $\alpha^{}_{0}, \ldots , \alpha^{}_{d-1} \in \RR$. Hence,
  $R=\alpha^{}_{0}\ts \one + X$ with $X\in \cA^{(d)}_{\, 0}$, where
  $\ee^X$ then has unit row sums.  Consequently, all row sums of
  $\ee^R = \ee^{\alpha^{}_0} \ee^X$ equal $\ee^{\alpha^{}_0}$, which
  must be $1$. So, we get $\alpha^{}_{0} = 0$ and
  $R\in \alg (A) \subset \cA^{(d)}_{\, 0}$ as claimed.
\end{proof}

Now, we can extend the uniqueness result mentioned earlier to cyclic
matrices.  It is a variant of \cite[Thm.~2]{Culver}, but we give a
different and constructive proof that later leads to an effective (and
numerically stable) criterion for embeddability. It generalises what
we saw in Theorem~\ref{thm:drei} and Corollary~\ref{coro:drei}, and
also differs from the approach used in \cite{CFR}.

\begin{theorem}\label{thm:unique}
  Suppose\/ $M\in\cM_d$ is cyclic and has real spectrum. Then, $M$
  possesses a real logarithm\/ $R$, so\/ $M=\ee^R$, if and only if the
  spectrum of\/ $M$ is positive.  In this case, $R$ is unique, and is
  always an element of\/ $\alg (A) \subset \cA^{(d)}_{\, 0}$, where\/
  $A = M \nts - \one$.
\end{theorem}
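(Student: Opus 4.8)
The plan is to treat the two implications of the equivalence, then the membership in $\alg (A)$, and finally uniqueness, as four essentially independent pieces. For the necessity direction, I would suppose $M=\ee^R$ with $R\in\Mat (d,\RR)$. Then $M$ is non-singular, so $0\notin\sigma (M)$, and since $M$ is cyclic, each of its eigenvalues carries exactly one elementary Jordan block. If some $\lambda\in\sigma (M)$ were negative (recall $\sigma (M)\subset\RR$ by hypothesis), that block would occur with multiplicity one, which is odd, contradicting the criterion of Fact~\ref{fact:one-sol} for the existence of a real logarithm. Hence $\sigma (M)\subset\RR_{+}$.

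For sufficiency, assume $\sigma (M)\subset\RR_{+}$. I would first note that a cyclic stochastic matrix has $1$ as a simple eigenvalue --- a non-trivial Jordan block for the eigenvalue $1$ would force the entries of $M^n$ to grow without bound --- so all remaining eigenvalues of $M$ lie in $(0,1)$ and the spectral radius of $A=M-\one$ is strictly less than $1$. Consequently,
\[
   R \, \defeq \, \log (\one + A) \, = \sum_{m=1}^{\infty}
   \myfrac{(-1)^{m-1}}{m} A^m
\]
converges, lies in the finite-dimensional (hence closed) algebra $\alg (A)\subset\cA^{(d)}_{\, 0}$, and satisfies $\ee^R=\one+A=M$ by the usual spectral calculus; this gives a constructive proof of existence and the formula that is exploited later for the effective embeddability criterion. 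For the structural statement, Lemma~\ref{lem:real} guarantees that \emph{every} real logarithm of $M$ lies in $\alg (A)$.

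It remains to prove uniqueness, which I expect to be the substantive step. Let $R$ and $R{\ts}'$ be real logarithms of $M$; by the previous paragraph, $R=p(A)$ and $R{\ts}'=p{\ts}'(A)$ for real polynomials $p,p{\ts}'$ of degree $<d$, so $[R,R{\ts}']=\nix$ and $\ee^{R-R{\ts}'}=\ee^R(\ee^{R{\ts}'})^{-1}=\one$. Since $R-R{\ts}'=(p-p{\ts}')(A)$ and $\sigma (A)\subset\RR$, the SMT shows that all eigenvalues of $R-R{\ts}'$ are real; being also contained in $2\pi\mathrm{i}\ZZ$, they vanish, so $(p-p{\ts}')(\nu)=0$ for every $\nu\in\sigma (A)$. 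On each Jordan block $J=\nu\ts\one+N$ of $A$ this means that $(p-p{\ts}')(J)$ is a polynomial in $N$ with vanishing constant term, hence nilpotent, while its exponential is the corresponding block of $\ee^{R-R{\ts}'}=\one$, that is, the identity. A nilpotent matrix $Z$ with $\ee^Z=\one$ must be $\nix$, because $\ee^Z-\one=Z\ts(\one+\tfrac{1}{2}Z+\cdots)$ with the second factor invertible. Thus $(p-p{\ts}')(A)$ vanishes on each Jordan block, so $R=R{\ts}'$; in particular, the $R$ displayed above is that unique logarithm.

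The main obstacle is precisely this last step in the non-diagonalisable case: if $M$ were diagonalisable, uniqueness would follow at once from the spectrum via the SMT, but for a genuinely cyclic (non-simple) $M$ one has to control the nilpotent parts on the Jordan blocks, and it is the combination of the reduction to a single block with the observation that a nilpotent matrix with trivial exponential is zero that closes the gap. Everything else reduces quickly to Fact~\ref{fact:one-sol}, Lemma~\ref{lem:real}, and the elementary fact that a cyclic stochastic matrix has $1$ as a simple eigenvalue.
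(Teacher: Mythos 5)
Your proposal is correct, and the first three pieces (necessity via Fact~\ref{fact:one-sol} and the single-Jordan-block observation, existence of a logarithm with positive spectrum, and membership in $\alg (A)$ via Lemma~\ref{lem:real}) follow the same lines as the paper. The uniqueness step, however, takes a genuinely different route. The paper writes the sought logarithm as $R=\sum_{i=1}^{d-1}\alpha^{}_{i}A^{i}$ and uses the SMT (together with derivatives of the basic relation on non-trivial Jordan blocks) to derive an inhomogeneous linear system for the $\alpha^{}_{i}$ whose coefficient matrix is a (confluent) Vandermonde matrix; uniqueness then follows from the explicit non-vanishing of its determinant. You instead take two logarithms $R=p(A)$ and $R{\ts}'=p{\ts}'(A)$, note that they commute so that $\ee^{R-R{\ts}'}=\one$, use the SMT to kill the eigenvalues of $R-R{\ts}'$ (real and in $2\pi\mathrm{i}\ts\ZZ$, hence zero), and then observe that on each Jordan block the remaining part of $(p-p{\ts}')(A)$ is nilpotent with trivial exponential, hence $\nix$ by the factorisation $\ee^{Z}-\one = Z\ts(\one+\tfrac{1}{2}Z+\cdots)$ with unipotent second factor. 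This is cleaner and more conceptual for the uniqueness claim itself, and it bypasses the confluent Vandermonde determinant entirely. What the paper's computation buys in exchange is an \emph{effective} procedure: inverting the matrix $B$ expresses the coefficients $\alpha^{}_{i}$ purely in terms of the eigenvalues of $A$, which is precisely what is exploited afterwards (Theorem~\ref{thm:drei}, Corollary~\ref{coro:drei} and the subsequent remark) to turn the uniqueness statement into a numerically testable embeddability criterion. Your argument establishes the theorem as stated but would need the paper's linear-system analysis to recover that computational payoff.
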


\begin{proof}
  When $M$ is cyclic, no elementary Jordan block can occur twice, and
  the first implication follows from Fact~\ref{fact:one-sol}.  When
  $\sigma (M) \subset \RR_{+}$, due to Fact~\ref{fact:one-sol} and
  Lemma~\ref{lem:real}, all real logarithms of $M$ must lie in
  $\alg (A)$, and there is at least one $R\in\alg (A)$ with $\ee^R=M$,
  so we have $R = \sum_{i=1}^{d-1} \alpha^{}_{i} A^i$ for some
  $\alpha^{}_{1}, \ldots, \alpha^{}_{d-1} \in \RR$. It remains to
  establish uniqueness.

  First, assume that $A$ is \emph{simple}. As $A$ is a generator, this
  means $\sigma (A) = \{ 0, \mu^{}_{1}, \ldots , \mu^{}_{d-1}\}$, with
  distinct $\mu^{}_{i} \in (-1,0)$ due to our assumptions. Then, since
  all $\alpha^{}_{i}$ and $\mu^{}_{j}$ are real, the SMT implies the
  $d- \nts 1$ identities
\begin{equation}\label{eq:alphas}
    \sum_{\ell=1}^{d-1} \alpha^{}_{\ell} \, \mu^{\ell}_{i} \, = \,
    \log (1 + \mu^{}_{i}) \ts , \qquad 
    1  \leqslant i \leqslant d- \nts 1 \ts .
\end{equation}
They constitute an inhomogeneous system of linear equations for the
$\alpha^{}_{i}$ with the matrix
\begin{equation}\label{eq:B-mat}
   B \, = \,  \begin{pmatrix} \mu^{}_{1} & \mu^{2}_{1} & \cdots & 
     \mu^{d^{\vphantom{I}}-1}_{1} \\
     \mu^{}_{2} & \mu^{2}_{2} & \cdots & 
     \mu^{d^{\vphantom{I}}-1}_{2} \\
     \vdots & \vdots & & \vdots \\ \mu^{}_{d-1} & \mu^{2}_{d-1} &
     \cdots & \mu^{d^{\vphantom{I}}-1}_{d-1} \end{pmatrix} .
\end{equation}
Since
$\det (B) = \left( \prod_i \mu^{}_{i} \right) \prod_{k>\ell}
(\mu^{}_{k} - \mu^{}_{\ell})$ by an obvious variant of the standard
Vandermonde determinant formula, $B$ is invertible when $A$ is simple,
and \eqref{eq:alphas} has a unique real solution.
    
When $A$ is \emph{cyclic}, but not simple, the appearance of
non-trivial Jordan blocks necessitates a more refined argument.
Clearly, as $A$ is a generator and also cyclic, $0$ is a simple
eigenvalue of $A$ by \cite[Prop.~2.3{\ts}(2)]{BS}.  Let
$\mu \in (-1,0)$ be any of the other eigenvalues, say with algebraic
multiplicity $m$. When $m=1$, we get one condition from the SMT, and
nothing else is needed. So, assume $m\geqslant 2$. As $A$ is cyclic,
the geometric multiplicity of $\mu$ is $1$, and the corresponding
Jordan block in standard form is
$\JJ^{}_{\mu} = \mu \ts \one_m + N_m$, where $N_m$ is the nilpotent
matrix with entries $1$ on the first superdiagonal and $0$ everywhere
else. It satisfies $N^{m}_{m} = \nix$, while $N^{k}_{m}$, for
$1\leqslant k < m$, has entries $1$ on the $k$-th superdiagonal and
$0$ elsewhere.  In this case, we get only \emph{one} condition from
the SMT, namely
\begin{equation}\label{eq:lam-sum}
     \sum_{\ell =1}^{d-1} \alpha^{}_{\ell} \, \mu^{\ell} 
        \, = \, \log (1 + \mu) \ts ,
\end{equation}
as in \eqref{eq:alphas}, while we need $m-1$ independent further
ones. They will come from derivatives of \eqref{eq:lam-sum}, which
needs a justification as follows.
  
First, from $\ee^R = \one + A$, one concludes that we must have
\begin{equation}\label{eq:comp}
     \exp \biggl( \,\sum_{\ell=1}^{d-1} \alpha^{}_{\ell} \,
     \JJ^{\ts \ell}_{\mu}\biggr) 
     \, = \: \one^{}_{m} + \JJ^{}_{\mu}
     \: = \: (1+\mu)\ts\ts \one^{}_{m} + N^{}_{m} \ts .
\end{equation}
Now, defining the polynomial
$\phi (x) = \sum_{\ell=1}^{d-1} \alpha^{}_{\ell} \, x^{\ell}$ and the
function $\psi (x) = \ee^{\ts\phi (x)}\!$, one can employ the standard
method from \cite[Sec.~1.2.1]{Higham} to calculate the exponential in
\eqref{eq:comp} as
\[
  \psi \bigl( \JJ^{}_{\mu}\bigr) \, = \: \psi (\mu) \, \one^{}_{m} \, +
  \sum_{k=1}^{m-1} \myfrac{\psi^{(k)} (\mu)}{k\ts !} \ts N^{k}_{m}
  \ts ,
\]
where $\psi^{(k)}$ denotes the $k$-th derivative of $\psi$. As
$\psi (\mu) = 1+\mu $ by \eqref{eq:lam-sum}, a comparison with
\eqref{eq:comp} leads to the conditions
$\psi^{(k)} (\mu) = \delta^{}_{k,1} $ for
$1 \leqslant k \leqslant m-1$, noting that $1+\mu\ne 0$.  Iterating
the product rule on $\psi = \ee^{\ts \phi}$ and inserting
\eqref{eq:lam-sum} then results in
\begin{equation}\label{eq:derive}
   \myfrac{\dd^k}{\dd \mu^k}
   \, \sum_{\ell=k}^{d-1} \alpha^{}_{\ell} \, \mu^{\ell}
   \, = \, \phi^{(k)} (\mu) \, = \,
   \myfrac{(-1)^{k-1} (k-1) !}{(1+\mu)^k} \, = \,
   \myfrac{\dd^k}{\dd \mu^k} \ts \log (1+\mu) \ts ,
\end{equation}
which shows that the additional conditions on the $\alpha^{}_{\ell}$
emerge via derivatives of the fundamental relation \eqref{eq:lam-sum}.
It remains to check when we obtain a \emph{unique} solution
$\alpha^{}_{1}, \ldots , \alpha^{}_{d-1}$ this way.
 
Here, the matrix $B$ that generalises \eqref{eq:B-mat} is specified by
an $n$-tuple
$\bigl( (\mu^{}_{1}, m^{}_{1}), \ldots , (\mu^{}_{n}, m^{}_{n})
\bigr)$ of distinct non-zero eigenvalues $\mu^{}_i$ of $A$ with their
algebraic multiplicities $m^{}_i$, where $n\leqslant d- \nts 1$ and
$m^{}_{1}\nts + \ldots + m^{}_{n} = d-\nts 1$. A pair
$(\mu^{}_{i}, m^{}_{i})$ is responsible for $m^{}_{i}$ rows of $B$ of
length $d-\nts 1$, where the first derives from \eqref{eq:lam-sum},
followed by $m^{}_{i} - \nts 1$ rows induced by
\eqref{eq:derive}. Here, each new row emerges from the previous one by
differentiation with respect to $\mu^{}_{i}$. The resulting matrix is
a variant of the \emph{confluent Vandermonde matrix}, see
\cite[Sec.~22.2]{Higham-2} or \cite{Luther}, which is known from
Hermite interpolation. It is invertible if and only if the
$\mu^{}_{i}$ are distinct.

To substantiate the latter claim, define the sequence
$(\gamma^{}_{n})^{}_{n\in\NN}$ by
\[
    \gamma^{}_{n} \, = \, \det \begin{pmatrix}
     1 & 1 & 1 & \cdots & 1 \\
     1 & 2 & 4 & \cdots & 2^{n-1} \\
     \vdots & \vdots & \vdots & & \vdots \\
     1 & n & n^2 & \ldots & n^{n-1} \end{pmatrix} ,
\]
which starts as $1, 1, 2, 12, 288, 34{\ts\ts}560$; compare
\cite[A{\ts}000{\ts\ts}178]{OEIS}. Then, one finds
\[
    \det (B) \, = \, \biggl( \,\prod_{i=1}^{n} \mu^{m_i}_{i} 
    \ts \gamma^{}_{m_i} \biggr) \prod_{k>\ell}
    \bigl( \mu^{}_{k} - \mu^{}_{\ell} \bigr)^{m^{}_k m^{}_{\ell}},
\]
compare \cite[Exc.~22.6]{Higham-2}, which reduces to the determinant
formula stated previously for the special case
$m^{}_{1} = \ldots = m^{}_{n} = 1$. Since the $\mu^{}_{i}$ are
distinct, $\det (B)\ne 0$ is clear, and the claimed uniqueness
follows.
\end{proof}

The benefit of this approach is that one can calculate $B^{-1}$ and
thus determine the coefficients purely from the eigenvalues of $A$. In
particular, the unique $R$ is a generator if and only if
$\sum_{i=1}^{d-1}\alpha^{}_{i} A^{i}$ satisfies the corresponding
conditions.

\begin{remark}
  The Vandermonde matrix and its inverse is well known from Lagrange
  interpolation theory. Therefore, in minor modification of the
  results from \cite[Sec.~0.9.11]{HJ}, one finds the matrix elements
  of the inverse of $B$ from \eqref{eq:B-mat} as
\[
  \bigl( B^{-1}\bigr)_{ij} \, = \, \frac{(-1)^{i-1} S^{(d-2)}_{d-1-i}
    \bigl( \{ \mu^{}_{1} , \ldots , \mu^{}_{d-1} \} \setminus \{
    \mu^{}_{j} \}\bigr)}{\mu^{}_{j} \prod_{k\ne j} (\mu^{}_{k} -
    \mu^{}_{j}) }
\]
where $S^{(n)}_{m} \bigl( \{ a^{}_{1}, \ldots , a^{}_{n} \} \bigr)$ is
the elementary symmetric polynomial as defined by
$S^{(n)}_{0} \equiv 1$,
$S^{(n)}_{1} \bigl( \{ a^{}_{1}, \ldots , a^{}_{n} \} \bigr) =
a^{}_{1} \nts + \ldots + a^{}_{n}$,
$S^{(n)}_{2} \bigl( \{ a^{}_{1}, \ldots , a^{}_{n} \} \bigr) =
\sum_{i<j} a^{}_{i} a^{}_{j}$ and so on, up to the final one, which is
$S^{(n)}_{n} \bigl( \{ a^{}_{1}, \ldots , a^{}_{n} \} \bigr) =
\prod_{i} a^{}_{i}$. 

With a little more effort, this formula can be extended to the cyclic 
situation as well; see \cite{Luther} for a constructive approach to 
$B^{-1}$ in this case.  \exend
\end{remark}

The above result leads to the following sufficient criterion for
unique embeddability.

\begin{coro}\label{coro:unique}
  Let\/ $M\in\cM_d$ be cyclic and have real spectrum,
  $\sigma (M) \subset \RR$.  Then, $M$ has a real logarithm if and
  only if\/ $\sigma (M) \subset \RR_{+}$.

  In this case, the spectral radius of\/ $A=M\nts - \one$ is\/
  $\varrho^{}_{A} < 1$, and there is at most one Markov generator\/
  $Q$ such that\/ $M = \ee^Q$.  The only choice is\/
  $ Q = \log (\one + A) \in \alg (A) \subset \cA^{(d)}_{\, 0} \!$,
  calculated with the standard branch of the matrix logarithm as a
  convergent series. In particular, $M$ is embeddable if and only if
  the matrix\/ $\log (\one + A)$ is a generator.
\end{coro}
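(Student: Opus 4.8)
The plan is to reduce everything to Theorem~\ref{thm:unique}, which already settles the cyclic, real-spectrum case. The stated equivalence — that $M$ admits a real logarithm if and only if $\sigma(M)\subset\RR_{+}$ — is word for word the first assertion of that theorem, so nothing has to be added there. For the remaining claims I would first locate the spectrum precisely: since $M$ is Markov, $1\in\sigma(M)$ and every eigenvalue of $M$ has modulus at most $1$, so $\sigma(M)\subset\RR_{+}$ in fact forces $\sigma(M)\subset(0,1]$. Translating by $-\one$, with $A=M-\one$, gives $\sigma(A)\subset(-1,0]$, hence $\varrho^{}_{A}=1-\min\sigma(M)<1$; this small observation is the only genuinely new ingredient.

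Next I would identify the unique real logarithm explicitly. Because $\varrho^{}_{A}<1$ lies strictly inside the radius of convergence of $\log(1+z)$, the series $\log(\one+A)=\sum_{m\geqslant 1}\frac{(-1)^{m-1}}{m}A^{m}$ converges, defines a real matrix, and lies in $\alg(A)$ as every summand is a positive power of $A$; by the inclusion $\alg(A)\subset\cA^{(d)}_{\, 0}$ recalled above it has vanishing row sums, and $\ee^{\log(\one+A)}=\one+A=M$ by the standard fact (holomorphic functional calculus, equivalently composition of the two power series) that $\exp$ inverts $\log$ on this region — precisely the computation already used around \eqref{eq:QausA}. Hence $Q\defeq\log(\one+A)$ is a real logarithm of $M$ lying in $\alg(A)$, and by the uniqueness part of Theorem~\ref{thm:unique} it is the \emph{only} real logarithm of $M$.

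Finally I would draw the conclusions about generators. A Markov generator is a real matrix with vanishing row sums, so any generator $Q'$ with $\ee^{Q'}=M$ is a real logarithm of $M$ and therefore equals $Q$ by the uniqueness just established. Thus at most one Markov generator can have exponential $M$, and whenever one exists it must be $Q=\log(\one+A)$; consequently $M$ is embeddable precisely when this particular $Q$ satisfies the generator conditions, where the zero-row-sum property is automatic and the only thing left to verify is non-negativity of the off-diagonal entries. I anticipate no real obstacle here — the substantive work is already contained in Theorem~\ref{thm:unique} and Lemma~\ref{lem:real} — and the single point deserving a careful sentence is the step ``$Q'$ is a generator $\Rightarrow$ $Q'\in\cA^{(d)}_{\, 0}$ is a real logarithm of $M$,'' which is exactly what makes the uniqueness of Theorem~\ref{thm:unique} applicable to generators.
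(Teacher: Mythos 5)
Your proposal is correct and follows essentially the same route as the paper: reduce the first equivalence to Theorem~\ref{thm:unique}, note that $\sigma(M)\subset(0,1]$ gives $\varrho^{}_{A}<1$, identify the convergent series $\log(\one+A)\in\alg(A)$ as the unique real logarithm, and observe that any Markov generator with exponential $M$ is in particular a real logarithm and so must coincide with it. The only cosmetic difference is that the paper justifies $\log(\one+A)\in\alg(A)$ by the closedness of $\alg(A)$ in $\Mat(d,\RR)$ (the partial sums lie in $\alg(A)$, as you note, and the limit stays there because the subspace is finite-dimensional), a half-sentence you would want to add for completeness.
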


\begin{proof}
  The first claim follows from Theorem~\ref{thm:unique}. If
  $\sigma (M) \subset \RR_{+}$, all eigenvalues of $A$ lie in the
  half-open interval $(-1, 0\ts ]$, so $\varrho^{}_{A}<1$ is clear. By
  Theorem~\ref{thm:unique}, as $M$ is cyclic with
  $\sigma (M) \subset (0,1]$, there is precisely one real matrix $R$
  with $M=\ee^R$. Due to $\varrho^{}_{A} < 1$, the series
\[
   \log (\one + A) \, =  \sum_{m=1}^{\infty} \myfrac{(-1)^{m-1}}{m} A^m
\]  
converges. The limit is then an element of $\alg (A)$, because this
algebra is a closed subset of $\Mat (d,\RR)$. So, we get
$R = \log (\one + A) \in \alg (A)$ in this case, which has zero row 
sums, but need not be a generator.
\end{proof}

One can go beyond this result, but care is required with the branches 
of the complex logarithm; see \cite[Ch.~11]{Higham} for background
and \cite{CFR} for recent progress in this direction. 

\begin{remark}
  The results from Theorem~\ref{thm:drei} and
  Corollary~\ref{coro:drei} apply to all cyclic matrices $M\in\cM_3$
  with positive spectrum in that the embeddability of $M$ can most
  easily be verified via testing whether $\alpha A + \beta A^2$ is a
  generator, where $A=M\nts - \one$ and $\alpha,\beta\in\RR$ are the
  numbers from Eq.~\eqref{eq:albe}, if $M$ is simple, or from
  Eq.~\eqref{eq:albe-2} otherwise.  \exend
\end{remark}

Clearly, uniqueness results have interesting consequences on the
structure of Markov roots, as can be seen in the following refinement
of \cite[Ex.~3.9]{BS}.

\begin{example}
  The two-dimensional Markov matrix 
\[
  M \, = \, \begin{pmatrix} \tfrac{3}{4} & \tfrac{1}{4} \\
  \tfrac{1}{2} & \tfrac{1}{2} \end{pmatrix} 
\]
is uniquely embeddable by Lemma~\ref{lem:Kendall}, so $M\nts = \ee^Q$
with a unique generator $Q$. Nevertheless, as follows from a simple
calculation, it has precisely two Markov square roots, namely
\[
   M^{}_{1} \, = \, \begin{pmatrix} \tfrac{5}{6} & \tfrac{1}{6} \\
   \tfrac{1}{3} & \tfrac{2}{3} \end{pmatrix} \quad \text{and} \quad 
   M^{}_{2} \, = \, \begin{pmatrix} \tfrac{1}{2} & \tfrac{1}{2} \\
     1 & 0 \end{pmatrix} .
\]
Of these, $M^{}_{1} = \exp \bigl( \frac{1}{2} \ts Q \bigr)$ is
embeddable, while $M^{}_{2}$ is not. So, in the embeddable case, there
is always at least one Markov $n$-th root for every $n\in\NN$ of the
form $\exp \bigl( \frac{1}{n} \ts Q \bigr)$, but there can still be
others. A uniqueness result for the embedding of $M$ then means that,
among all Markov roots, there is precisely one sequence of embeddable
Markov $n$-th roots of $M$.  \exend
\end{example}

The set $\emb$ of embeddable matrices is a relatively closed subset of
$\{ M\in\cM_d : \det (M) >0\}$ by \cite[Prop.~3]{King}, but (for
$d\geqslant 2$) it is \emph{not} a closed subset of $\cM_d$. The
closure of $\emb$ is still a subset of the infinitely divisible
elements of $\cM_d$, and it is a natural question which matrices lie
on the boundary, which we denote by $\partial\emb$. Clearly, there can
be embeddable cases, such as
$\left(\begin{smallmatrix} 1 & 0 \\ 1-\alpha & \alpha
  \end{smallmatrix}\right)$ or $\left(\begin{smallmatrix} \alpha &
    1-\alpha \\ 0 & 1 \end{smallmatrix}\right)$ for $d=2$ and
$0 < \alpha \leqslant 1$, as well as (singular) idempotent ones, such
as $M_{\infty} = \lim_{t\to\infty}\ee^{t\ts Q}$ for any generator
$Q\ne \nix$. For $d>2$, there are further possibilities. Any
$M\in \partial\emb$ satisfies $M=\lim_{n\to\infty}\ee^{Q_n}$ for some
sequence $(Q_n)^{}_{n\in\NN}$ of generators, which implies
$\lim_{n\to\infty} \ee^{\tr (Q_n)} = \det (M) \geqslant 0$.

When $M\in \partial\emb$ has $\det(M) > 0$, it is embeddable by 
Kingman's infinite
divisibility criterion. Alternatively, a positive determinant implies
that the sequence $\bigl(\tr(Q_n)\bigr)_{n\in\NN}$ converges and is
thus bounded. Since all diagonal elements of a generator are
non-positive, they are bounded as well, hence also all elements of the
$Q_n$ due to the vanishing row sums.  By a standard compactness
argument, there is thus a subsequence $(Q_{n_i})^{}_{i\in\NN}$ such
that $\lim_{i\to\infty}Q_{n_i} = Q$ is a generator with $M=\ee^Q$ as
expected, and $M$ lies in the Markov semigroup $\{ \ee^{t\ts Q}:
t\geqslant 0\}$.

When $\det (M)=0$, the sequence $\bigl(\tr(Q_n)\bigr)_{n\in\NN}$ must
be (negatively) unbounded, and we may assume that, at least at one
off-diagonal position, the entries of the $Q_n$ are (positively)
unbounded. When $d=2$, this suffices to show that $M$ is a singular
idempotent. Already for $d=3$, the situation becomes more complex,
since one can have a limiting $M$ with $\det (M) = 0$ that is not
an idempotent, by considering
\[
  \exp \begin{pmatrix} -a\! -\! b & a & b \\ c & -c\! -\! d & d \\
    e & f & -e\! - \!\nts f \end{pmatrix}
\]
for $a,b,\ldots,f \geqslant 0$. Then,
fixing $b, \ldots, f$ at generic values and letting $a\to\infty$
produces such examples, and similarly for various other choices.  When
$d=4$, one can have mixtures in block matrix form, such as
\[
  M \, = \, \begin{pmatrix} 1 & 0 \\ 1\! - \nts\nts \alpha & \alpha
    \end{pmatrix} \oplus \begin{pmatrix} \beta & 1\! - \nts\nts\beta \\
    \beta & 1\! -\nts\nts\beta \end{pmatrix}
\]
for $\alpha\in (0,1)$ and $\beta \in [\ts 0,1]$, which is singular but
not idempotent.

It seems worthwhile to characterise the boundary more completely, for
instance by relating semigroups with reducible generators to
properties of the boundary, which we leave as an open problem at this
point. It is clear though that Markov idempotents and their connection
with blockwise equal-input matrices will be important here.  More
generally, a simplified systematic treatment of the embeddability
problem for $d\leqslant 4$ would be helpful in view of the
applications in phylogeny; see \cite{CFR} for recent progress in this
direction. Finally, even some of the elementary questions become much
harder in the situation of countable states, where many new phenomena
occur; see \cite{ER} and references therein for some recent results.
\smallskip

\section*{Acknowledgements}

It is our pleasure to thank Frederic Alberti and Martin M\"{o}hle for
valuable discussions and suggestions, as well as Tanja Eisner, Agnes
Radl and Peter Taylor for helpful comments on the manuscript. 
{Further,
we thank two anonymous referees for their thoughtful comments, which
helped us to improve the presentation.} This
work was supported by the German Research Foundation (DFG), within the
CRC 1283 at Bielefeld University, and by the Australian Research
Council (ARC), via Discovery Project DP 180{\ts}102{\ts}215.
\smallskip


\begin{thebibliography}{99}
\small

\bibitem{BS}
M.~Baake and J.~Sumner,
Notes on Markov embedding,
\textit{Lin.\ Alg.\ Appl.} \textbf{594} (2020) 262--299;
\texttt{arXiv:1903.08736}.

\bibitem{CFR}
M.~Casanellas, J.~Fern\'{a}ndez-S\'{a}nchez and J.~Roca-Lacostena,
The embedding problem for Markov matrices,
\textit{preprint} (2020);
\texttt{arXiv:2005.00818}.
  
\bibitem{Culver}
W.J.~Culver,
On the existence and uniqueness of the real logarithm
of a matrix, \textit{Proc.\ Amer.\ Math.\ Soc.} 
\textbf{17} (1966) 1146--1151.

\bibitem{Daley}
D.J.~Daley,
Stochastically monotone Markov chains,
\textit{Z.\ Wahrscheinlichkeitsth.\ Verw.\ Geb.}
\textbf{10} (1968) 305--317.

\bibitem{Davies}
E.B.~Davies,
Embeddable Markov matrices,
\textit{Electronic J.\ Probab.} \textbf{15} (2010)
paper 47, 1474--1486;
\texttt{arXiv:1001.1693}.

\bibitem{ER}
T.~Eisner and A.~Radl,
Embeddability of real and positive operators,
\textit{Lin.\ Multilin.\ Alg.}, in press; \newline
\texttt{arXiv:2003:08186}.

\bibitem{Elfving}
G.~Elfving,
Zur Theorie der Markoffschen Ketten,
\textit{Acta Soc.\ Sci.\ Fennicae}
\textbf{A2} (1937) 1--17.

\bibitem{Feller}
W.~Feller,
\textit{An Introduction to Probability Theory and Its Applications.
  Vol.~II}, 2nd ed., Wiley, New York (1971).

\bibitem{Jeremy}
J.~Fern\'{a}ndez-S\'{a}nchez, J.G.~Sumner, P.D.~Jarvis and
M.D.~Woodhams, Lie Markov models with purine/pyrimidine symmetry,
\textit{J.\ Math.\ Biol.} \textbf{70} (2015) 855--891;
\texttt{arXiv:1206.1401}.

\bibitem{Gant}
F.R.~Gantmacher,
\textit{Matrizentheorie}, Springer, Berlin (1986).

\bibitem{GMMS}
F.~G\"{o}nd\H{o}cs, G.~Michaletzky, T.F.~M\'{o}ri and G.J.~Sz\'{e}kely, 
A characterisation of infinitely divisible Markov chains
  with finite state space,
\textit{Ann.\ Univ.\ Sci.\ Budap.\ R.\ E\"{o}tv\"{o}s Nom.}
\textbf{27} (1985) 137--141.

\bibitem{Guerry}
M.-A.~Guerry,
Sufficient embedding conditions for three-state discrete-time 
Markov chains with real eigenvalues,
\textit{Lin.\ Multilin.\ Alg.} \textbf{67} (2019) 106--120.

\bibitem{Higham-2}
N.J.~Higham,
\textit{Accuracy and Stability of Numerical Algorithms},
2nd ed., SIAM, Philadelphia, PA (2002).

\bibitem{Higham}
N.J.~Higham, 
\textit{Functions of Matrices: Theory and Computation},
SIAM, Philadelphia, PA (2008).

\bibitem{HL}
N.J.~Higham and L.~Lin,
On $p${\ts\ts}th roots of stochastic matrices,
\textit{Lin.\ Alg.\ Appl.} \textbf{435} (2011) 448--463.

\bibitem{HM}
G.~H\"{o}gn\"{a}s and A.~Mukherjea,
\textit{Probability Measures on Semigroups:\ Convolution
Products, Random Walks and Random Matrices},
2nd ed., Springer, New York (2011).

\bibitem{HJ}
R.A.~Horn and C.R.~Johnson,
\textit{Matrix Analysis}, 2nd ed.,
Cambridge University Press, Cambridge (2013).

\bibitem{KK}
J.~Keilson and A.~Kester,
Monotone Markov matrices and monotone Markov processes,
\textit{Stoch.\ Proc.\ Appl.} \textbf{5} (1977) 231--241.

\bibitem{Ki}
M.~Kijima,
\textit{Markov Processes for Stochastic Modeling},
Springer, Boston, MA (1997).

\bibitem{King}
J.F.C.~Kingman,
The imbedding problem for finite Markov chains,
\textit{Z.\ Wahrscheinlichkeitsth.\ Verw.\ Geb.}
\textbf{1} (1962) 14--24.

\bibitem{Lang}
S.~Lang,
\textit{Algebra},
3rd ed., Addison-Wesley, Reading, MA (1993).

\bibitem{Lind}
B.H.~Lindqvist,
Monotone and associated Markov chains, with applications
to reliability theory,
\textit{J.\ Appl.\ Prob.} \textbf{24} (1987) 679--695.

\bibitem{Luther}
U.~Luther and K.~Rost,
Matrix exponentials and inversion of confluent
Vandermonde matrices,
\textit{Electr.\ Trans.\ Num.\ Anal.\ (ETNA)}
\textbf{18} (2004) 91--100.

\bibitem{MM}
M.~Marcus and H.~Minc,
\textit{A Survey of Matrix Theory and Matrix Inequalities},
reprint, Dover, New York (1992).

\bibitem{Norris}
J.R.~Norris,
\textit{Markov Chains}, reprint,
Cambridge University Press, Cambridge (2005).

\bibitem{OEIS}
N.J.A.~Sloane,
\textit{The On-Line Encyclopedia of Integer Sequences},
\texttt{https://oeis.org}.

\bibitem{Steel}
M.~Steel,
\textit{Phylogeny{\ts\ts}---{\ts}Discrete and Random Processes
in Evolution}, SIAM, Philadelphia, PA (2016).

\bibitem{J2}
J.~Sumner,
Multiplicatively closed Markov models must form Lie algebras,
\textit{ANZIAM J.} \textbf{59} (2017) 240--246;
\texttt{arXiv:1704.01418}.

\bibitem{J1}
J.G.~Sumner, J.~Fern\'{a}ndez-S\'{a}nchez and P.D.~Jarvis,
Lie Markov models,
\textit{J.\ Theor.\ Biol.} \textbf{298} (2012) 16--31;
\texttt{arXiv:1105.4680}.

\bibitem{Web}
R.~Webster,
\textit{Convexity},
Oxford University Press, New York (1994).

\bibitem{YR}
Z.~Yang and B.~Ranala,
Molecular phylogenetics: principles and practice,
\textit{Nature Rev.\ Genetics} \textbf{13} (2012) 303--314.

\end{thebibliography}
\end{document}